\DeclareMathAlphabet{\pazocal}{OMS}{zplm}{m}{n}
\newcommand\bF{\mathbb F}
\newcommand\ZZ{\mathbb Z}
\newcommand\FF{\mathbb F}
\newcommand\Fq{\FF_q}
\newcommand\Spec{\mathrm{Spec}~}
\newcommand\cE{\mathcal{E}}
\newcommand{\ens}{ \ \vert \ }
\newcommand\p{\mathfrak p}
\newcommand\Fp{\FF_{\p}}
\newcommand\Fpm{\FF_{\p_m}}
\newcommand\q{\mathfrak q}
\newcommand\tors{\mathrm{tors}}
\newcommand\End{\operatorname{End}}
\newcommand{\phiT}{\phi^{\sim}}
\newcommand{\expT}{\exp^{\sim}}
\newcommand{\logT}{\log^{\sim}}
\newcommand{\phibar}{\overline{\vphantom{x}\smash{\phi}}}
\newcommand\M{\mathbf M}
\newcommand\Nrd{N_{\mathrm{rd}}}
\newcommand\nil{\textrm{nil}}
\newtheorem{thm}{Theorem}[section]
\newtheorem*{thm*}{Theorem}
\newtheorem{thmA}{Theorem}
\newtheorem{lem}[thm]{Lemma}
\newtheorem{prop}[thm]{Proposition}
\newtheorem{cor}[thm]{Corollary}
\theoremstyle{definition}
\newtheorem{deftn}[thm]{Definition}
\newtheorem*{deftn*}{Definition}
\newtheorem{rem}[thm]{Remark}
\newtheorem*{rem*}{Remark}
\newtheorem{ex}[thm]{Example}
\title{Wieferich primes for Drinfeld modules}
\author{Xavier Caruso, Quentin Gazda, Alexis Lucas}
\begin{document}

\maketitle

\setcounter{tocdepth}{2}

\begin{abstract}
The aim of this paper is to discuss the notion of Wieferich primes in the context of Drinfeld modules.
Our main result is a surprising connection between the proprety of a monic irreducible polynomial $\p$ to be Wieferich and the $\p$-adic valuation of special $L$-values of Drinfeld modules.
This generalizes a theorem of Thakur for the Carlitz module. 

We also study statistical distributions of Wieferich primes, proving in particular
that a place of degree $d$ is Wieferich with the expected probability $q^{-d}$ when we average over large enough sets of Drinfeld modules.
\end{abstract}

\tableofcontents

\section{Introduction}

Let $p$ be an odd prime number. It is said that $p$ is \emph{Wieferich} if the multiplicative order of $2$ is the same modulo $p$ and modulo $p^2$.
Wieferich primes were introduced around $1909$ in \cite{wieferich} by Arthur Wieferich, then a $25$ years old student at M\"unster Universit\"at, who shown that the existence of a non trivial solution to \emph{the first case at $p$ of Fermat's Last Theorem}\footnote{A triple $(x,y,z)\in \mathbb{Z}^3$ is said to satisfy \emph{the first case at $p$ of Fermat's Last Theorem} if the product $xyz$ is prime to $p$ and $x^p+y^p=z^p$.}, $p\geq 5$, would imply that $p$ is Wieferich.
The search for Wieferich primes went up to primes $<2^{64}$ yet only two were discovered: the very first one, $1093$, was found by Meissner \cite{meissner} and the second one, $3511$, by Beeger \cite{beeger}.

Replacing the number $2$ by an arbitrary number $a$, we obtain the notion of \emph{Wieferich primes in base $a$}. Despite the surprising lack of datas on these prime numbers, it is expected that the number of Wieferich primes below $N$ (with respect to a fixed base) is of order $\log(\log N)$; in particular, there should be infinitely many Wieferich primes. But it does not even seem known whether there exists at least one Wieferich prime number in each base.
We refer the reader to \cite{katz} for the state of the art on Wieferich primes.

Let now $\bF$ be a finite field with $q$ elements and let $C$ denote the \emph{Carlitz module};
it is the counterpart of the multiplicative group scheme in function field arithmetic (\emph{e.g.} see~\cite{gazda-junger} for details on this statement). 
Taking profit of the latter analogy, Thakur introduced in~\cite{thakur} the notion of \emph{Wieferich primes} in function field arithmetic.

\begin{deftn*}[Thakur]
Let $\p$ be an irreducible monic polynomial (also called \emph{prime} below). We say that $\mathfrak{p}$ is \emph{$C$-Wieferich} if the kernel of $(a\mapsto C_a(1) \bmod{\p})$ in $\bF[t]$ agrees with that of $(a\mapsto C_a(1) \bmod{\p^2})$. 
\end{deftn*}

As for the classical notion, very little is known on $C$-Wieferich primes. The problem of the infiniteness of $C$-Wieferich primes is probably as hard as the classical problem in number theory. Nonetheless, Thakur proved the following surprising connection with $p$-adic zêta functions in \emph{loc.\,cit.} 
\begin{thm*}[Thakur]\label{thm:thakur}
We assume $q > 2$. The prime $\p$ is $C$-Wieferich if and only if $\p$ divides the $\p$-adic Carlitz zêta value $\zeta_{\p}(1)$.
\end{thm*}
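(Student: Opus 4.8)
The plan is to tie both sides of the equivalence to the single quantity $C_{\p-1}(1)\in A:=\bF[t]$ and its $\p$-adic valuation $v_{\p}$. For $n\ge 1$ put $I_n:=\{a\in A:\p^n\mid C_a(1)\}$; using $C_{a+b}=C_a+C_b$ and $C_{ba}(1)=C_b(C_a(1))$ one checks that each $I_n$ is an ideal of $A$, that $I_2\subseteq I_1$, and that (by Thakur's definition) $\p$ is $C$-Wieferich exactly when $I_1=I_2$. Let $[a]_i$ denote the coefficients with $C_a(X)=\sum_i[a]_iX^{q^i}$, so $[a]_0=a$ and $[\p]_d=1$ where $d:=\deg\p$. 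Comparing coefficients of $z^{q^n}$ in the functional equation $C_{\p}(\exp_C(z))=\exp_C(\p z)$, with $\exp_C(z)=\sum_{m\ge 0}z^{q^m}/D_m$ and $D_m:=\prod_{\substack{\deg a=m\\ \mathrm{monic}}}a$, yields $\sum_{i=0}^n[\p]_i/D_{n-i}^{q^i}=\p^{q^n}/D_n$. Since $\p\nmid D_m$ for $m<d$, an easy induction on $i$ gives $\p\mid[\p]_i$ for $0\le i<d$; hence $C_{\p-1}(1)=C_{\p}(1)-1=\sum_{i=0}^{d-1}[\p]_i$ is divisible by $\p$, so $\p-1\in I_1$, and writing $\delta_i:=[\p]_i/\p\in A$ (with $\delta_0=1$) we get $C_{\p-1}(1)=\p\sum_{i=0}^{d-1}\delta_i$.

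First I would reduce the Wieferich condition to a divisibility. Write $I_1=(f)$ (recall $A$ is a PID), $\p-1=fh$ --- so $\p\nmid h$ --- and $C_f(1)=\p u$. Then $C_{\p-1}(1)=C_h(C_f(1))=C_h(\p u)=h\,\p u+\p^q(\cdots)\equiv h\,\p u\pmod{\p^2}$, whence $v_{\p}(C_{\p-1}(1))\ge 2\iff\p\mid u\iff\p^2\mid C_f(1)\iff f\in I_2\iff I_1=I_2$. Thus $\p$ is $C$-Wieferich if and only if $v_{\p}(C_{\p-1}(1))\ge 2$, i.e. if and only if $\p\mid\sum_{i=0}^{d-1}\delta_i$.

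The heart of the argument is then the congruence $\sum_{i=0}^{d-1}\delta_i\equiv\zeta_{\p}(1)\pmod{\p}$. Dividing the recursion above by $\p$ and reducing modulo $\p$ (legitimate since $D_m$ is a $\p$-unit for $m<d$) gives $\delta_n\equiv-\sum_{i=0}^{n-1}\delta_i/D_{n-i}^{q^i}\pmod{\p}$ for $1\le n<d$, with $\delta_0=1$. The power sums $S_m:=\sum_{\substack{\deg a=m\\ \mathrm{monic}}}a^{-1}$ satisfy exactly the same recursion, $\sum_{i=0}^n S_i/D_{n-i}^{q^i}=0$ for $n\ge 1$, with $S_0=1$: this is the classical Carlitz identity expressing that $\sum_m S_m z^{q^m}=\log_C(z)$ is the compositional inverse of $\exp_C(z)=\sum_m z^{q^m}/D_m$. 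Hence $\delta_i\equiv S_i\pmod{\p}$ for all $i<d$. On the other hand, writing the $\p$-adic zêta value as $\zeta_{\p}(1)=\sum_{m\ge 0}\bigl(\sum_{\substack{\deg a=m\\ \mathrm{monic},\ (a,\p)=1}}a^{-1}\bigr)$: for $m<d$ the inner sum is $S_m$, and for $m\ge d$ its reduction modulo $\p$ equals $q^{\,m-d}\sum_{x\in\Fp^{\times}}x^{-1}$ --- because the fibres of $a\mapsto a\bmod\p$ on the monic polynomials of degree $m$ prime to $\p$ all have the same size $q^{\,m-d}$, since $1,t,\dots,t^{d-1}$ is an $\bF$-basis of $\Fp$ --- which vanishes modulo $\p$: for $m>d$ because $p\mid q^{\,m-d}$, and for $m=d$ because $\sum_{x\in\Fp^{\times}}x^{-1}=\sum_{x\in\Fp^{\times}}x=0$ as $|\Fp|=q^{d}>2$. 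This last point is exactly where the hypothesis $q>2$ is used. Therefore $\zeta_{\p}(1)\equiv\sum_{m<d}S_m\equiv\sum_{i<d}\delta_i\pmod{\p}$, and combining everything: $\p$ is $C$-Wieferich $\iff v_{\p}(C_{\p-1}(1))\ge 2\iff\p\mid\sum_{i<d}\delta_i\iff\p\mid\zeta_{\p}(1)$.

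I expect the main obstacle to be this middle step --- identifying $[\p]_i/\p$ modulo $\p$ with the power sum $S_i$ --- which relies on putting the two coefficient recursions on a common footing and on the classical closed form for the coefficients of the Carlitz logarithm; morally it is the statement that $\zeta_{\p}(1)$ is the $\p$-adic regularization of $\log_C(1)=\sum_m S_m$ with the Euler factor at $\p$ deleted, and a more structural proof could instead go through a $\p$-adic Carlitz logarithm. The ideal-theoretic reduction and the equidistribution estimate are routine once set up; it is nonetheless worth double-checking on a small example --- e.g. $\p=t$ with $q>2$, where $C_{\p-1}(1)=t$ and $\zeta_{\p}(1)\equiv 1\pmod{t}$, so neither condition holds --- that no stray unit or sign has slipped into the congruence $C_{\p-1}(1)/\p\equiv\zeta_{\p}(1)\pmod{\p}$.
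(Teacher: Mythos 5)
Your proof is correct, and it takes a genuinely different --- and more elementary --- route than the one implicitly behind the paper. Note first that the paper does not reprove Thakur's theorem: it cites it from Thakur's article, and the present paper's contribution is the generalization to arbitrary Drinfeld models (Theorem~A via Theorems~\ref{thm:main} and~\ref{thm:vpLp}), which recovers the Carlitz case as follows. The Carlitz module is very small, so $u_C(T)=1$ by Proposition~\ref{prop:small}; the $T$-twisted class formula of Anglès--Ngo~Dac--Tavares~Ribeiro then gives the exact identity $\zeta_\p(1)=\p^{-1}\log_{C,\p}\!\big(C_{\p-1}(1)\big)$, and since $\log_{C,\p}$ is an isometry of $\mathfrak m_\p$ (Proposition~\ref{prop:isom-p-adic-log}) one reads off $v_\p(\zeta_\p(1))=v_\p(C_{\p-1}(1))-1=c_\p(C;1)$, which in turn is compared with the Wieferich property through Proposition~\ref{prop:clé}. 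Your argument proves only the congruence $C_{\p-1}(1)/\p\equiv\zeta_\p(1)\pmod\p$ rather than the full identity, but this suffices for the equivalence; as you yourself observe, the congruence is a mod-$\p$ shadow of the class formula. In exchange you avoid invoking Taelman's class formula and log-algebraicity altogether: everything is done by hand from the Carlitz exponential recursion and the classical closed form $S_m=(-1)^m/L_m$ for the logarithm coefficients, which is essentially Thakur's original strategy. A few remarks: your reduction of the Wieferich condition to $v_\p(C_{\p-1}(1))\ge 2$ is exactly the content of Proposition~\ref{prop:clé} in the paper; your justification of $\p\nmid h$ uses $\gcd(\p,\p-1)=1$, which is fine; the tail estimate for $\zeta_\p(1)$ implicitly uses the (standard, but nontrivial) convergence of the $\p$-adic $L$-series, and the fact that $\p A_\p$ is closed so an infinite sum of multiples of $\p$ is a multiple of $\p$; and the hypothesis you actually need for $\sum_{x\in\Fp^\times}x=0$ is $q^d>2$, which is exactly the paper's hypothesis \ref{Hyp} and is slightly weaker than $q>2$. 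So your proof not only works but naturally lands on the sharp condition. The tradeoff is that the paper's approach, being structural, yields the precise valuation $v_\p(\zeta_\p(1))=c_\p(C;1)$ and generalizes immediately beyond the Carlitz module, whereas your elementary argument gives only the equivalence and relies on Carlitz-specific closed formulas.
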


\begin{rem*}
It is worth to mention that, inspired by the function field situation, Thakur proved a similar result in classical arithmetic. Let $p\geq 5$ be a prime number. Then Thakur shows in the same paper that $p$ is \emph{Wilson}\footnote{A prime number $p$ is called \emph{Wilson} if $(p-1)!\equiv -1\pmod{p^2}$.} if and only if $p$ divides in \emph{the $p$-adic Euler-Mascheroni constant}\footnote{Recall that \emph{the $p$-adic Euler-Mascheroni constant} $\gamma_p$ is defined as the $p$-adic limit
\[
\gamma_p:=\lim_{\varepsilon\to 0} \left(\zeta_p(1+\varepsilon)-\frac{1-1/p}{\varepsilon}\right)
\]
where $\zeta_p$ denotes the $p$-adic zêta function.} $\gamma_p\in \mathbb{Z}_p$.
The relation with Theorem \ref{thm:thakur} is the following: his theory of function fields $\Gamma$-values enables Thakur to determine the counterpart of Wilson primes in function fields arithmetic--called $C$-Wilson primes--and then prove that a prime $\p$ is $C$-Wilson if, and only if, it is $C$-Wieferich. 
\end{rem*}

\paragraph{Objectives and results.}

The Carlitz module is the simplest example of a Drinfeld module. If the Carlitz module is the analogue to the multiplicative group scheme in classical arithmetic, Drinfeld modules are the counterparts of elliptic curves (again, we refer the reader to \cite{gazda-junger} for the relevant statements). The goal of this paper is to generalize Thakur's theorem to some more general Drinfeld modules. 

Let $\phi:\FF[t]\to \FF[t]\{\tau\}$, $a \mapsto \phi_a$ be a (model of a) Drinfeld module over $\FF[t]$ (we refer to Subsection \ref{sec:background} and Appendix~\ref{sec:models} for the definitions).
We begin by generalizing Thakur's definition.

\begin{deftn*}[see Definition~\ref{def:wieferich}]
For an ideal $I$ of $\FF[t]$, we let $\pi(\phi;I)$ be the annihilator of~$1$ modulo $I$ for the $\FF[t]$-action of $\phi$.
We say that a prime $\p\in \FF[t]$ is \emph{$\phi$-Wieferich} if $\pi(\phi;\p)=\pi(\phi;\p^2)$.

More generally, we denote by $c_{\p}(\phi)$ the maximal integer $c$ for which $\pi(\phi;\p)=\pi(\phi;\p^{c+1})$. 
\end{deftn*}

If $\phi=C$ is chosen to be the Carlitz module over $\FF[t]$, we recover Thakur's definition of $C$-Wieferich primes.
Clearly, $c_{\p}(\phi)$ is positive if and only if $\p$ is $\phi$-Wieferich. However, the datum of $c_{\p}(\phi)$ is more precise; in some sense, it measures the ``order at which $\p$ is Wieferich''. 

On the other hand, to any Drinfeld module $\phi$ as above, one may also associate a special $L$-value $L(\phi;1)$ converging in $\FF[\![t^{-1}]\!]$. When $\phi$ is the Carlitz module, $L(C,1)$ corresponds to the Carlitz zeta value at $1$. 
A $\p$-adic variant also exists: it leads to the definition of the $\p$-adic special $L$-value, denoted by $L^*_{\p}(\phi;1)$, converging in the $\p$-completion of $K$; see Subsection \ref{sec:background} below for details.

In \cite{caruso-gazda}, the first and the second author established an algorithm based on Anderson's trace formula to compute $L^*_{\p}(\phi;1)$ efficiently. Computations in mass using this algorithm suggested an equality between the $\p$-adic valuation of $L^*_{\p}(\phi;1)$ and $c_\p(\phi)$ in some cases, but not all. The results of the numerous calculations helped to pin down the precise condition on $\phi$ for when this shall happen.

\begin{deftn*}[see Defintion~\ref{def:small}]
Write $\phi_t=t+g_1\tau+\ldots+g_r\tau^r$.
We say that $\phi$ is \emph{small} (resp. \emph{very small}) if $\deg_{t} g_i\leq q^i$ (resp. $\deg_{t} g_i < q^i$) for all $i\in \{1,\ldots,r\}$.
\end{deftn*}

In the present article, we confirm the hypothesis that emerged after our campaign of computations, proving the following theorem.

\begin{thmA}
\label{thm:smallWieferich}
Let $\p$ be a prime of $\FF[t]$. If $q = 2$, we further assume that $\deg \p > 1$. 
\begin{enumerate}[label=(\theenumi)]
\item If $\phi$ is very small, then $v_\p\big(L_\p(\phi; 1)\big) = c_\p(\phi)$.\\
In particular, in this case, $\p$ is $\phi$-Wieferich if and only if $\p$ divides $L_\p(\phi; 1)$.
\item If $\phi$ is small and $1$ is not a torsion point, then
$v_\p\big(L_\p^*(\phi; 1)\big) = c_\p(\phi)$.\\
In particular, in this case, $\p$ is $\phi$-Wieferich if and only if $\p$ divides $L_\p^*(\phi; 1)$.
\end{enumerate}
\end{thmA}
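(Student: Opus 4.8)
The plan is to prove that $v_\p(L_\p(\phi;1))$ and $v_\p(L_\p^*(\phi;1))$ compute $c_\p(\phi)$ by relating both quantities to the same $\p$-adic module-theoretic object, namely the $\p$-adic logarithm of the point $1$ attached to $\phi$. First I would recall the $\p$-adic exponential/logarithm machinery: for a Drinfeld module $\phi$ one has a $\p$-adic logarithm $\log_\phi$ converging on the $\p$-adic disk of the relevant radius, and Anderson's trace formula (as exploited in \cite{caruso-gazda}) expresses $L_\p(\phi;1)$ (resp. $L_\p^*(\phi;1)$) in terms of $\log_\phi(1)$ — up to an Euler factor at $\p$ in the non-starred case, which is precisely why the hypothesis ``$\phi$ very small'' (resp. ``small and $1$ not torsion'') is needed to control the valuation of that Euler factor and of the logarithm's leading coefficient. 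The ``small''/``very small'' condition bounds $\deg_t g_i \le q^i$, which is exactly the threshold guaranteeing that the Newton polygon of $\phi_{\p}$ behaves uniformly and that $\log_\phi$ does not contract the disk: this is the input that makes the comparison valuation-exact rather than merely an inequality.

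Next I would make the link between $c_\p(\phi)$ and $\log_\phi(1)$ explicit. By definition $\pi(\phi;\p^{c+1})$ is the annihilator of $1$ in $\FF[t]/\p^{c+1}$ under the $\phi$-action, so $c_\p(\phi) \ge c$ iff the submodule generated by $1$ in $(\FF[t]/\p^{c+1})$-coordinates ``stays as large'' as modulo $\p$ — equivalently, iff $\phi_a(1) \equiv 0 \pmod{\p^{c+1}}$ for $a = \pi(\phi;\p)$, a generator of the mod-$\p$ annihilator. The strategy is to show that for such $a$ one has $v_\p(\phi_a(1)) = v_\p(L_\p(\phi;1)) + $ (something independent of $a$, in fact $v_\p(\phi_a'(0))$ or the degree of $\p$-torsion contribution), so that the maximal $c$ with $\phi_a(1)\equiv 0 \pmod{\p^{c+1}}$ is read off directly from $v_\p(L_\p(\phi;1))$. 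Concretely I would pass through the commutative diagram relating $\log_\phi$, the $\phi$-action and multiplication by $a$ on the additive group, reduce mod powers of $\p$, and track valuations using the smallness hypothesis to control the $\tau$-higher-order terms. Case (1) should follow cleanly since ``very small'' forces $1$ to behave like a non-torsion point with the cleanest Euler factor; case (2) requires the extra hypothesis that $1$ is not torsion precisely to avoid the degenerate situation where $L_\p^*$ picks up an extra trivial zero.

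The main obstacle I anticipate is the precise bookkeeping of valuations in the non-starred versus starred $L$-value, i.e.\ correctly identifying and bounding the Euler factor at $\p$ and showing the smallness hypothesis forces its valuation to vanish (for (1)) or showing the ``$1$ not torsion'' hypothesis exactly compensates in (2). A second, more technical obstacle is handling the excluded small cases: the hypothesis ``$q=2 \Rightarrow \deg\p>1$'' strongly suggests there is a genuine exceptional phenomenon for the prime $\p = t$ (or $t+1$) over $\FF_2$, where the logarithm's radius of convergence degenerates; I would need to verify that for $\deg\p > 1$ (or $q>2$) the point $1$ always lies strictly inside the disk of convergence of $\log_\phi$ mod $\p$, which is the quantitative heart of the argument and likely where the smallness bound $q^i$ is used most sharply. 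Finally, I would reconcile the two cases with Thakur's original theorem by specializing $\phi = C$: the Carlitz module is very small, $\deg_t g_1 = 0 < q$, recovering Theorem~\ref{thm:thakur} with $L_\p = L_\p^*$ since $1$ is not Carlitz-torsion.
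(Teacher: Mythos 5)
Your proposal correctly identifies the two main ingredients (the $\p$-adic class formula and the fact that $c_\p(\phi;x)$ can be read from $v_\p(\phi_a(x))$ for $a$ a generator of the annihilator of $x$ mod $\p$), and you are right that the excluded case $q=2$, $\deg\p=1$ comes from the domain of $\p$-adic isometry of $\log_{\phi,\p}$. However, there is a central gap: you never introduce the Taelman unit $u_\phi(T) := \expT_\phi(L(\phi;T))$, and you misattribute the role of the smallness hypothesis.

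The class formula does not say $L_\p(\phi;1)$ is the logarithm of $1$; it says
\[
L_\p(\phi;1) \;=\; \p^{-1}\cdot \log_{\phi,\p}\bigl(\phi_{|\phi(\FF_\p)|}(u_\phi(1))\bigr),
\]
and combining this with the isometry of $\log_{\phi,\p}$ on $\mathfrak m_\p$ (which holds under $(H)_\p$ alone, \emph{with no smallness assumption}) yields $v_\p(L_\p(\phi;1)) = c_\p\bigl(\phi;u_\phi(1)\bigr)$ — the base point being the Taelman unit, not $1$. The smallness hypothesis is not about the $\p$-adic Newton polygon or the radius of convergence of $\log_\phi$ at all: its entire job is to control the Taelman unit at the \emph{infinite} place. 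Indeed the $\infty$-adic estimate $\deg_t g_i \leq q^i$ propagates through the recursion for the exponential coefficients $e_n$ to force $v_\infty(e_n)\geq 0$, from which one deduces $u_\phi(T) \in \FF[T]$ when $\phi$ is small, and $u_\phi(T)=1$ when $\phi$ is very small. The substitution of $u_\phi(1)$ by $1$ in $c_\p$ is then legitimate because $c_\p(\phi;\cdot)$ depends only on the $\FF^\times$-orbit of the base point. Without isolating $u_\phi$ and analysing it $\infty$-adically, your argument cannot explain why the theorem would fail for a non-small Drinfeld model (it genuinely can: the correct base point is then $u_\phi(1)\neq 1$). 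For part (2), the no-torsion hypothesis is needed — as you guessed, but for a different mechanism — to ensure that $\log_{\phi,\p}(\phi_{|\phi(\FF_\p)|}(u^*_\phi(1)))\neq 0$, so the vanishing orders of $u_\phi(T)$ and of $L_\p(\phi;T)$ at $T=1$ agree; and the smallness hypothesis again guarantees $u_\phi^*(1)\in\FF^\times$.
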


\begin{rem*}
We will actually prove a more general version of Theorem~\ref{thm:smallWieferich} without the smallness assumption (see Theorems~\ref{thm:main} and~\ref{thm:vpLp});
in this case, however, the base-point~$1$ considered when defining $c_\p(\phi)$ has to be replaced by another value (a certain Taelman unit).
\end{rem*}

\begin{rem*}
We--the authors--are not aware of a similar statement in classical arithmetic involving, \emph{e.g.} models of elliptic curves. 
\end{rem*}

The proof of Theorem \ref{thm:smallWieferich} relies on a twisted log-algebraicity result of Anglès--Ngo Dac--Tavares Ribeiro \cite{ANT}.
As a byproduct of our work, we also prove Theorem~\ref{thm:order} below which provides a positive answer to an analogue of the main conjecture of~\cite{caruso-gazda} in the framework of Drinfeld modules
(we refer to Appendix~\ref{app:anderson} for a comparison between the language of~\cite{caruso-gazda} and that of the present paper).

\begin{thmA}
\label{thm:order}
Let $\phi : \mathbb F[t] \to \mathbb F[t]\{\tau\}$ be a Drinfeld module.
The order of vanishing at $T=1$ of the $L$-series $L_\p(\phi; T)$ is independent of $\p$.
\end{thmA}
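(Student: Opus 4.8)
The plan is to connect the order of vanishing of $L_\p(\phi;T)$ at $T=1$ with a quantity attached to $\phi$ that manifestly does not depend on $\p$ — namely, the class-module/unit-module data appearing in the Taelman-type class formula. Concretely, for each prime $\p$ the $\p$-adic $L$-series $L_\p(\phi;T)$ (or its variant $L^*_\p$) should factor, via an Anderson trace formula as in \cite{caruso-gazda}, as the characteristic polynomial-type quantity of a Frobenius operator acting on a space of ``nuclear'' functions; its order of vanishing at $T=1$ measures the dimension of a certain eigenspace. First I would recall, from the apparatus set up in Subsection~\ref{sec:background} and Appendix~\ref{app:anderson}, the precise description of $L_\p(\phi;T)$ as such a regularized determinant, so that ``$\operatorname{ord}_{T=1} L_\p(\phi;T)$'' becomes the multiplicity of $1$ as an eigenvalue of an explicit operator $\tau$-semilinear operator $N_\p$.

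Next I would invoke the twisted log-algebraicity result of Anglès--Ngo Dac--Tavares Ribeiro \cite{ANT}, exactly as it is used in the proof of Theorem~\ref{thm:smallWieferich}. The point is that log-algebraicity provides, globally (i.e.\ before completing at any $\p$), an identity relating the special $L$-value to an algebraic ``Taelman unit'' living in $\FF[t]$ (or a finite extension), together with a description of the module of such units. I would then argue that the order of vanishing at $T=1$ equals the rank of this global unit module — for instance the $\FF[t]$-rank of $U(\phi)=\{u : \exp_\phi(u)\in \FF[t]\}/\,\text{(lattice)}$, or whatever the precise Taelman module is in this setting — which is patently a global invariant of $\phi$, involving no choice of $\p$. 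The reduction-mod-$\p$ compatibility (good reduction at all but finitely many $\p$, and the behaviour at the bad ones being controlled by the same log-algebraic identity) is what forces the $\p$-local orders of vanishing to all coincide with this single global number.

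In more detail, the key steps, in order, are: (i) express $L_\p(\phi;T)$ via Anderson's trace formula as a Fredholm-type determinant $\det(1 - T\cdot\Frob \mid \mathcal H_\p)$ for a suitable $\p$-adic space $\mathcal H_\p$; (ii) identify $\operatorname{ord}_{T=1}$ with $\dim_{\FF_\p}\ker(\Frob - 1)$ on the associated graded/\,cokernel, i.e.\ with the rank of a ``unit'' submodule; (iii) use log-algebraicity \cite{ANT} to exhibit an explicit generating system of this unit module coming from global special polynomials (the twisted log-algebraic power series), independent of $\p$; (iv) check that reducing these global generators mod $\p$ (and $\p^2$, $\p^n$) yields a spanning set of the $\p$-local unit module for every $\p$ — here one uses that the log-algebraic identity holds integrally over $\FF[t]$, so it specializes at every prime; (v) conclude that the rank is the same for all $\p$, hence so is the order of vanishing.

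The main obstacle I anticipate is step (iv): controlling the bad primes. At primes of good reduction the comparison between the global unit module and its $\p$-completion is formal, but at the finitely many primes dividing the discriminant of $\phi$ (or the leading coefficients $g_i$) one must ensure that the log-algebraicity identity still pins down the local order of vanishing, rather than merely bounding it. I expect this to require a careful integral (as opposed to generic) version of the \cite{ANT} statement — precisely the same integrality that is exploited in the proof of Theorem~\ref{thm:smallWieferich} to handle the ``small'' versus ``very small'' dichotomy — together with an argument that the possible discrepancy at a bad $\p$ is absorbed by the regularization term $L^*_\p$ versus $L_\p$, so that the order of vanishing proper is unchanged. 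Making that last point uniform across all $\p$, good and bad, is the crux of the proof.
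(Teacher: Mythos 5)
Your high-level plan — locate a global quantity attached to $\phi$ whose order of vanishing at $T=1$ is manifestly independent of $\p$, and show it controls $\operatorname{ord}_{T=1}L_\p$ — is exactly the right idea, but you have not identified the right invariant, and the machinery you reach for (Fredholm determinants, nuclear operators, an Anderson trace formula, a rank computation on a unit module) is far heavier than what the argument actually needs. The paper's global invariant is simply $u_\phi(T) := \expT_\phi(L(\phi;T)) \in A[T]$, a polynomial, and the claim is that $\operatorname{ord}_{T=1}L_\p(\phi;T) = \operatorname{ord}_{T=1}u_\phi(T)$. This falls out almost immediately from the $T$-twisted class formula \eqref{classformulaT}: writing $u_\phi(T)=(T-1)^k u_\phi^*(T)$ with $u_\phi^*(1)\neq 0$, one gets
\[
L_\p(\phi;T)=(T-1)^k\cdot \p^{-1}\cdot\logT_{\phi,\p}\bigl(\phiT_{|\phiT(\FF_\p)|}(u_\phi^*(T))\bigr),
\]
and the remaining factor does not vanish at $T=1$ \emph{provided} $u_\phi^*(1)$ is not a $\phi$-torsion point, by Lemma~\ref{lem:torsion}. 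There is no spectral or trace-formula interpretation to set up, no ranks to compute, and no local-global comparison at bad primes.

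The real gap in your proposal is precisely what you flag as your ``main obstacle'': you have no mechanism to handle the exceptional case. In the paper that exceptional case is not ``bad reduction'' — it is the possibility that $A$ carries $\phi$-torsion, which would make $u_\phi^*(1)$ a torsion point and invalidate the application of Lemma~\ref{lem:torsion}. The paper disposes of it with a short twisting trick: replace $\phi$ by $h^{-1}\phi h$ for $h\in A$ of degree $>\max_i\deg g_i$. One checks that after this twist the leading $\tau$-coefficient of each $\psi_a$ has strictly larger $t$-degree than the others, so $\phi(A)$ cannot have $a$-torsion for any $a$, and $A$ has no $\psi$-torsion. Theorem~\ref{thm:main} then applies to $\psi=h^{-1}\phi h$, and Equation~\eqref{eq:Lptwist} relates $L_\p(\phi;T)$ to $L_\p(\psi;T)$ by a finite product of factors $\q/|\phiT(\FF_\q)|$, which are nonzero at $T=1$, so the orders of vanishing agree. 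Your proposal would need to supply a uniform integral version of log-algebraicity controlling the order of vanishing at \emph{every} prime; the twisting trick sidesteps all of this, and without something comparable your argument does not close.
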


We underline that Theorem~\ref{thm:order} holds without any smallness assumption.

Finally, in Section \ref{sec:statistics}, with the perspective of investigating the infiniteness of Wieferich primes, we study their repartition from a probabilistic point of view.
Indeed, the classical heuristic says that a prime $\p$ should be $\phi$-Wieferich with probability $q^{-\deg \p}$, supporting eventually the hypothesis that $\phi$ admits infinitely many Wieferich primes.
Our main result is a full justification of the above heuristic when we average over all small Drinfeld modules of sufficiently large rank.

\begin{thmA}
\label{thm:proba}
Let $r$ and $d$ be two positive integers.
\begin{enumerate}[label=(\theenumi)]
\item 
We assume $r \geq d + \log_q(2d)$.
Then, for any fixed place $\p$ of degree $d$, the probability that $\p$ is $\phi$-Wieferich is $q^{-d}$ when $\phi$ varies among small Drinfeld modules of rank at most $r$.
\item
We assume $r \geq 2d$.
Then, the above events are mutually independent when $\p$ varies among all places of degree $d$.
\end{enumerate}
\end{thmA}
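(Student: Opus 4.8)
The plan is to bypass $L$-values entirely and work with an elementary reformulation of the Wieferich condition, then prove the required equidistribution by linearising in the $\p$-adic digits of the coefficients of $\phi$ and using the non-vanishing of a Moore determinant. First I would record that, for every $\phi$ and every $\p$, the reduction $\FF[t]/\p^{2}\twoheadrightarrow\FF[t]/\p$ is $\phi$-equivariant, so $\pi(\phi;\p)$ always divides $\pi(\phi;\p^{2})$ and $\p$ is $\phi$-Wieferich exactly when $\phi_{\pi(\phi;\p)}(1)\equiv 0\pmod{\p^{2}}$. Since $\phi_{\pi(\phi;\p)}(1)\equiv 0\bmod\p$ by definition of $\pi(\phi;\p)$, its class modulo $\p^{2}$ lies in $\p/\p^{2}\cong\FF[t]/\p$; calling $w_{\p}(\phi)\in\FF[t]/\p$ the corresponding element, the Wieferich property becomes the single equation $w_{\p}(\phi)=0$. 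Since $\pi(\phi;\p)$ and the elements $u_{m}:=\phi_{t^{m}}(1)\bmod\p$ depend only on $\phi\bmod\p$, I would then fix $\phi\bmod\p$ and study how $w_{\p}(\phi)$ depends on the second $\p$-adic digits of $g_{1},\dots,g_{r}$.

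The engine is a linearisation. Writing $g_{j}=g_{j}^{(0)}+\p g_{j}^{(1)}+\cdots$, one has $\phi_{t}\equiv\phi_{t}^{(0)}+\p\sum_{j}g_{j}^{(1)}\tau^{j}\pmod{\p^{2}}$, where $\phi_{t}^{(0)}$ uses the reductions $g_{j}\bmod\p$; as the perturbation is a multiple of $\p$, its square vanishes modulo $\p^{2}$, so expanding $\pi(\phi;\p)(\phi_{t}^{(0)}+\p\sum_{j}g_{j}^{(1)}\tau^{j})$, applying it to $1$ and dividing by $\p$ exhibits $w_{\p}(\phi)$ as an $\FF_{q}$-affine function $w_{\p}(\phi)=w_{0}+\sum_{j}g_{j}^{(1)}\gamma_{j}$; a short computation gives $\gamma_{j}=\sum_{m=0}^{n-1}\delta_{m}\,u_{m}^{q^{j}}$, where $n:=\deg\pi(\phi;\p)$ and the $\delta_{m}\in\FF[t]/\p$ are explicit in the coefficients of $\pi(\phi;\p)$, with $\delta_{n-1}=1$ because $\pi(\phi;\p)$ is monic. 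Now $(\gamma_{j})_{j\ge 0}$ is $d$-periodic, each $u_{m}$ lying in $\FF[t]/\p\cong\FF_{q^{d}}$; and $u_{0},\dots,u_{n-1}$, being an $\FF_{q}$-basis of the $\phi$-submodule of $\FF[t]/\p$ generated by $1$, are $\FF_{q}$-linearly independent, so the Moore matrix $(u_{m}^{q^{j}})_{0\le j,m\le n-1}$ is invertible and $(\gamma_{0},\dots,\gamma_{n-1})\neq 0$ — otherwise $\delta_{n-1}=1$ would vanish. Hence $\gamma_{j}\neq 0$ for every $j$ in some residue class modulo $d$.

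For part~(1), the smallness hypothesis $\deg_{t}g_{j}\le q^{j}$ lets the digit $g_{j}^{(1)}$ range freely over $\FF[t]/\p$ as soon as $q^{j}\ge 2d-1$, and $r\ge d+\log_{q}(2d)$ guarantees that among the $j\le r$ with $q^{j}\ge 2d-1$ one lies in the residue class where $\gamma_{j}\neq 0$; freezing every other coefficient and letting that $g_{j}$ vary, $g_{j}^{(1)}\mapsto w_{\p}(\phi)=(\text{const})+g_{j}^{(1)}\gamma_{j}$ is a bijection of $\FF[t]/\p$, so exactly a $q^{-d}$ fraction give $w_{\p}(\phi)=0$; summing over the fibres of $\phi\mapsto\phi\bmod\p$ yields part~(1). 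For part~(2) I would run the same analysis for all places of degree $d$ simultaneously: fixing $\phi$ modulo $\prod_{\deg\p=d}\p$ pins down $\pi(\phi;\p)$, the $u_{m}$'s and the $\gamma_{j}$'s for all such $\p$, and since $\deg\prod_{\deg\p=d}\p\le q^{d}$, for $j\ge d+1$ the digits $g_{j}^{(1,\p)}$ of a single polynomial $g_{j}$ of degree $\le q^{j}$ are jointly uniform over these $\p$ by the Chinese remainder theorem; as the interval $[d+1,2d]\subseteq[1,r]$ (using $r\ge 2d$) is a complete residue system modulo $d$, for each $\p$ there is $j\in[d+1,2d]$ with $\gamma_{j}^{(\p)}\neq 0$, and letting $g_{d+1},\dots,g_{2d}$ vary makes $\phi\mapsto\big(w_{\p}(\phi)\big)_{\deg\p=d}$ affine with linear part surjective onto $\prod_{\deg\p=d}\FF[t]/\p$, i.e.\ the joint distribution is uniform, which is the asserted mutual independence.

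The conceptual core — affineness in the $\p$-adic digits and the Moore-determinant non-vanishing of the $\gamma_{j}$ — is, I expect, robust; the main obstacle should be the bookkeeping needed to get \emph{exact} rather than asymptotic probabilities. One must check that, for each fixed $\phi\bmod\p$, the remaining coefficients still surject in a perfectly balanced way onto the relevant finite quotients under the sole smallness constraint; one must accommodate the degenerate module $\phi_{t}=t$ and the sparse locus where $1$ is a torsion point — where $\p$ is automatically Wieferich — without disturbing the count; and one must verify the elementary but fiddly inequalities relating $r$, $d$, $\log_{q}(2d)$ and $\deg\prod_{\deg\p=d}\p$, which is exactly where the two hypotheses on $r$ are consumed.
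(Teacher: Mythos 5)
Your proposal is correct and its overall architecture coincides with the paper's: you condition on the reduction of $\phi$ modulo $\p$, observe that the Wieferich condition is the vanishing modulo $\p^2$ of $\phi_a(1)$ for $a$ the monic generator of $\pi_1(\phi;\p)$, linearise this condition in the next $\p$-adic digits of the coefficients (your expansion of $\phi_a=\sum_i a_i(\phi_t^{(0)}+\p f)^i$, using $\tau\p\equiv 0\pmod{\p^2}$, is exactly Lemma~\ref{lem:proba:differential} and Proposition~\ref{prop:proba:characterization}), and then prove surjectivity of the resulting $\Fp$-valued affine map; the degree bookkeeping ($q^j\geq 2d-1$ for a single digit to sweep $\Fp$, CRT on each coefficient for joint uniformity across places, and the windows of length $d$ consuming the hypotheses on $r$) is the same as in the paper. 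The one genuinely different ingredient is your proof of the key non-vanishing statement. The paper's Lemma~\ref{lem:proba:nonzero} shows that $L_r(\tau^j),\ldots,L_r(\tau^{j+d-1})$ cannot all vanish by a fairly intricate induction on the auxiliary sums $\xi_{n,s}$ and the identity $\xi_{n,j}^q-\xi_{n,j}=\xi_{n+1,j-1}(\xi^{q^n}-\xi)$, eventually forcing $a_1=\cdots=a_d=0$. You instead write $\gamma_j=\sum_{m=0}^{n-1}\delta_m u_m^{q^j}$ with $\delta_{n-1}=1$ (which agrees with the paper's coefficients after the reindexing $m=i-l-1$, $\delta_m=\sum_{i\geq m+1}a_i\xi^{i-1-m}$), note that $u_0,\ldots,u_{n-1}$ are $\FF$-linearly independent since they form a basis of the cyclic submodule $A\cdot 1\cong A/(a)$ of $\phi(\Fp)$, and conclude from the invertibility of the Moore matrix $(u_m^{q^j})$ that $(\gamma_0,\ldots,\gamma_{n-1})\neq 0$; $d$-periodicity of $j\mapsto\gamma_j$ then gives non-vanishing in every window of $d$ consecutive indices. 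This is shorter and more conceptual than the paper's induction, and even slightly sharper (a nonzero $\gamma_j$ already occurs among $j=0,\ldots,n-1$). Your treatment of part~(2), taking $j$ in the complete residue system $[d+1,2d]$ modulo $d$ and using that $q^{d+1}\geq 2q^d-1\geq 2\deg\prod_{\deg\p=d}\p-1$, also delivers the bound $r\geq 2d$ directly, whereas the paper deduces it from the general Theorem~\ref{th:proba:independence}. The residual caveats you flag (the excluded rank-zero point of the product space, exact balancedness of the fibres) are genuinely minor and are handled, or silently absorbed, in the paper exactly as you anticipate.
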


We mention nonetheless that Theorem~\ref{thm:proba} does not have any concrete implication on the infiniteness of Wieferich primes for a \emph{fixed} Drinfeld module;
it actually even cannot ensure the existence of a single Drinfeld module admitting an infinite number of Wieferich places.
We conclude our investigations by numerical experiments from which, unfortunately, no clear conclusion emerges.

\medskip

All along this text, we keep a limited setting: our coefficient ring is $\FF[t]$, our base field is $\FF(t)$ and we constrain ourselves to Drinfeld modules, \emph{i.e.} dimension one. We plan to pursue our investigations beyond this restricted setting in a future work. 

\paragraph{Notations.}

We fix a finite field $\mathbb F$ and denote by $p$ and $q$ its characteristic and cardinality respectively.
We set $A := \mathbb F[t]$ and $K := \mathbb F(t)$.

Throughout the article, we will often slightly abuse notation and use the same letter to denote an ideal of $A$ and its monic generator.
We hope that this will not cause confusion.

If $\p$ is a place of $A$, that is a monic irreducible polynomial, we let $v_\p$ denote the associated $\p$-adic valuation.
We write $A_\p$ (resp. $K_\p$) for the completion of $A$ (resp. $K$) for the $\p$-adic topology.
We recall that $A_\p$ is a local ring and we denote by $\mathfrak m_\p$ its maximal ideal.

For a (commutative, unitary) $\FF$-algebra $R$, we denote by $R\{\tau\}$ the noncommutative $R$-algebra of \emph{twisted polynomials}, \emph{i.e.} polynomials in $\tau$ with coefficients in $R$ subject to the twisted multiplication rule $\tau c = c^q \tau$. Note that $R\{\tau\}$ acts on $R$ via $\tau\cdot c:=c^q$.

We also let $R\{\!\{\tau\}\!\}$ be the ring of twisted power series, where elements are given by infinite power series in $\tau$, subject to the same commutation relation.

\paragraph{Acknowledgments.} We wish to express our gratitude to Bruno Anglès and Floric Tavares Ribeiro for enlightening discussions and several invitations to the LMNO. We are also grateful to the ANR PadLefan for traveling support at several stages of the project. The second and third authors are also indept to the Journal of Number Theory for funding support to the AMS-UMI Meeting in Palermo where some of the results of the paper were presented.

\section{Wieferich primes and $L$-series}

This section is devoted to introduce Wieferich type properties in the framework of Drinfeld modules and study their relationships with the theory of $L$-series.

\subsection{Review on Drinfeld modules and their $L$-series}\label{sec:background}

In this preliminary subsection, we introduce the main objects we shall work with throughout this article.
In particular, we recall Anglès, Ngo Dac and Tavares--Ribeiro's construction of $T$-twisted Drinfeld modules and their applications to the calculations of $L$-series,
as this theory will appear as a crucial ingredient in our proofs.

\subsubsection{Drinfeld models}

We start by giving the definition of Drinfeld modules we will use.
Since we are working over $A$, which is not a field, we prefer using the word ``model'' instead of ``module'';
we refer to Appendix~\ref{sec:models} for a discussion about this choice and a comparison with other more geometric definitions that can be found in the literature.

\begin{deftn}
Let $r$ be a positive integer. 
A \emph{model of a Drinfeld $A$-module} of rank $r$ over $A$, or simply \emph{Drinfeld model} for short, is an $\FF$-algebra homomorphism $\phi:A\to A\{\tau\}$, $a \mapsto \phi_a$ such that $\phi_t$, as a polynomial in $\tau$, has degree $r$ and constant term $t$. 
\end{deftn}

If $R$ is an $A$-algebra, we denote by $\phi(R)$ the $A$-module which, as an $\FF$-vector space is just $R$, and where $A$ acts through $\phi$. When $R$ is finite, $\phi(R)$ is a finite $A$-module and we let $|\phi(R)|$ denotes (the monic generator of) its Fitting ideal. 

Given a Drinfeld model $\phi$, there is a unique noncommutative formal power series
\begin{equation}\label{eq:exponential}
\exp_{\phi}:=e_0+e_1\tau+e_2\tau^2+\cdots \in \FF(t)\{\!\{\tau\}\!\}
\end{equation}
satisfying $e_0=1$ and $\phi_t\cdot \exp_\phi=\exp_\phi \cdot\,t$ (\emph{cf}. \cite[\S 4.2]{goss}). We call $\exp_\phi$ the \emph{exponential} of~$\phi$. Dually, there exists a unique noncommutative formal power series
\[
\log_\phi:=\ell_0+\ell_1\tau+\ell_2\tau^2+\cdots \in \FF(t)\{\!\{\tau\}\!\}
\]
satisfying $\ell_0=1$ and $\log_\phi\cdot\,\phi_t=t\cdot \log_\phi$. We have $\exp_\phi \cdot \log_\phi=\log_\phi\cdot \exp_\phi=1$, which justifies to call $\log_\phi$ the \emph{logarithm} of $\phi$.

The logarithm has also a $\p$-adic incarnation.
Before explaining it, we recall that $A_\p$ and $K_\p$ denote the $\p$-adic completions of $A$ and $K$ respectively,
and that $\mathfrak m_p$ is the maximal ideal of $A_\p$.
In addition, we make the following hypothesis:
\begin{enumerate}[leftmargin=10ex,label=$(H)_{\p}$]
\item\label{Hyp}: \, if $q=2$, then $\deg \p>1$.
\end{enumerate}
The following proposition is proved in \cite[Subsection~3.4]{AT} (see in particular Lemma~3.20).

\begin{prop}\label{prop:isom-p-adic-log}
We assume \ref{Hyp}.
Then, the formal series $\log_\phi$ converges over $\mathfrak{m}_\p$ and defines a bijective isometry of $\mathfrak{m}_\p$.
\end{prop}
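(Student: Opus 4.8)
The plan is to analyze the valuations of the coefficients $\ell_i$ of $\log_\phi$ and show that, under hypothesis \ref{Hyp}, the $\p$-adic valuation of $\ell_i \cdot x$ grows fast enough (as $i \to \infty$) so that the series $\log_\phi(x) = \sum_i \ell_i x^{q^i}$ converges for all $x \in \mathfrak m_\p$, and that moreover the linear term dominates, forcing the map to be an isometry. First I would recall the recursion defining the $\ell_i$: from $\log_\phi \cdot \phi_t = t \cdot \log_\phi$, writing $\phi_t = t + g_1 \tau + \dots + g_r \tau^r$, one gets $(t^{q^i} - t)\,\ell_i = \sum_{j=1}^{\min(i,r)} \ell_{i-j}\, g_j^{q^{i-j}}$. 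The key point is to extract from this recursion a lower bound on $v_\p(\ell_i)$. Since $g_j \in A = \FF[t]$, its $\p$-adic valuation is $0$ unless $\p \mid g_j$; the only source of negative valuation is the factor $(t^{q^i}-t)^{-1}$. One computes $v_\p(t^{q^i}-t)$: it is positive precisely when $\deg \p \mid i$ (because $t^{q^i} \equiv t \pmod \p$ iff $\FF_{\p} \subseteq \FF_{q^i}$ iff $\deg\p \mid i$), in which case it equals the multiplicity of $\p$ in $t^{q^i}-t$, which is $1$ when $\deg\p \nmid$ certain... more precisely $v_\p(t^{q^i}-t) = 1$ whenever $\deg\p \mid i$ and $p \nmid (i/\deg\p)$, and can be larger otherwise. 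I would package this as: there is a constant such that the "loss" at each step is controlled, and unravelling the recursion gives $v_\p(\ell_i) \geq -\lfloor i/\deg\p \rfloor \cdot (\text{bounded quantity})$, so that $v_\p(\ell_i) + q^i \cdot v_\p(x) \to +\infty$ for any $x$ with $v_\p(x) \geq 1$. This is exactly where \ref{Hyp} enters: when $q = 2$ and $\deg\p = 1$, the linear denominators $(t^2 - t)$ at every step accumulate too fast and convergence fails (or at least isometry fails); requiring $\deg\p > 1$ in that case spreads out the denominators enough.

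Next, granting convergence, I would show $\log_\phi$ maps $\mathfrak m_\p$ into $\mathfrak m_\p$: this follows because $\ell_0 = 1$ gives the leading term $x$, which lies in $\mathfrak m_\p$, and the tail $\sum_{i \geq 1} \ell_i x^{q^i}$ has strictly larger valuation than $x$ by the estimate above (here the case analysis must confirm $v_\p(\ell_i) + q^i \geq 2$ for all $i \geq 1$ when $v_\p(x) \geq 1$, which is the precise form of the bound I need and again uses \ref{Hyp}). The same estimate shows $v_\p(\log_\phi(x)) = v_\p(x)$ for all $x \in \mathfrak m_\p$, since the $i=0$ term is strictly dominant; this gives both injectivity and the isometry property $v_\p(\log_\phi(x) - \log_\phi(y)) = v_\p(x-y)$ (using $\FF$-linearity of $\log_\phi$, as it is a twisted power series, so $\log_\phi(x) - \log_\phi(y) = \log_\phi(x-y)$ — wait, it is only $\FF$-linear and $\tau$-semilinear, but $x - y$ still lies in $\mathfrak m_\p$ and $\log_\phi$ is additive, so $\log_\phi(x-y) = \log_\phi(x) - \log_\phi(y)$ holds). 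For surjectivity I would invert the power series: $\exp_\phi$ is the formal inverse of $\log_\phi$, and I would run the analogous valuation estimate for the $e_i$ (via the recursion from $\phi_t \cdot \exp_\phi = \exp_\phi \cdot t$) to show $\exp_\phi$ also converges on $\mathfrak m_\p$ and maps it to itself; then $\exp_\phi \circ \log_\phi = \mathrm{id} = \log_\phi \circ \exp_\phi$ as formal identities pass to the convergent setting, giving the bijection. Alternatively, one can deduce surjectivity directly from the isometry via a completeness/successive-approximation argument, avoiding a second coefficient estimate.

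The main obstacle is the coefficient estimate itself: controlling $v_\p(\ell_i)$ precisely enough through the recursion, and in particular isolating why the hypothesis \ref{Hyp} is exactly what is needed. The subtlety is that $v_\p(t^{q^i}-t)$ is $0$ for most $i$ but jumps to a positive value on the arithmetic progression $i \in \deg\p \cdot \NN$, and these jumps can themselves be large when $p$ divides $i/\deg\p$; one must show that, cumulatively, the negative contributions to $v_\p(\ell_i)$ are outpaced by $q^i$ (the valuation gain from $x^{q^i}$ when $v_\p(x) \geq 1$). Since this is precisely the content of \cite[Lemma 3.20]{AT} cited just above the statement, I would lean on that reference for the delicate bookkeeping rather than reproving it, and focus the write-up on deriving convergence, the isometry, and bijectivity from it.
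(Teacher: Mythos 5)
The paper provides no proof of its own here; it cites~\cite[Lemma~3.20]{AT}, and you ultimately defer to the same reference, so the two approaches coincide. Your supplementary sketch of what that lemma establishes is essentially correct, apart from one error that in fact overcomplicates the bookkeeping: you claim $v_\p(t^{q^i}-t)$ ``can be larger'' than $1$ when $p$ divides $i/\deg\p$. This is false. The polynomial $t^{q^i}-t$ has derivative $-1$, so it is separable, and each irreducible factor occurs to multiplicity exactly one; hence $v_\p(t^{q^i}-t)$ equals $1$ if $\deg\p \mid i$ and $0$ otherwise. With this clean fact, the recursion $(t^{q^n}-t)\,\ell_n = -\sum_{j=1}^{\min(n,r)} \ell_{n-j}\,g_j^{q^{n-j}}$ gives by induction $v_\p(\ell_n) \geq -\lfloor n/\deg\p\rfloor$, and the strict domination $v_\p(\ell_n x^{q^n}) > v_\p(x)$ for all $n \geq 1$ and $v_\p(x) \geq 1$ reduces to $q^n - 1 > \lfloor n/\deg\p\rfloor$. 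The unique failure of this inequality is $q = 2$, $\deg\p = 1$, $n = 1$, which is precisely what~\ref{Hyp} excludes (and which the remark following the proposition handles by passing to $\mathfrak m_\p^2$). The remaining steps of your sketch --- additivity of $\log_\phi$ since each $x \mapsto x^{q^n}$ is $\FF$-linear, the isometry from strict domination of the $n=0$ term, and surjectivity via either a parallel estimate for $\exp_\phi$ or a fixed-point argument in the complete ring $A_\p$ --- are sound.
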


\begin{rem}
Proposition \ref{prop:isom-p-adic-log} remains valid without the assumption \ref{Hyp} at the cost of replacing $\mathfrak{m}_\p$ by $\mathfrak{m}_\p^2$.
\end{rem}

To avoid confusion, we will write $\log_{\phi,\p}$ for the $\p$-adic analytic function defined by $\log_\phi$.
We call it the \emph{$\p$-adic $\phi$-logarithm} and we extend it to a function $\log_{\phi,\p} : A_\p \to K_\p$ by setting:
\begin{equation}\label{eq:p-adic-log}
\log_{\phi,\p}(x):=\frac{1}{|\phi(\FF_\p)|}\cdot \log_{\phi,\p}\big(\phi_{|\phi(\FF_\p)|}(x)\big)
\end{equation}
which makes sense because $\phi_{|\phi(\FF_\p)|}(x)\in \mathfrak{m}_\p$.
It is actually sometimes convenient to consider $\log_{\phi,\p}$ with domain $\phi(A_\p)$ (which, we recall is just $A_\p$ endowed with the structure of $A$-module given by $\phi$) because it then becomes $A$-linear. 

\begin{lem}
\label{lem:torsion}
We assume \ref{Hyp}. Then $\ker\left(\log_{\phi,\p}:\phi(A)\to K_{\p}\right)= \phi(A)_{\mathrm{tors}}$.
\end{lem}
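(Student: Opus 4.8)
The plan is to show the two inclusions separately, the easy one being $\phi(A)_{\tors} \subseteq \ker(\log_{\phi,\p})$.

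\medskip

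\textbf{The easy inclusion.}
Suppose $x \in \phi(A)$ is a torsion point, so $\phi_a(x) = 0$ for some nonzero $a \in A$. Since $\log_{\phi,\p}$ is $A$-linear on $\phi(A_\p)$ (as recalled just before the lemma), we get $a \cdot \log_{\phi,\p}(x) = \log_{\phi,\p}(\phi_a(x)) = \log_{\phi,\p}(0) = 0$ in $K_\p$. As $K_\p$ is a field and $a \neq 0$, this forces $\log_{\phi,\p}(x) = 0$. This direction uses only $A$-linearity and needs no smallness or degree hypothesis beyond what makes $\log_{\phi,\p}$ defined, i.e. \ref{Hyp}.

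\medskip

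\textbf{The hard inclusion.}
Conversely, suppose $x \in \phi(A)$ satisfies $\log_{\phi,\p}(x) = 0$; we must show $x$ is torsion. First reduce to the case $x \in \mathfrak m_\p$: replacing $x$ by $\phi_{|\phi(\FF_\p)|}(x)$ only multiplies $\log_{\phi,\p}(x)$ by the nonzero scalar $|\phi(\FF_\p)|$ and, by $A$-linearity, $x$ is torsion if and only if $\phi_{|\phi(\FF_\p)|}(x)$ is (again using that $A$ acts without zero divisors, and that $|\phi(\FF_\p)|$ is coprime to the "problematic" torsion only if... here one must be a little careful — in fact $\phi_{|\phi(\FF_\p)|}(x)$ torsion immediately gives $x$ torsion, and that is all we need). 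Now $x \in \mathfrak m_\p \cap \phi(A)$ and, by Proposition~\ref{prop:isom-p-adic-log}, $\log_\phi$ restricts to a \emph{bijective} isometry of $\mathfrak m_\p$; in particular it is injective on $\mathfrak m_\p$, so $\log_{\phi,\p}(x) = 0 = \log_{\phi,\p}(0)$ forces $x = 0$, which is certainly torsion. Wait — this shows the kernel restricted to $\mathfrak m_\p$ is trivial, but an element of $\phi(A)$ need not lie in $\mathfrak m_\p$; that is exactly why the reduction step above is needed, and why one works with the extended definition~\eqref{eq:p-adic-log}. So the real content is: $\log_{\phi,\p}(x) = 0 \Rightarrow \log_{\phi,\p}(\phi_{|\phi(\FF_\p)|}(x)) = 0 \Rightarrow \phi_{|\phi(\FF_\p)|}(x) = 0$ by injectivity of $\log_\phi$ on $\mathfrak m_\p$, and the last equality literally says $x$ is a $|\phi(\FF_\p)|$-torsion point of $\phi(A)$.

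\medskip

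\textbf{Main obstacle.}
The subtle point — and the only place where the hypothesis $x \in \phi(A)$ (as opposed to $x \in \phi(A_\p)$) might seem to matter — is the passage from "$\log_{\phi,\p}(x)=0$" to "$\phi_{|\phi(\FF_\p)|}(x) = 0$". Over $A_\p$ this is immediate from injectivity of $\log_\phi$ on $\mathfrak m_\p$; over $A$ it is the restriction of the same statement, so no extra work is needed. Thus I expect the proof to be genuinely short, the one thing to get right being the bookkeeping in the reduction to $\mathfrak m_\p$ via~\eqref{eq:p-adic-log} and the observation that a torsion point of $\phi(A_\p)$ lying in $\phi(A)$ is a torsion point of $\phi(A)$ (clear, since the annihilating $a \in A$ works in both). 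One should also note explicitly that $|\phi(\FF_\p)| \neq 0$ in $A$ so that the scalar in~\eqref{eq:p-adic-log} is harmless.
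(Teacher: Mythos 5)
Your proof is correct and follows essentially the same route as the paper's: for the easy inclusion, apply $A$-linearity to $\phi_a(x)=0$; for the hard inclusion, apply $\phi_{|\phi(\FF_\p)|}$ to land in $\mathfrak m_\p$, then invoke the injectivity of $\log_\phi$ on $\mathfrak m_\p$ from Proposition~\ref{prop:isom-p-adic-log} to conclude $\phi_{|\phi(\FF_\p)|}(x)=0$. The only difference is presentational: the paper states this in a few lines, whereas your write-up circles through the ``reduction to $\mathfrak m_\p$'' bookkeeping twice before settling on the clean chain of implications.
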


\begin{proof}
We consider $x\in \phi(A)_{\operatorname{tors}}$ and $a\in A\backslash\{0\}$ such that $\phi_a(x)=0$.
Then 
\[a \log_{\phi,\p}(x)= \log_{\phi,\p}\phi_a(x)=0\]
from what it follows that $\log_{\phi,\p}(x)$ vanishes.
Conversely, let $x\in A$ such that $\log_{\phi,\p}(x)=0$. Then, taking $a = |\phi(\FF_\p)|$, we have
$\log_{\phi,\p}(\phi_a(x)) = a \log_{\phi,\p}(x) = 0$.
Since $\phi_a(x) \in \mathfrak{m}_\p$, we deduce from Proposition~\ref{prop:isom-p-adic-log} that $\phi_a(x)=0$. So $x\in \phi(A)_\tors$.
\end{proof}

\subsubsection{$T$-twisted versions}
\label{sssec:Ttwisted}

Let $T$ be a new formal variable. 
Following Anglès--Ngo Dac--Tavares Ribeiro in \cite{ANT}, we write $\phi_t=t+g_1 \tau+\ldots +g_r \tau^r$ and set 
\[
\phiT_t:=t +g_1 T\tau+\ldots + g_r T^{r} \tau^r \in A[T]\{\tau\}:=A\{\tau\}\otimes_{\FF}\FF[T]
\]
(so that the variables $\tau$ and $T$ commute). This allows to define another object, playing the role of a $T$-deformation of $\phi$, which is tightly related to the $L$-series of $\phi$.

\begin{deftn}\label{def:Ttwisted}
The \emph{$T$-twisted form of $\phi$} is the $\FF$-algebra homomorphism $\phiT:A\to A[T]\{\tau\}$ induced by $t \mapsto \phiT_t$.
\end{deftn}

\begin{rem}
Note that we recover $\phi$ via the composition $\phiT:A\to A[T]\{\tau\}\to A\{\tau\}$ mapping $T$ to $1$.
\end{rem}

Many gadgets attached to Drinfeld models admit a canonical $T$-twisted version which recovers the classical version as $T=1$. In particular, given an $A$-algebra $R$, we let $\phiT(R)$ denote $R[T]$ endowed with its $A[T]$-module structure induced by $\phiT$. 
Let also $|\phiT(R)|\subset R[T]$ be its Fitting ideal.
If $R = F$ is a finite extension of $\FF$, we observe that $\phiT(F)$ admits the free $A[T]$-linear resolution
\begin{equation}
\label{eq:resolutionphiT}
0\longrightarrow A \otimes_{\FF} F[T] \xrightarrow{t-\phiT_t} A \otimes_{\FF} F[T] \xrightarrow{a \otimes x\:\mapsto\:\phiT_a(x)} \phiT(F) \longrightarrow 0.
\end{equation}
Hence $|\phiT(F)|$ is the principal ideal generated by
\[
\mathrm{det}_{A[T]} \big(t-\phiT_t~|~A \otimes_{\FF} F[T]\big)\in A[T].
\]
In what follows, we will often abuse notation and continue to write $|\phiT(F)|$ for the previous generator.

We define as well the \emph{$T$-twisted exponential} of $\phi$ as the (noncommutative) power series 
\begin{equation}\label{eq:TtwistedExpE}
\expT_\phi:=e_0+e_1 T\tau+e_2 T^2\tau^2+\ldots \in K[T]\{\!\{\tau\}\!\}
\end{equation}
where $K[T]\{\!\{\tau\}\!\}$ is understood as the noncommutative ring where $\tau c=c^q\tau$ ($c\in K$) and $\tau T=T\tau$, and where the coefficients $e_i\in K$ are the same than in Equation~\eqref{eq:exponential}. Note that we have 
$\phiT_{a}\cdot \expT_\phi=\expT_\phi\cdot\,a$
for all $a\in A$ (see \cite[\S 2.2]{ANT} for details).

Similarly, the \emph{$T$-twisted logarithm} of $\phi$ is the series:
\[
\logT_\phi:=\ell_0+\ell_1 T\tau+\ell_2 T^2\tau^2+\ldots \in K[T]\{\!\{\tau\}\!\}.
\]
Again, it satisfies $\logT_{\phi}\cdot\,\phiT_{a}=a\cdot \logT_{\phi}$ for all $a\in A$ and we have
\[
\expT_{\phi}\cdot \logT_{\phi}=\logT_{\phi}\cdot \expT_{\phi}=1.
\] 
A $\p$-adic incarnation of the $T$-twisted logarithm also exists. To define it, we consider the Tate algebra over $K_\p$ in the variable $T$:
\[
K_\p\langle T\rangle=\left\{\, \sum_{n = 0}^\infty a_n T^n\ens a_n\in K_\p \text{ and } \lim\limits_{n\rightarrow \infty}v_\p(a_n)=0\, \right\}
\]
Similarly, we denote by $A_\p\langle T\rangle$ (resp. $\mathfrak m_\p\langle T\rangle$) the subspace of $K_\p\langle T\rangle$ consisting of power series with coefficients in $A_\p$ (resp. in $\mathfrak m_\p$).
The formal series $\logT_{\phi}$ converges over $\mathfrak{m}_{\mathfrak{p}}\langle T \rangle$ and we extend it to a function
\[
\logT_{\phi,\p}:A_{\mathfrak{p}}\langle T \rangle \longrightarrow |\phiT(\FF_\p)|^{-1}\cdot K_{\mathfrak{p}}\langle T\rangle
\]
by setting
\[
\logT_{\phi,\p}(f):=\frac{1}{|\phiT(\FF_\p)|} \cdot \logT_{\phi}(\phiT_{|\phiT(\FF_\p)|}(f)).
\]
(compare with Equation~\eqref{eq:p-adic-log}).
Be careful that $|\phiT(\FF_\p)|$ is not invertible in $K_{\mathfrak{p}}\langle T\rangle$ because it is a polynomial in $T$ whose constant coefficient is $\mathfrak{p}$ and whose leading coefficient is a unit (see below).

\subsubsection{$L$-series and class formula}

Let $\phi$ be a Drinfeld model and $\phiT$ be the $T$-twisted form of $\phi$ as before.
In what follows, we attach an $L$-series to $\phi$ out of $\phiT$, following \cite{ANT}.
For this, the first step is to define local factors.
Given a place $\p$ of $A$, the \emph{local $L$-factor} of $\phi$, denoted by $P_{\p}(\phi;T)$, is the following polynomial in $T$:
\begin{equation}\label{eq:localLfactor}
P_{\p}(\phi;T) := \p^{-1} \cdot |\phiT(\FF_{\p})| \in \FF(t)[T].
\end{equation}
Evaluating at $T=0$ and letting $\theta$ denote the image of $t$ in $\FF_\p = \FF[t]/\p$, we get
\[
|\phiT(\FF_\p)|_{|T=0} = \mathrm{det}_{A}\big(t-\theta\,|\,\FF_{\p}[t]\big) = \mathrm{Norm}_{\FF_{\p}[t]/\FF[t]}(t-\theta) = \p.
\]
Hence $P_{\p}(\phi;0) = 1$.
It can be shown more generally that $P_{\p}(\phi;T)$ belongs to $1+T^{\deg \p}K[T]$. In particular, the following product
\begin{equation}\label{eq:formal-L-series}
L(\phi;T):=\prod_{\p} \frac 1 {P_{\p}(\phi;T)},
\end{equation}
taken over all monic irreducible polynomials in $A$, makes sense in $K[\![T]\!]$. 

\begin{deftn}
The series $L(\phi;T)$ is called the \emph{formal $L$-series} of $\phi$.\\
The \emph{formal $\p$-adic $L$-series} of $\phi$ is defined as $L_{\p}(\phi;T):=P_{\p}(\phi;T)L(\phi;T)$.
\end{deftn}

It follows from \cite[Lemma~2.2]{AT} that $L(\phi;T)$ converges on the closed $\infty$-adic unit disk. Therefore, we can evaluate the $L$-series at $T=1$. 
By~\cite[Theorem~3.23]{AT}, we know similarly that $L_{\p}(\phi;T)$ defines an analytic function on $A_\p$. However, in the $\p$-adic case, $L_\p(\phi;T)$ may vanish at $T=1$, so we further define a \emph{special $L$-value} at $T=1$ by factoring out the highest power at $T=1$: we write
\[
L_\p(\phi;T)=(T-1)^k \cdot L^*_\p(\phi;T)
\]
with $L^*_\p(\phi;1)\neq 0$ for a unique integer $k\geq 0$. The special value mentioned above is $L^*_\p(\phi;1)$.

\begin{rem}
Both results of convergence can be recovered by combining Theorem~\ref{thm:compLseries} of Appendix~\ref{app:anderson} together with a straightforward extension of \cite[Theorem~2.2.6]{caruso-gazda}, proving thusly that $L(\phi;T)$ and $L_{\p}(\phi;T)$ have actually infinite radius of convergence.
\end{rem}

There is a close connection between $L$-series and $T$-twisted exponentials and logarithms as defined previously.
Indeed, Anglès, Ngo Dac and Tavares Ribeiro proved in \cite{ANT} that the element
\begin{equation}\label{eq:Ttwisted-class-formula}
u_\phi(T):=\expT_{\phi}\left(L(\phi;T)\right)
\end{equation}
lies in $A[T]$. This is the so-called \emph{$T$-twisted class formula}.
Indeed, taking $T = 1$, we observe that $u_\phi(1)$ is the image of $L(\phi; 1)$ by the classical exponential attached to $\phi$;
Anglès, Ngo Dac and Tavares Ribeiro's result then appears as a generalization of the classical Taelman's class formula~\cite{taelman}.

Applying $\logT_{\phi}$ to Equation~\eqref{eq:Ttwisted-class-formula}, we obtain
\begin{equation}\label{classformulaT}
L_{\p}(\phi;T) = \p^{-1} \cdot \logT_{\phi,\p} \phi_{|\phiT(\FF_\p)|}(u_\phi(T))\in K_{\p}\langle T\rangle
\end{equation}
which, after taking the values at $T=1$ yields the so-called \emph{$\p$-adic class formula}:
\begin{equation}\label{classformula}
L_{\p}(\phi;1) = \p^{-1}\cdot \log_{\phi,\p} \phi_{|{\phi}(\FF_\p)|}(u_\phi(1))\in K_{\p}.
\end{equation}

\subsection{Ordic valuations}

In this subsection, we fix a Drinfeld model $\phi$. We also fix a base-point $x\in A$. The next definition generalizes Thakur's notion of $C$-Wieferich primes to $\phi$.
\begin{deftn}\label{def:pi}\label{def:wieferich}
Given an ideal $I$ of $A$, we define
\[
\pi_x(\phi;I) = \ker\big( A \to A/I, \, a \mapsto \phi_a(x) \big).
\]
A place $\p$ of $A$ is said \emph{$\phi$-Wieferich in base $x$} if
$\pi_x(\phi;\p) = \pi_x(\phi;\p^2)$.
\end{deftn}

\begin{rem}
\label{rem:ind-Fx}
The function $\pi_{x}$ only depends on the $\FF^{\times}$-orbit of $x$. 
\end{rem}

We record some properties of $\pi_x$ as a function of ideals.
\begin{prop}\label{prop:properties-of-pi}
Let $I$ and $J$ be two nonzero ideals in $A$.
\begin{enumerate}[label=$(\arabic*)$]
\item\label{item:inclusion} If $I\subset J$, then $\pi_x(\phi;I)\subset \pi_x(\phi;J)$.
\item\label{item:coprime} If $I$ and $J$ are coprime, $\pi_x(\phi;IJ)=\pi_x(\phi;I)\cap \pi_x(\phi;J)$.
\item\label{item:tour} If $\p$ is a maximal ideal satisfying \ref{Hyp} and $k\geq 2$, then 
  $\pi_x(\phi;\p^k)$ either equals $\pi_x(\phi;\p^{k-1})$ or $\p \pi_x(\phi;\p^{k-1})$;
  besides, if $\pi_x(\phi;\p^k)=\p \pi_x(\phi;\p^{k-1})$ then $\pi_x(\phi;\p^{k+1})=\p \pi_x(\phi;\p^{k})$.
\item\label{item:divise} $\pi_x(\phi;I)$ divides the ideal $|\phi(A/I)|$. 
\end{enumerate}
\end{prop}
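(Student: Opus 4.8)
The plan is to handle the four assertions in turn, the first two being formal and the last two carrying the content. For \ref{item:inclusion}, I would simply note that $\pi_x(\phi;I)=\{a\in A:\phi_a(x)\in I\}$, so an inclusion $I\subseteq J$ immediately yields $\pi_x(\phi;I)\subseteq\pi_x(\phi;J)$. For \ref{item:coprime}, coprimality of $I$ and $J$ gives $IJ=I\cap J$, so $\phi_a(x)\in IJ$ if and only if $\phi_a(x)$ lies in both $I$ and $J$; equivalently, under the Chinese Remainder isomorphism $A/IJ\cong A/I\times A/J$ the map $a\mapsto\phi_a(x)\bmod IJ$ becomes the pair of reductions, whose kernel is the intersection of the two kernels.

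For \ref{item:divise}, the point is that $a\mapsto\phi_a(x)$ is $A$-linear from $A$ to the finite torsion $A$-module $\phi(A/I)$ (it is multiplication by the class of $x$ inside $\phi(A/I)$), with kernel $\pi_x(\phi;I)$ by definition, so that $A/\pi_x(\phi;I)$ embeds as an $A$-submodule of $\phi(A/I)$. Consequently $\pi_x(\phi;I)$ contains, hence divides, the annihilator of $\phi(A/I)$, which over the principal ideal domain $A$ is the largest elementary divisor of $\phi(A/I)$ and in particular divides the Fitting ideal $|\phi(A/I)|$; chaining these divisibilities gives $\pi_x(\phi;I)\mid|\phi(A/I)|$.

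The substantial item is \ref{item:tour}, which I would reduce to a single valuation computation. Let $f$ be the monic generator of $\pi_x(\phi;\p)$ (nonzero because $\phi(A/\p)$ is finite whereas $A$ is not), and set $z:=\phi_f(x)\in A$, so that $z\in\p A\subseteq\mathfrak m_\p$ by the choice of $f$. For each $k\geq 1$, item \ref{item:inclusion} forces $\pi_x(\phi;\p^k)\subseteq(f)$; writing $a=uf$ and using $\phi_a(x)=\phi_u(\phi_f(x))=\phi_u(z)$, one obtains $\pi_x(\phi;\p^k)=f\cdot\{u\in A:\phi_u(z)\in\mathfrak m_\p^k\}$. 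The crucial input is then the identity $v_\p(\phi_u(z))=v_\p(u)+v_\p(z)$ for $u\in A$: since $\phi_u(z)\in\mathfrak m_\p$, Proposition~\ref{prop:isom-p-adic-log} (this is where \ref{Hyp} enters) gives $v_\p(\phi_u(z))=v_\p(\log_{\phi,\p}(\phi_u(z)))=v_\p(u\cdot\log_{\phi,\p}(z))=v_\p(u)+v_\p(z)$, using the $A$-linearity of $\log_{\phi,\p}$ and the isometry property applied to $z$. Setting $n:=v_\p(z)$ (an integer $\geq 1$, or $+\infty$ if $z=0$), the set of admissible $u$'s equals $A$ when $k\leq n$ and $(\p^{k-n})$ when $k\geq n$, whence $\pi_x(\phi;\p^k)=(f)$ for $1\leq k\leq n$ and $\pi_x(\phi;\p^k)=(\p^{k-n}f)$ for $k\geq n$. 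Since the hypothesis $k\geq 2$ makes this description available for both exponents $k-1$ and $k$, assertion \ref{item:tour}, namely the dichotomy together with the persistence of a $\p$-jump once it has occurred, follows immediately from this formula (using $n\geq 1$).

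I expect the valuation identity $v_\p(\phi_u(z))=v_\p(u)+v_\p(z)$ to be the main obstacle: it is precisely the step that requires $\log_{\phi,\p}$ to be an isometry on all of $\mathfrak m_\p$ rather than only on $\mathfrak m_\p^2$, which is what forces the hypothesis \ref{Hyp}, and it is also where one must take care that the base point $x$ need not itself lie in $\mathfrak m_\p$, the reason for first replacing it by $z=\phi_f(x)$.
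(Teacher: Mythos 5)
Your treatment of~\ref{item:inclusion}, \ref{item:coprime} and~\ref{item:divise} matches the paper's (the paper phrases~\ref{item:divise} as the chain $\mathrm{Fitt}\subset\mathrm{Ann}\subset\pi_x(\phi;I)$, which is exactly what you wrote). For~\ref{item:tour}, however, you take a genuinely different route. The paper stays elementary: it first shows $\p\,\pi_x(\phi;\p^{k-1})\subset\pi_x(\phi;\p^k)\subset\pi_x(\phi;\p^{k-1})$ by expanding $\phi_{\p a}(x)=\phi_\p(\phi_a(x))$ and using that $\p^{q(k-1)}\subset\p^k$ once $k\geq 2$, which gives the dichotomy directly; persistence is then obtained via the Snake lemma reduction to the injectivity of multiplication by $\p$ on successive quotients, and the case $q=2$ is handled separately by invoking Gekeler's bound on the $\tau$-valuation of $\phi_\p\bmod\p$. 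You instead reach for the $\p$-adic logarithm: setting $z=\phi_f(x)\in\mathfrak m_\p$ and invoking Proposition~\ref{prop:isom-p-adic-log} plus $A$-linearity of $\log_{\phi,\p}$, you derive the clean valuation identity $v_\p(\phi_u(z))=v_\p(u)+v_\p(z)$, hence the closed formula $\pi_x(\phi;\p^k)=(\p^{\max(0,\,k-n)}f)$ with $n=v_\p(z)\geq 1$, from which all of~\ref{item:tour} drops out at once. Your argument is correct and in fact proves more than is asked (it is essentially Proposition~\ref{prop:clé} in disguise), but it imports the analytical input of Proposition~\ref{prop:isom-p-adic-log} into a lemma that the paper deliberately proves by pure divisibility manipulations, in order to keep the algebraic preliminaries independent of the $\p$-adic logarithm and to make the logarithm's first appearance Proposition~\ref{prop:clé}. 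Both strategies need hypothesis~\ref{Hyp} at exactly one point: in the paper, to make $\p^{q(k-1)}\subset\p^k$ and the Gekeler estimate available; in yours, to have the isometry on all of $\mathfrak m_\p$ rather than just on $\mathfrak m_\p^2$, which you correctly flag as the crux since $z$ need not lie in $\mathfrak m_\p^2$.
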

\begin{proof}
Point \ref{item:inclusion} is deduced from the $A$-module map $\phi(A/I)\to \phi(A/J)$.
By the Chinese remainder theorem, we have $\phi(A/IJ)\cong \phi(A/I)\times \phi(A/J)$ and point \ref{item:coprime} follows.

We turn to \ref{item:tour}. By \ref{item:inclusion}, we have $\pi_x(\phi;\p^{k})\subset \pi_x(\phi;\p^{k-1})$. We claim that $\p\pi_x(\phi;\p^{k-1})\subset \pi_x(\phi;\p^k)$. Indeed, for $a\in \pi_x(\phi;\p^{k-1})$, we have $\phi_a(x)\in \p^{k-1}$ and hence $\phi_{\p a}(x)=\phi_{\p}(\phi_a(x))\in \p^{k}+(\phi_a(x))^q$; assumptions on $k$ are such that $\p^{q(k-1)}\subset \p^k$, thus $\phi_{\p a}(x)\in \p^k$.

This shows that $\pi_x(\phi;\p^k)$ either equals $\pi_x(\phi;\p^{k-1})$ or $\p\pi_x(\phi;\p^{k-1})$. It remains to prove that if the latter case holds, then $\pi_x(\phi;\p^{k+1})=\p\pi_x(\phi;\p^k)$; for that it is enough to show that the canonical map
\[
\pi_x(\phi;\p^{k+1})/\p \pi_x(\phi;\p^k)\longrightarrow \pi_x(\phi;\p^{k})/\p \pi_x(\phi;\p^{k-1})
\]
is injective. By the Snake lemma, this amounts to showing that the multiplication by $\p$
\[
\pi_x(\phi;\p^{k-1})/\pi_x(\phi;\p^k)\xrightarrow{\times \p} \pi_x(\phi;\p^{k})/\pi_x(\phi;\p^{k+1})
\]
is injective. Given $a\in \pi_x(\phi;\p^{k-1})$ such that $\p a$ belongs to $\pi_x(\phi;\p^{k+1})$, we have $\p \phi_a(x)\in (\phi_{\p a}(x))+(\phi_{\p}(x))^q\subset \p^{k+1}$. If $q>2$, we deduce that $a\in \pi_x(\phi;\p^{k})$ which shows injectivity. If $q=2$, the assumption \ref{Hyp} ensures that the degree of $\p$ is at least $2$; hence by \cite[1.4.(ii)]{gekeler} the $\tau$-valuation of $\phi_{\p} \bmod{\p}$ is at least $2$ as well. We deduce that
\[
\phi_{\p a}(x)=\phi_\p(\phi_a(x))\in \p(\phi_a(x))+\p(\phi_a(x)^2)+(\phi_{a}(x))^{4}\subset \p^{k+1}
\]
from what we get $a\in \pi_x(\phi;\p^{k})$ as before.

Point \ref{item:divise} is deduced from the fact that the Fitting ideal $|\phi(A/I)|$ is contained in the annihilator of $\phi(A/I)$.
\end{proof}

Since the ring $A$ is principal, those properties reduces the determination of the function $\pi_x(\phi;I)$ to that of its value on maximal ideals and the integers $c_{\p}(\phi;x)$ which appear in the next definition.
\begin{deftn}
Let $\p$ be a prime in $A$. 
The \emph{$\p$-ordic valuation} of $x$ relatively to $\phi$, denoted by $c_{\p}(\phi;x)$, is the largest $c \in \{0,1,\ldots,\infty\}$ such that $\pi_x(\phi;\p^{c+1})=\pi_x(\phi;\p)$.
\end{deftn}

That is, a prime $\p$ is $\phi$-Wieferich if and only if $c_{\p}(\phi;x)>0$. Besides, $c_{\p}(\phi;x)=\infty$ if and only if $x$ is a $\phi$-torsion point,
\emph{i.e.} there exists $a \in A$ such that $\phi_a(x) = 0$.

\begin{rem}
We deduce from Proposition~\ref{prop:properties-of-pi} that 
\[
\pi_x(\phi;I)=\bigcap_{i=1}^s\p_i^{\max(0,\,k_i-c_{\p_i}(\phi;x)-1)}\:\pi_x(\phi;\p_i)
\]
if $I=\p_1^{k_1}\cdots \p_s^{k_s}$ is the prime ideal decomposition of $I\neq (0)$. 
\end{rem}

\subsection{Connection with special values of $L$-series}

In this subsection, we present a set of theorems relating ordic valuations to the $\p$-adic valuation of several values of interest attached to the $\p$-adic $L$-series.

\subsubsection{The value at $1$}

To start with, we focus on $L_\p(\phi; 1)$.
We will show that its $\p$-adic valuation is related to $c_{\p}(\phi; x)$ when $x$ is the Taelman unit $u_\phi(1)$ introduced at the end of Subsection~\ref{sec:background}.

\begin{prop}\label{prop:clé} Let $\p$ be a place satisfying \ref{Hyp}, let $x\in A$. Then
\begin{equation}\label{eq:compute-cp}
c_{\p}(\phi;x)=v_{\p}\big(\phi_{|\phi(\FF_\p)|}(x)\big)-1.
\end{equation}
\end{prop}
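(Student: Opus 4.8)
The plan is to express both $c_\p(\phi;x)$ and $v_\p\big(\phi_{|\phi(\FF_\p)|}(x)\big)$ through the $\p$-adic logarithm and check that the resulting formulas agree. First I would dispose of the torsion case: if $x$ is a $\phi$-torsion point then $c_\p(\phi;x)=+\infty$ by the remark after the definition of the ordic valuation, while $\log_{\phi,\p}(x)=0$ by Lemma~\ref{lem:torsion}; since the extension $\log_{\phi,\p}$ to $\phi(A_\p)$ is $A$-linear, $\log_{\phi,\p}\big(\phi_{|\phi(\FF_\p)|}(x)\big)=|\phi(\FF_\p)|\cdot\log_{\phi,\p}(x)=0$, and because $\phi_{|\phi(\FF_\p)|}(x)\in\mathfrak{m}_\p$ while $\log_\phi$ is injective on $\mathfrak{m}_\p$ (Proposition~\ref{prop:isom-p-adic-log}), this forces $\phi_{|\phi(\FF_\p)|}(x)=0$, so both sides are $+\infty$. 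From now on assume $x$ is not torsion and set $a_0:=|\phi(\FF_\p)|$; note $a_0\in\pi_x(\phi;\p)$ by Proposition~\ref{prop:properties-of-pi}\ref{item:divise}, so $\phi_{a_0}(x)\in\mathfrak{m}_\p$.

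The heart of the matter is the following remark: for every $a\in\pi_x(\phi;\p)$ one has $\phi_a(x)\in\mathfrak{m}_\p$, hence, using that $\log_\phi$ is an isometry on $\mathfrak{m}_\p$ and that $\log_{\phi,\p}$ is $A$-linear,
\[
v_\p\big(\phi_a(x)\big)=v_\p\big(\log_{\phi,\p}(\phi_a(x))\big)=v_\p\big(a\cdot\log_{\phi,\p}(x)\big)=v_\p(a)+v_\p\big(\log_{\phi,\p}(x)\big).
\]
Write $L:=v_\p\big(\log_{\phi,\p}(x)\big)\in\ZZ$ (finite by Lemma~\ref{lem:torsion}) and let $\pi$ be the monic generator of $\pi_x(\phi;\p)$. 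Specialising $a=a_0$ gives $v_\p\big(\phi_{a_0}(x)\big)=v_\p(a_0)+L$. On the other hand, for $k\ge 1$ an element $a$ belongs to $\pi_x(\phi;\p^k)$ iff $v_\p(\phi_a(x))\ge k$; since $v_\p(\phi_a(x))=0$ when $a\notin\pi_x(\phi;\p)$ and $v_\p(\phi_a(x))=v_\p(a)+L$ otherwise, this yields $\pi_x(\phi;\p^k)=(\pi)\cap\p^{\max(k-L,\,0)}$, an ideal which equals $(\pi)=\pi_x(\phi;\p)$ exactly when $k\le L+v_\p(\pi)$. Hence $c_\p(\phi;x)=L+v_\p(\pi)-1$, and combining with the formula for $v_\p(\phi_{a_0}(x))$ we obtain $c_\p(\phi;x)-\big(v_\p(\phi_{a_0}(x))-1\big)=v_\p(\pi)-v_\p(a_0)$.

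The proof therefore reduces to the equality $v_\p\big(\pi_x(\phi;\p)\big)=v_\p\big(|\phi(\FF_\p)|\big)$, and this is the step I expect to be the real obstacle: a priori one only knows that $\pi_x(\phi;\p)$ divides $|\phi(\FF_\p)|$ (Proposition~\ref{prop:properties-of-pi}\ref{item:divise}), so one must show the two ideals carry the same power of $\p$. I would attack this through the structure of $\phi(\FF_\p)$ as an $A$-module: from the resolution~\eqref{eq:resolutionphiT} at $T=1$, $|\phi(\FF_\p)|$ is the characteristic polynomial of the $\FF$-linear endomorphism of $\FF_\p$ induced by $\phi_t$, and one needs to control both its $\p$-part and the position of $x\bmod\p$ inside $\phi(\FF_\p)$ — equivalently, to pin down the behaviour at $T=1$ of the local $L$-factor $P_\p(\phi;T)=\p^{-1}\,|\phiT(\FF_\p)|$. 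Once this comparison of valuations is established, the displayed difference vanishes and identity~\eqref{eq:compute-cp} follows.
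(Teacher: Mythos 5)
Your reduction is correct and, for its first half, genuinely different from the paper's: you linearize the problem through the $\p$-adic logarithm (using the isometry of Proposition~\ref{prop:isom-p-adic-log} and the $A$-linearity of $\log_{\phi,\p}$ to get $v_\p(\phi_a(x)) = v_\p(a) + v_\p(\log_{\phi,\p}(x))$ for $a \in \pi_x(\phi;\p)$), whereas the paper never invokes the logarithm at this point and instead manipulates the ideals $\pi_x(\phi;\p^k)$ directly via Proposition~\ref{prop:properties-of-pi}\ref{item:tour}. Both routes land on the same residual claim, namely $v_\p\big(\pi_x(\phi;\p)\big) = v_\p\big(|\phi(\FF_\p)|\big)$, and this is where your proposal has a genuine gap: you flag it as ``the real obstacle'' and sketch an attack through the characteristic polynomial of Frobenius and the local $L$-factor at $T=1$, but you do not carry it out, and that machinery is not what is needed.

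The paper closes this step with an elementary degree count. Since $\phi(\FF_\p)$ is just $\FF_\p$ as an $\FF$-vector space, the Fitting ideal satisfies $\deg |\phi(\FF_\p)| = \dim_\FF \FF_\p = \deg \p$; and $\pi_x(\phi;\p)$ divides $|\phi(\FF_\p)|$ by Proposition~\ref{prop:properties-of-pi}\ref{item:divise}. Hence the quotient $|\phi(\FF_\p)| / \pi_x(\phi;\p)$ has degree $\deg\p - \deg \pi_x(\phi;\p) < \deg\p$ (as soon as $\pi_x(\phi;\p) \neq A$, i.e. $\p \nmid x$ --- a hypothesis also implicit in the paper's argument), so it cannot be divisible by $\p$ and the two $\p$-adic valuations agree. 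No information about the $\p$-part of the characteristic polynomial of Frobenius, nor about $P_\p(\phi;1)$, is required. With this one-line observation inserted, your computation $c_\p(\phi;x) - \big(v_\p(\phi_{|\phi(\FF_\p)|}(x)) - 1\big) = v_\p(\pi_x(\phi;\p)) - v_\p(|\phi(\FF_\p)|) = 0$ completes the proof; as it stands, however, the decisive step is missing.
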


\begin{proof}
To simplify notations, we set $a := |\phi(\FF_\p)| \in A$ throughout the proof.

We claim that $x$ is a $\phi$-torsion point if and only if $\phi_a(x)=0$.
The direct implication is clear. To prove the converse, we assume that $\phi_a(x) \neq 0$. Since $\phi_a(x) \in \p A$, in virtue of Proposition \ref{prop:isom-p-adic-log}, we may apply the $\p$-adic logarithm and get 
$\log_{\phi,\p}(\phi_a(x))=a \log_{\phi,\p}(x)\neq 0$. In particular, $\log_{\phi,\p}(x)\neq 0$ and according to Lemma~\ref{lem:torsion}, $x$ is not a torsion point.

We now observe that, if $x$ is $\phi$-torsion, then both quantities in~\eqref{eq:compute-cp} are infinite by the above claim and the result is clear.
Thus, we may assume that $x$ is not $\phi$-torsion, so that both quantities are finite. As $\FF$-vector spaces, we have $\phi(A/I)= A/I$ where $I\subset A$ is an ideal.
In particular, $\deg |\phi(A/I)|=\dim_{\FF} \phi(A/I)=\dim_{\FF} A/I=\deg I$.  We deduce that the quotient $a/\pi_x(\phi;\p)$ is of degree smaller than $\deg \p$, hence prime to $\p$.

We also note that, for a given nonnegative integer $k$, the condition 
``$\p^k$ divides $\phi_a(x)$'' is equivalent to ``$\pi_{x}(\phi;\p^k)$ divides $a$''.
Therefore, $v_{\p}(\phi_a(x))$ is the largest integer $k$ such that $\pi_x(\phi;\p^k)$ divides $a$.
Since the quotient $a/\pi_x(\phi;\p)$ is prime to $\p$, Property~\ref{item:tour} of Proposition~\ref{prop:properties-of-pi} gives the equality~\eqref{eq:compute-cp}.
\end{proof}

\begin{thm}
Let $\p$ be a place satisfying \ref{Hyp}. Then
\[
v_{\p} (L_{\p}(\phi;1))=c_{\p}(\phi;u_\phi(1)).
\]
In particular, $\p$ is $\phi$-Wieferich in base $u_{\phi}(1)$ if and only if $\p$ divides $L_{\p}(\phi;1)$.
\end{thm}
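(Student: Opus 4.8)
The plan is to combine the $\p$-adic class formula~\eqref{classformula} with Proposition~\ref{prop:clé}. Recall that the class formula states
\[
L_{\p}(\phi;1) = \p^{-1}\cdot \log_{\phi,\p}\big(\phi_{|\phi(\FF_\p)|}(u_\phi(1))\big),
\]
so the first step is to compute the $\p$-adic valuation of the right-hand side. Set $a := |\phi(\FF_\p)|$ and $y := \phi_a(u_\phi(1))$. Since $P_\p(\phi;T) = \p^{-1}|\phiT(\FF_\p)|$ has constant coefficient $1$ (as observed in the excerpt, $|\phiT(\FF_\p)|_{|T=0} = \p$), the element $a = |\phi(\FF_\p)|$ is obtained by evaluating at $T=1$, and in particular $v_\p(a) = 0$ unless... actually we need $a \in \p A$, which is exactly what makes $y \in \mathfrak m_\p$ and allows the application of $\log_{\phi,\p}$. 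Indeed $|\phi(\FF_\p)|$ annihilates $\phi(\FF_\p)$, which has order divisible by $\p$ (the degree computation $\deg|\phi(A/I)| = \deg I$ applied to $I = \p$), so $\p \mid a$; more precisely $a \in \p A$.

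The key point is that $\log_{\phi,\p}$ is a bijective isometry of $\mathfrak m_\p$ by Proposition~\ref{prop:isom-p-adic-log} (using hypothesis~\ref{Hyp}). Hence $v_\p\big(\log_{\phi,\p}(y)\big) = v_\p(y) = v_\p\big(\phi_{|\phi(\FF_\p)|}(u_\phi(1))\big)$. Therefore
\[
v_\p\big(L_\p(\phi;1)\big) = v_\p\big(\log_{\phi,\p}(y)\big) - v_\p(\p) = v_\p\big(\phi_{|\phi(\FF_\p)|}(u_\phi(1))\big) - 1.
\]
Now I invoke Proposition~\ref{prop:clé} with $x = u_\phi(1)$, which gives precisely $c_\p(\phi;u_\phi(1)) = v_\p\big(\phi_{|\phi(\FF_\p)|}(u_\phi(1))\big) - 1$. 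Combining the two displays yields $v_\p\big(L_\p(\phi;1)\big) = c_\p(\phi;u_\phi(1))$, as desired. The ``in particular'' clause is then immediate: by definition $\p$ is $\phi$-Wieferich in base $u_\phi(1)$ iff $c_\p(\phi;u_\phi(1)) > 0$ iff $v_\p(L_\p(\phi;1)) > 0$ iff $\p \mid L_\p(\phi;1)$.

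There is essentially no obstacle here beyond bookkeeping — the real work has already been done in Proposition~\ref{prop:clé} (which translates $c_\p$ into a valuation of an explicit element) and in the isometry statement of Proposition~\ref{prop:isom-p-adic-log} (which lets the logarithm pass through valuations transparently). The one subtlety worth a sentence of care is the edge case where $u_\phi(1)$ is a $\phi$-torsion point: then $\phi_a(u_\phi(1)) = 0$, both sides of the claimed identity are $+\infty$, and one should note this is consistent (indeed in that case $L_\p(\phi;1) = 0$ by the class formula, so $v_\p = \infty$ by convention, matching $c_\p = \infty$). Apart from that, the proof is a two-line composition of the class formula with the preceding proposition.
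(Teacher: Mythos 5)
Your proof is correct and matches the paper's argument: both simply compose the $\p$-adic class formula~\eqref{classformula} with Proposition~\ref{prop:clé} (which itself relies on the isometry of Proposition~\ref{prop:isom-p-adic-log}). The paper states this in one line; you have just written out the bookkeeping, including the valid observation about the torsion/$L_\p(\phi;1)=0$ edge case.
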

\begin{proof}
We apply Proposition \ref{prop:clé} to $u_\phi(1)$ and the class formula \eqref{classformula} to obtain the desired equality. The second assertion follows directly.
\end{proof}

\subsubsection{Special value: the non-torsion case}

We recall that the special value $L_\p^*(\phi; 1)$ is defined by dividing the $L$-series $L_\p(\phi; T)$ by the highest possible power of $T{-}1$ and then taking the value at $1$.
Similarly, we write
\[
u_\phi(T) = (T-1)^k \cdot u_{\phi}^*(T)
\]
with $u^*_{\phi}(1)\neq 0$. The exponent $k$ is then the order of vanishing of $u_\phi(T)$ at $T = 1$.

\begin{thm}\label{thm:main}
We assume that $A$ has no $\phi$-torsion.
\begin{enumerate}[label=(\theenumi)]
\item For any place $\p$, the vanishing orders at $T = 1$ of $L_{\p}(\phi;T)$ and $u_\phi(T)$ are the same.\\
  In particular the vanishing order of $L_{\p}(\phi;T)$ does not depend on~$\p$.
\item For any place $\p$ satisfying \ref{Hyp}, we have $v_{\p}(L_{\p}^\ast(\phi;1))=c_{\p}(\phi;u_\phi^*(1))$.
\end{enumerate}
\end{thm}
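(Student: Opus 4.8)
The plan is to combine the $T$-twisted class formula \eqref{classformulaT} with a $T$-adic (or $(T-1)$-adic) version of Proposition~\ref{prop:clé}. The key algebraic input is that $u_\phi(T) \in A[T]$ and that, by the class formula, $L_\p(\phi;T) = \p^{-1}\cdot \logT_{\phi,\p}\,\phiT_{|\phiT(\FF_\p)|}(u_\phi(T))$ inside $K_\p\langle T\rangle$. Since $\logT_{\phi,\p}$ differs from $\p^{-1}\cdot\phiT_{|\phiT(\FF_\p)|}$ composed with the honest (convergent) $T$-twisted logarithm, and the latter is a $\p$-adic isometry on $\mathfrak m_\p\langle T\rangle$ (the $T$-twisted analogue of Proposition~\ref{prop:isom-p-adic-log}), the order of vanishing at $T=1$ of $L_\p(\phi;T)$ equals that of $\phiT_{|\phiT(\FF_\p)|}(u_\phi(T))$, which in turn — because $\phiT_a$ applied to a non-torsion element does not raise the vanishing order (one checks this using that $|\phiT(\FF_\p)|$ has unit leading coefficient and that the relevant map is injective on the non-torsion locus, exactly as in the proof of Proposition~\ref{prop:clé}) — equals the order of vanishing of $u_\phi(T)$ itself. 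This gives part~(1), and independence of $\p$ is then immediate. For part~(1) one must be slightly careful that the hypothesis \ref{Hyp} is \emph{not} available (the statement of~(1) quantifies over all $\p$); here the remark after Proposition~\ref{prop:isom-p-adic-log} saves us, since replacing $\mathfrak m_\p$ by $\mathfrak m_\p^2$ does not change orders of vanishing in $T$.

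For part~(2), I would argue as follows. Write $a := |\phi(\FF_\p)|$ and set $y := u_\phi^*(1) = \big((T-1)^{-k} u_\phi(T)\big)\big|_{T=1} \in A$. From the factorisation $u_\phi(T) = (T-1)^k u_\phi^*(T)$ and the fact (from part~(1), now using \ref{Hyp} for this particular $\p$) that $L_\p(\phi;T) = (T-1)^k L_\p^*(\phi;T)$ with the \emph{same} $k$, the class formula \eqref{classformulaT} reads
\[
(T-1)^k L_\p^*(\phi;T) \;=\; \p^{-1}\cdot \logT_{\phi,\p}\,\phiT_{a(T)}\big((T-1)^k u_\phi^*(T)\big),
\]
where $a(T) := |\phiT(\FF_\p)|$ satisfies $a(1) = a$. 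Since $\phiT_{a(T)}$ is $A[T]$-linear for the twisted action and $\logT_{\phi,\p}$ is $A[T]$-linear on its domain, the right-hand side equals $(T-1)^k\cdot \p^{-1}\cdot \logT_{\phi,\p}\,\phiT_{a(T)}(u_\phi^*(T))$ — here one needs that $(T-1)^k$ can be pulled out, which is legitimate because multiplication by $(T-1)$ commutes with everything in sight and $u_\phi^*(T)$ still lies in the appropriate Tate-algebra domain. Cancelling $(T-1)^k$ and specialising at $T=1$ yields $L_\p^*(\phi;1) = \p^{-1}\cdot \log_{\phi,\p}\,\phi_a(y)$, which is exactly the shape of the classical class formula \eqref{classformula} but with base-point $y$ in place of $u_\phi(1)$. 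Applying Proposition~\ref{prop:clé} to $x = y$ then gives
\[
v_\p\big(L_\p^*(\phi;1)\big) \;=\; v_\p\big(\phi_a(y)\big) - 1 \;=\; c_\p(\phi;y) \;=\; c_\p(\phi;u_\phi^*(1)),
\]
as desired; note $y$ is non-torsion because $A$ has no $\phi$-torsion, so all quantities are finite.

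The main obstacle I anticipate is the careful bookkeeping around part~(1): verifying that the \emph{honest} $T$-twisted logarithm converges and is an isometry on $\mathfrak m_\p\langle T\rangle$ (or $\mathfrak m_\p^2\langle T\rangle$ without \ref{Hyp}), and — more delicately — that passing through $\phiT_{a(T)}$ preserves the $(T-1)$-adic order of vanishing. The latter requires knowing that $\phiT_{a(T)}$, viewed as an $\FF$-linear (indeed additive) operator on the relevant completed module, is ``injective up to the torsion'' in a sense compatible with the $(T-1)$-filtration; the cleanest way is probably to mimic the degree/Fitting-ideal argument of Proposition~\ref{prop:clé} verbatim in the twisted setting, using that $|\phiT(\FF_\p)|$ has unit leading coefficient (as recalled right after the definition of $\logT_{\phi,\p}$) so that $\phiT_{a(T)}$ acts ``invertibly after inverting $\p$'' on the non-torsion part. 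Once this twisted analogue of Proposition~\ref{prop:clé} (or really of the two key facts inside its proof) is in place, both parts follow formally.
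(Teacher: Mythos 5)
Your proposal is correct and follows essentially the same route as the paper: substitute $u_\phi(T)=(T-1)^k u_\phi^*(T)$ into the $T$-twisted class formula, pull $(T-1)^k$ out by linearity, evaluate at $T=1$, use the no-torsion hypothesis (via Lemma~\ref{lem:torsion}) to see the remaining factor is nonzero, and finish with Proposition~\ref{prop:clé}. The isometry/order-preservation detour you sketch for part~(1) is not actually needed — your own part~(2) computation already yields $L_\p^*(\phi;1)=\p^{-1}\log_{\phi,\p}\phi_{|\phi(\FF_\p)|}(u_\phi^*(1))\neq 0$, which is exactly how the paper establishes part~(1).
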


\begin{proof}
Let $k$ denote as before the vanishing order of $u_\phi(T)$ at $T = 1$.
According to the class formula \eqref{classformulaT} we have:
\begin{align}\label{eq:formula-in-T}
L_{\p}(\phi;T)=(T-1)^k\cdot \p^{-1} \cdot \logT_{\phi,\p}\left(\phiT_{|\phiT(\FF_{\p})|}(u_{\phi}^*(T))\right).
\end{align}
Evaluating at $T=1$ and using that $A$ has no $\phi$-torsion, it follows from Lemma~\ref{lem:torsion}
that $\log_{\phi,\p}(\phi_{|\phi(\FF_{\p})|}(u_{\phi}^*(1)))$ is nonzero.
Hence, the vanishing order at $T = 1$ of $L_{\p}(\phi;T)$ is $k$ and the first statement is proved.

Evaluating \eqref{eq:formula-in-T} at $T=1$ results in
\[
L_{\p}^*(\phi;1) = \p^{-1}\cdot\log_{\phi,\p}(\phi_{|\phi(\FF_\p)|}(u_{\phi}^*(1)))
\]
which, after taking $\p$-adic valuations using Proposition~\ref{prop:isom-p-adic-log}, provides
\[
v_{\p}(L^*_{\p}(\phi;1))=v_{\p}(\phi_{|\phi(\FF_\p)|}(u_{\phi}^*(1)))-1.
\]
Proposition~\ref{prop:clé} then gives the annonced equality.
\end{proof}

\subsubsection{Special value: the general case}

For $h \in A$, we consider the twist $h^{-1} \phi h$.
A direct calculation shows that if $\phi_t = t + g_1 \tau + \cdots + g_r \tau^r$, then
\[
(h^{-1} \phi h)_t = t + g_1 h^{q-1} \tau + g_2 h^{q^2 - 1} + \cdots + g_r h^{q^r - 1} \tau^r
\]
and so, that $h^{-1} \phi h$ again defines a Drinfeld model.

Besides, we know from \cite[Lemma 3.7]{ANT} that the local factors of $h^{-1} \phi h$ are those of $\phi$ at the places not dividing $h$ and are trivial otherwise.
At the level of $L$-series, this translates to
\begin{align}
L\big(\phi; T\big) & = L\big(h^{-1}\phi h; T\big) \cdot \prod_{\q | h} \frac{\q}{|\phiT(\FF_\q)|}, \\
L_\p\big(\phi; T\big) & = L_\p\big(h^{-1}\phi h; T\big) \cdot \prod_{\substack{\q | h\\ \q \neq \p}} \frac{\q}{|\phiT(\FF_\q)|}. \label{eq:Lptwist}
\end{align}
This already has quite interesting consequences.

\begin{thm}
\label{thm:independence}
The vanishing order at $T=1$ of the $\p$-adic $L$-series $L_\p(\phi;T)$ is independent from $\p$.
\end{thm}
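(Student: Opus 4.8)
The plan is to reduce the general case to the non-torsion case treated in Theorem~\ref{thm:main}, using the twist $h^{-1}\phi h$ and Equation~\eqref{eq:Lptwist}. First I would choose $h \in A$ nonzero so that $h^{-1}\phi h$ has no $A$-torsion. Such an $h$ exists because the torsion submodule $\phi(A)_{\mathrm{tors}}$ is a finitely generated torsion $A$-module, hence annihilated by some nonzero $a \in A$; after twisting by a suitable $h$ (for instance a power of $a$, or more carefully a generator related to the torsion), all torsion points become $0$. Concretely, one checks that if $\phi_a(x) = 0$ then the corresponding point in the twisted module is $h^{-1} x$ (or $hx$, depending on conventions), so multiplying $h$ by the generator of the torsion ideal kills every torsion point; it may be cleanest to note that a Drinfeld model over $A = \mathbb{F}[t]$ has finite torsion and to take $h$ to be the product of the (monic generators of the) annihilators of the torsion, then argue the twist is torsion-free.

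Next I would invoke Theorem~\ref{thm:main}(1) applied to $\psi := h^{-1}\phi h$: since $A$ has no $\psi$-torsion, the vanishing order at $T = 1$ of $L_\p(\psi; T)$ equals the vanishing order of $u_\psi(T)$ at $T=1$, which is manifestly independent of $\p$. It then remains to compare the vanishing order of $L_\p(\phi; T)$ with that of $L_\p(\psi; T)$. By Equation~\eqref{eq:Lptwist},
\[
L_\p(\phi; T) = L_\p(\psi; T) \cdot \prod_{\substack{\q \mid h \\ \q \neq \p}} \frac{\q}{|\phiT(\FF_\q)|},
\]
and the correcting factor is a finite product of rational functions in $T$ each of the form $\q / |\phiT(\FF_\q)|$. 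Since $|\phiT(\FF_\q)|$ is a polynomial in $T$ with constant term $\q$ and belongs to $\q + T^{\deg\q} A[T]$ (as recorded in the discussion of local $L$-factors), evaluating at $T = 1$ gives $|\phiT(\FF_\q)|_{|T=1} = |\phi(\FF_\q)|$, which is a nonzero element of $A$ — indeed a unit times $\q^{\text{(something)}}$ — in any case nonzero and not vanishing at $T=1$ as a polynomial. Hence each factor $\q/|\phiT(\FF_\q)|$ has neither a zero nor a pole at $T = 1$, so the correcting product does not change the order of vanishing at $T = 1$. Therefore the vanishing order of $L_\p(\phi; T)$ at $T=1$ equals that of $L_\p(\psi; T)$, which is independent of $\p$.

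I would then simply assemble these observations: the vanishing order of $L_\p(\phi;T)$ at $T=1$ equals the vanishing order of $u_\psi(T)$ at $T=1$ for the fixed torsion-free twist $\psi = h^{-1}\phi h$, and the right-hand side does not involve $\p$. The main obstacle I anticipate is the first step — cleanly producing a twist $h^{-1}\phi h$ that is genuinely torsion-free, and verifying that the correspondence between $\phi$-torsion points and $\psi$-torsion points behaves as expected under the twist. One must be a little careful: twisting changes the module structure by $\phi_a \mapsto h^{-1}\phi_a h$ acting on $A$, and one should check that no new torsion is created and that a single well-chosen $h$ suffices. Everything else — the behaviour of the Euler-factor correction at $T=1$ and the appeal to Theorem~\ref{thm:main} — is routine.
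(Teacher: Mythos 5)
Your overall strategy is exactly the paper's: twist by some $h$ to obtain a torsion-free model, apply Theorem~\ref{thm:main}(1) to the twist, and observe that the correcting factors $\q/|\phiT(\FF_\q)|$ in Equation~\eqref{eq:Lptwist} neither vanish nor have a pole at $T=1$. The second and third steps are carried out correctly. The gap is precisely where you anticipated it: the construction of a torsion-free twist. Your argument rests on the finiteness of $\phi(A)_{\tors}$, which is true but is a nontrivial theorem (essentially Poonen's structure theorem for $\phi(L)$ over finitely generated fields) that the paper neither cites nor needs. Worse, even granting finiteness, your proposed choices of $h$ do not do the job. Since $(h^{-1}\phi h)_a(x)=h^{-1}\phi_a(hx)$, an element $x$ is torsion for the twist if and only if $hx$ is $\phi$-torsion, so what you need is $hA\cap\phi(A)_{\tors}=\{0\}$, i.e.\ that $h$ divides no nonzero torsion point of $\phi$ \emph{as a polynomial in $t$}. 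This is a degree/divisibility condition in $A$ and has nothing to do with the annihilator ideal of the torsion module under the $\phi$-action; taking $h$ to be a power of the annihilator, or a product of annihilators, does not ensure it.

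The paper's fix is both simpler and self-contained: choose $h$ with $\deg h>\max_{1\le i\le r}\deg g_i$. Then for every nonzero $a\in A$, the leading coefficient (in $\tau$) of $(h^{-1}\phi h)_a$ has strictly larger $t$-degree than all the other coefficients, so for $x\neq 0$ the top term of $(h^{-1}\phi h)_a(x)=\sum_i c_i x^{q^i}$ strictly dominates in degree and the sum cannot vanish. This shows directly that the twist has no torsion at all, with no appeal to finiteness of $\phi(A)_{\tors}$ and no case analysis on where the torsion points sit. I recommend you replace your first step by this degree argument; the remainder of your proof then goes through verbatim.
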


\begin{proof}
We consider a twisting element $h \in A$ such that $\deg h > \max_{1 \leq i \leq r} \deg g_i$.
A simple computation ensures that, for any $a \in A$, the leading coefficient (with respect to~$\tau$) of $(h^{-1} \phi h)_a$ has $t$-degree strictly larger than all its other coefficients.
It follows that $\phi(A)$ cannot have $a$-torsion. Since this holds for all $a$, we conclude that $A$ has no $\phi$-torsion.

We can then apply Theorem~\ref{thm:main} and deduce that the order of vanishing at $T=1$ of $L_\p\big(h^{-1}\phi h; T\big)$ does not depend on~$\p$.
The theorem follows after remarking that the factors $\frac{\q}{|\phiT(\FF_\q)|}$ appearing in Equation~\eqref{eq:Lptwist} do not vanish at $T = 1$.
\end{proof}

In order to get further information about the special values, it is convenient to focus on the special case where the twisting element $h$ is a power of $\p$.
In this case, we first observe that Equation~\eqref{eq:Lptwist} tells that $L_\p\big(\phi; T\big) = L_\p\big(\p^{-m}\phi \p^m; T\big)$ for all $m$.
It turns out that we have analoguous relations for Taelman units and ordic valuations.

\begin{lem}
\label{lem:uctwist}
For all positive integers $m$ and all $x \in A$, we have
\begin{enumerate}[label=(\theenumi)]
\item $u_{\p^{-m}\phi \p^m}(T) = \p^{-m} \cdot \phiT_{\p^{m-1}|\phiT(\FF_\p)|}(u_\phi(T))$,
\item $c_{\p}(\p^{-m}\phi\p^m;  x) = c_{\p}(\phi; \p^m x) - m$.
\end{enumerate}
\end{lem}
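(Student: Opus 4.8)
The plan is to treat the two statements in turn, deriving (i) from the class formula and the known behaviour of $L$-series under twisting, and then deducing (ii) from (i) together with Proposition~\ref{prop:clé}. For part (i), I would start from the identity $L_\p\big(\phi;T\big) = L_\p\big(\p^{-m}\phi\p^m;T\big)$ already noted (a special case of Equation~\eqref{eq:Lptwist}, since the only place dividing $\p^m$ is $\p$ itself, and that place is excluded from the product). Applying the $T$-twisted class formula \eqref{classformulaT} to the Drinfeld model $\psi := \p^{-m}\phi\p^m$, one gets $L_\p(\psi;T) = \p^{-1}\cdot\logT_{\psi,\p}\,\psi^{\sim}_{|\psi^{\sim}(\FF_\p)|}(u_\psi(T))$. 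The key is to compare the two twisted logarithms and the two Fitting-ideal factors. Since $\psi_t = \p^{-1}\phi_t\p$ on the nose (as twisted polynomials, $\psi_a = \p^{-m}\phi_a\p^m$ for all $a$), one checks that $\expT_\psi = \p^{-m}\cdot\expT_\phi\cdot\p^m$ and $\logT_\psi = \p^{-m}\cdot\logT_\phi\cdot\p^m$ as twisted power series, hence $\logT_{\psi,\p}(y) = \p^{-m}\logT_{\phi,\p}(\p^m y)$ on the relevant domain (after matching up the normalizing factors $|\psi^{\sim}(\FF_\p)|$ versus $|\phi^{\sim}(\FF_\p)|$, using that twisting by a power of $\p$ trivializes the local factor at $\p$, i.e. $|\psi^{\sim}(\FF_\p)| = \p$). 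Putting these together and recalling $u_\psi(T) = \expT_\psi(L(\psi;T))$, one unwinds the chain of identities to isolate $u_\psi(T)$ and read off the claimed formula $u_\psi(T) = \p^{-m}\cdot\phiT_{\p^{m-1}|\phiT(\FF_\p)|}(u_\phi(T))$; the power $\p^{m-1}|\phiT(\FF_\p)|$ should come out as the product of the $m$-th twist normalization $\p^{m-1}$ with the original Fitting factor $|\phiT(\FF_\p)|$.

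For part (ii), I would apply Proposition~\ref{prop:clé} to both sides. The right-hand side is $c_\p(\phi;\p^m x) = v_\p\big(\phi_{|\phi(\FF_\p)|}(\p^m x)\big) - 1$, and since $\phi$ is $A$-linear, $\phi_{|\phi(\FF_\p)|}(\p^m x) = \phi_{\p^m}(\phi_{|\phi(\FF_\p)|}(x))$, whose $\p$-valuation I would compute by comparing with $m\,v_\p(\p) + v_\p\big(\phi_{|\phi(\FF_\p)|}(x)\big)$ — this requires care because $\phi_{\p^m}$ is not multiplication by $\p^m$ but only has constant $\tau$-coefficient $\p^m$; however, since $\phi_{|\phi(\FF_\p)|}(x)\in\p A$ (indeed in $\mathfrak m_\p$), the higher $\tau$-terms contribute $q$-th powers of something already divisible by $\p$, hence are of strictly larger valuation, so the leading term dominates and $v_\p\big(\phi_{\p^m}(y)\big) = m + v_\p(y)$ whenever $v_\p(y)\geq 1$. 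On the left-hand side, $c_\p(\psi;x) = v_\p\big(\psi_{|\psi^{\sim}(\FF_\p)|_{|T=1}}(x)\big) - 1 = v_\p\big(\psi_\p(x)\big) - 1$ using $|\psi^{\sim}(\FF_\p)|_{|T=1} = \p$ from above; and $\psi_\p(x) = \p^{-m}\phi_{\p^{m+1}}(x)$, so $v_\p(\psi_\p(x)) = v_\p\big(\phi_{\p^{m+1}}(x)\big) - m$. Comparing the two expressions, both reduce to $v_\p\big(\phi_{|\phi(\FF_\p)|}(x)\big)$-type quantities shifted by $m$, and the claimed identity $c_\p(\psi;x) = c_\p(\phi;\p^m x) - m$ falls out. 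One should also separately dispose of the torsion case, where both sides are $+\infty$: $x$ is $\psi$-torsion iff $\psi_a(x)=0$ for some $a$ iff $\phi_a(\p^m x)=0$ for some $a$ (since $\psi_a(x) = \p^{-m}\phi_a(\p^m x)$), iff $\p^m x$ is $\phi$-torsion, and then Proposition~\ref{prop:clé}'s torsion clause gives $\infty$ on both sides.

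The main obstacle I anticipate is bookkeeping in part (i): carefully tracking how the normalization factor $1/|\psi^{\sim}(\FF_\p)|$ in the definition of $\logT_{\psi,\p}$ interacts with the conjugation $\p^{-m}(-)\p^m$ applied to $\logT_\phi$, and verifying that the domain condition ($\psi^{\sim}_{|\psi^{\sim}(\FF_\p)|}(u_\psi(T))$ lands where the raw series $\logT_\psi$ converges) is genuinely satisfied so that the manipulations are legitimate. In particular one must be careful that $|\psi^{\sim}(\FF_\p)| = \p$ exactly (not merely up to a unit), which follows from the cited \cite[Lemma 3.7]{ANT} applied with $h$ a power of $\p$, and that $\psi^{\sim}_{\p}(u_\psi(T)) \in \mathfrak m_\p\langle T\rangle$. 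The remaining computations — the conjugation formulas for $\expT$ and $\logT$, and the valuation estimate $v_\p(\phi_{\p^m}(y)) = m + v_\p(y)$ for $v_\p(y)\geq 1$ — are routine once the framework is set up, and can be checked by a direct expansion using the commutation rule $\tau c = c^q\tau$.
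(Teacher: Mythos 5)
Your plan for part (i) is workable in principle, but it takes a noticeably longer route than the paper's. You propose to equate the two $\p$-adic class formulas for $L_\p(\phi;T)=L_\p(\psi;T)$ (with $\psi := \p^{-m}\phi\p^m$), push the twisted $\p$-adic logarithms through the conjugation relation, and then ``isolate'' $u_\psi(T)$ — which implicitly requires proving an injectivity statement for $\logT_{\phi,\p}$ before you can invert. The paper avoids all of this by working with the global $T$-twisted class formula directly: $u_\psi(T) = \expT_\psi(L(\psi;T))$, then $\expT_\psi = \p^{-m}\expT_\phi\p^m$, then $L(\psi;T) = \p^{-1}|\phiT(\FF_\p)|\cdot L(\phi;T)$, and finally the functional equation $\expT_\phi(ay) = \phiT_a(\expT_\phi(y))$. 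Three lines, no inversion, no $\p$-adic estimates.

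For part (ii) there is a genuine error, and it is the same mistake in two places: you conflate the ordinary product $\p^m x$ with the Drinfeld action $\phi_{\p^m}(x)$. Writing $\phi_{|\phi(\FF_\p)|}(\p^m x) = \phi_{\p^m}\bigl(\phi_{|\phi(\FF_\p)|}(x)\bigr)$ asserts $\phi_a(bx) = \phi_{ab}(x)$, which is false — the $A$-linearity of $\phi$ means $\phi_a(\phi_b(x)) = \phi_{ab}(x)$, not $\phi_a(bx) = \phi_{ab}(x)$; and similarly $\psi_\p(x) = \p^{-m}\phi_\p(\p^m x)$, where $\p^m x$ is ordinary multiplication, which is \emph{not} $\p^{-m}\phi_{\p^{m+1}}(x)$. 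After correcting these, the two sides of your valuation identity do not match as you claim (they differ by a nontrivial amount in general), so the computation would have to be redone from scratch. There is a second, structural issue: by routing through Proposition~\ref{prop:clé} you smuggle in hypothesis~\ref{Hyp}, which the lemma does not assume. The paper's proof of (ii) avoids both problems entirely: from $\psi_a(x) = \p^{-m}\phi_a(\p^m x)$ one gets the clean identity $\pi_x(\psi;\p^k) = \pi_{\p^m x}(\phi;\p^{m+k})$ for all $k \geq 0$, and the claimed shift of the ordic valuation falls out immediately from the definitions, with no valuations of exponentials, no Fitting ideals, and no restriction on~$\p$.
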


\begin{proof}
To simplify notation, we set $\psi := \p^{-m}\phi \p^m$ throughout the proof.
The first formula follows from the next computation:
\begin{align*}
u_{\psi}(T)
 & = \p^{-m}\cdot\expT_{\phi} \big(\p^m \cdot L(\psi; T)\big) \\
 & = \p^{-m}\cdot\expT_{\phi} \big(\p^{m-1} \cdot |\phiT(\FF_\p)| \cdot L({\phi};T)\big) \\
 & = \p^{-m}\cdot\phiT_{\p^{m-1}|\phiT(\FF_\p)|}(u_\phi(T)).
\end{align*}
For the second formula, we remark first that for $k \leq m$, we have $\pi_{\p^mx}(\phi;\p^k) = A$; hence $c_{\p}(\phi;\p^m x)\geq m$.
Moreover, for any $k \geq 0$, the equality $\psi_a(x) = \p^{-m} \cdot \phi_a(\p^m x)$ (which is correct for all $a \in A$) shows that $\pi_x(\psi; \p^k) =\pi_{\p^m x}(\phi;\p^{m+k})$.
The conclusion follows.
\end{proof}

\begin{thm}
\label{thm:vpLp}
Let $\p$ be a place satisyfing \ref{Hyp}.
For a positive integer $m$, we let $Q_m(T)$ be the polynomial defined by
\[
\phiT_{\p^{m-1} |\phiT(\FF_\p)|} \big(u_\phi(T)\big)=(T-1)^{k_m} \cdot Q_m(T), \quad Q_m(1) \neq 0.
\]
Then, for $m$ large enough, we have $v_{\p}\big(L_{\p}^*(\phi;1)\big)=c_{\p}\big(\phi;Q_{m}(1)\big)-m$.
\end{thm}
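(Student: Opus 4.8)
The plan is to deduce the statement from the non-torsion case, Theorem~\ref{thm:main}, by twisting $\phi$ by the power $\p^m$ of $\p$; the point is that twisting by a power of $\p$ leaves $L_\p$ untouched while killing any $\phi$-torsion once $m$ is large. Set $\psi := \p^{-m}\phi\p^m$. As recorded just before Lemma~\ref{lem:uctwist}, Equation~\eqref{eq:Lptwist} with $h=\p^m$ gives $L_\p(\phi;T) = L_\p(\psi;T)$ — the correcting product is empty, since $\p$ is the only prime dividing $\p^m$ and it is precisely the one excluded — and therefore also $L_\p^*(\phi;1) = L_\p^*(\psi;1)$ (same power of $T-1$ factored out). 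So it suffices to compute $v_\p\big(L_\p^*(\psi;1)\big)$.

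First I would fix $m$ large enough that $m\deg\p > \max_{1\le i\le r}\deg g_i$. Then, by the very argument used in the proof of Theorem~\ref{thm:independence} (with twisting element $\p^m$), the Drinfeld model $\psi$ has no $A$-torsion. Since $\p$ still satisfies \ref{Hyp}, Theorem~\ref{thm:main}(2) applies to $\psi$ and yields
\[
v_\p\big(L_\p^*(\psi;1)\big) = c_\p\big(\psi;\,u_\psi^*(1)\big).
\]
Next I would pin down $u_\psi^*(1)$ in terms of $Q_m$. By Lemma~\ref{lem:uctwist}(i) and the definition of $Q_m$,
\[
u_\psi(T) = \p^{-m}\cdot \phiT_{\p^{m-1}|\phiT(\FF_\p)|}\big(u_\phi(T)\big) = \p^{-m}(T-1)^{k_m}Q_m(T),
\]
so the order of vanishing of $u_\psi$ at $T=1$ is exactly $k_m$ and hence $u_\psi^*(T) = \p^{-m}Q_m(T)$, giving $u_\psi^*(1) = \p^{-m}Q_m(1)$. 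Because $u_\psi(T)\in A[T]$ by the $T$-twisted class formula and $(T-1)^{k_m}$ is primitive, Gauss's lemma shows $u_\psi^*(T)\in A[T]$; in particular $x := \p^{-m}Q_m(1)$ lies in $A$ (equivalently, $\p^m \mid Q_m(1)$ in $A$), so that $c_\p(\phi;-)$ is defined on it.

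Finally I would apply Lemma~\ref{lem:uctwist}(ii) with this $x$, noting $\p^m x = Q_m(1)$:
\[
c_\p\big(\psi;x\big) = c_\p\big(\phi;\p^m x\big) - m = c_\p\big(\phi;Q_m(1)\big) - m.
\]
Chaining the three displayed equalities gives $v_\p\big(L_\p^*(\phi;1)\big) = c_\p\big(\phi;Q_m(1)\big) - m$, as claimed. The only real care needed — there is no new analytic input beyond Theorems~\ref{thm:main} and~\ref{thm:independence} — is the bookkeeping: checking that $L_\p^*$ and $u^*$ behave well under the $\p^m$-twist, that $u_\psi^*(1)$ genuinely lands in $A$, and that ``$m$ large enough'' is exactly the condition making $\psi$ torsion-free so that Theorem~\ref{thm:main} is available.
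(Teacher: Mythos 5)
Your proposal is correct and follows essentially the same route as the paper: twist by $\p^m$, observe $L_\p$ is unchanged, use Lemma~\ref{lem:uctwist} to identify $u_\psi^*$ and transport $c_\p$, invoke Theorem~\ref{thm:main} once $m$ is large enough to kill torsion. The extra check that $u_\psi^*(1)\in A$ via monic division by $(T-1)^{k_m}$ is a welcome detail the paper leaves implicit.
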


\begin{proof}
We set $\psi := \p^{-m} \phi \p^m$. Lemma~\ref{lem:uctwist} gives the relation
$$u_{\psi}(T) = \p^{-m} \cdot (T-1)^{k_m} \cdot Q_m(T)$$
from which we deduce that $k_m$ is the order of vanishing of $u_\psi(T)$ at $T = 1$ and that $u_\psi^*(T) = \p^{-m} \cdot Q_m(T)$.
On the other hand, we know from the first part of the proof of Theorem~\ref{thm:independence} that $\psi$ has no torsion when $m$ is large enough.
We can then apply Theorem~\ref{thm:main} and get
$$v_{\p}\big(L_{\p}^*(\psi,1)\big) = c_{\p}\big(\psi; \p^{-m} Q_m(1)\big).$$
Now we conclude by observing that the left hand side in the above equality is equal to $v_{\p}(L_{\p}^*(\phi;1))$ thanks to Equation~\eqref{eq:Lptwist}
while the right hand side is $c_{\p}\big(\phi; Q_m(1)\big) - m$ by Lemma~\ref{lem:uctwist}.
\end{proof}

\subsection{The smallness condition}\label{sec:small}

Previously we have proved that the $\p$-adic valuation of the special value of the $\p$-adic $L$-series is related to the ordic valuation of the special element $u_\phi(1)$ or one of its analogue.
In this last subsection, we compute this element $u_\phi(1)$ under some additional hypothesis on $\phi$; this will eventually gives a complete proof of Theorem~\ref{thm:smallWieferich} of the introduction.

We start by recalling the definition of smallness.

\begin{deftn}
\label{def:small}
Write $\phi_t=t+g_1\tau+\ldots+g_r\tau^r$.
We say that $\phi$ is \emph{small} (resp. \emph{very small}) if $\deg_{t} g_i\leq q^i$ (resp. $\deg_{t} g_i < q^i$) for all $i\in \{1,\ldots,r\}$.
\end{deftn}

\begin{prop}
\label{prop:small}
We assume that $\phi$ is small and, for $i \in \{1, \ldots, r\}$, we let $\alpha_i$ denote the coefficient of $g_i$ in front of $t^{q^i}$.
Then
$$u_\phi(T) = 1 + \sum_{i=1}^r \alpha_i T^i \in \FF[T].$$
In particular, if $\phi$ is very small, we have $u_\phi(T) = 1$.
\end{prop}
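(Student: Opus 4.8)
The plan is to compute $u_\phi(T) = \expT_\phi(L(\phi;T))$ directly, by controlling the $\infty$-adic sizes of the coefficients $e_i$ of $\exp_\phi$ and of the coefficients of $L(\phi;T)$ under the smallness hypothesis, and then showing that the only surviving terms of $\expT_\phi(L(\phi;T))$ are the "leading" ones, which reassemble into $1 + \sum_i \alpha_i T^i$. Concretely, I would first recall from the $T$-twisted class formula that $u_\phi(T)\in A[T]$, so it suffices to pin down finitely many polynomial coefficients; since $u_\phi(T) = \sum_{n\geq 0} e_n T^n \cdot (\text{power of }\tau\text{ applied to }L(\phi;T))$ — more precisely $\expT_\phi(f) = \sum_n e_n T^n f^{(n)}$ where $f^{(n)}$ denotes the $n$-fold $q$-power twist — I would extract the coefficient of each power of $T$ and bound its $\infty$-adic valuation (i.e. $-\deg_t$).

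The key estimates are: (a) from $\phi_t \exp_\phi = \exp_\phi t$ one gets the recursion $(t^{q^n} - t)e_n = \sum_{i=1}^{\min(r,n)} g_i e_{n-i}^{(i)}$, where $(\cdot)^{(i)}$ raises $t$-coefficients to the $q^i$; smallness $\deg_t g_i \leq q^i$ forces $\deg_t e_n \leq q^n - q^{n-1} + \deg_t e_{n-1} \cdot q$-type bounds — I would prove by induction the sharp statement that $\deg_t e_n \leq \frac{q^n-1}{q-1}\cdot(\text{something})$, or more usefully that $e_n$ has $t$-degree small enough that $v_\infty(e_n)$ is controlled; and (b) from \cite[Lemma~2.2]{AT} together with the product formula for $L(\phi;T)$, the coefficient of $T^n$ in $L(\phi;T)$ lies in the closed $\infty$-adic unit disk, with a decay governed again by smallness. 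Combining (a) and (b), the coefficient of $T^n$ in $u_\phi(T) = \expT_\phi(L(\phi;T))$ has strictly negative $\infty$-adic valuation unless $n \leq r$, in which case a borderline computation extracts exactly $e_0 \cdot [\text{coeff of }T^0\text{ in }L] = 1$ for $n=0$ and, for $1\leq n\leq r$, the dominant contribution is $e_0 \cdot (\text{leading part of the }T^n\text{ coefficient of }L)$, which I claim equals $\alpha_n$ — this last identification should follow by reading off the $T^n$-coefficient of $\prod_\p P_\p(\phi;T)^{-1}$ via $P_\p(\phi;T) = \p^{-1}|\phiT(\FF_\p)|$ and the resolution~\eqref{eq:resolutionphiT}, noting that the top-degree-in-$t$ part of $|\phiT(\FF_\p)|$ is governed by the leading coefficients $g_r$ but the relevant $T$-coefficient picks out $\alpha_i$. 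Since $u_\phi(T)\in A[T]$, "strictly negative $v_\infty$" forces the coefficient to be $0$, so $u_\phi(T) = 1 + \sum_{i=1}^r \alpha_i T^i$, and the very small case is the special case where every $\alpha_i = 0$.

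The main obstacle I anticipate is the bookkeeping in step (a)–(b): getting the $\infty$-adic bounds sharp enough that the threshold falls exactly at $n = r$ (not $n = r-1$ or $n = 2r$), and correctly identifying the borderline coefficient as precisely $\alpha_n$ rather than some more complicated polynomial in the $\alpha_i$'s and lower $e_j$'s. In particular I would need to verify that the cross-terms $e_j T^j f^{(j)}$ with $1 \leq j$ contribute nothing to the $T^n$-coefficient at the borderline $t$-degree — this is where the strict inequalities in "very small" versus the non-strict ones in "small" make the difference, and where the hypothesis~\ref{Hyp} (or rather its absence here — note Proposition~\ref{prop:small} is stated with no restriction on $q$) must be handled with care. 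A cleaner alternative, which I would try first, is to invoke Theorem~\ref{thm:compLseries} of Appendix~\ref{app:anderson} to reinterpret $u_\phi(T)$ as an Anderson-type object and quote the relevant finiteness/degree bound from \cite{caruso-gazda} or \cite{ANT} directly, reducing the whole computation to a single explicit evaluation.
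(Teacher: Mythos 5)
Your overall strategy --- control $\infty$-adic sizes of the $e_n$ via the recursion, combine with $\infty$-adic bounds on $L(\phi;T)$, and exploit $u_\phi(T)\in A[T]$ to conclude --- is in fact the paper's strategy, but your execution contains a genuine gap: you never pin down the correct bound to prove, and the one you gesture at ($\deg_t e_n$ governed by something like $(q^n-1)/(q-1)$) is not what makes the argument close. The sharp statement, provable by a short induction from the recursion $e_n = (t^{q^n}-t)^{-1}\sum_{i} g_i\, e_{n-i}^{q^i}$, is that $v_\infty(e_n)\geq 0$ \emph{and} $e_n\equiv\alpha_n\pmod{\pi}$ where $\pi = 1/t$. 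Smallness gives $v_\infty\big(g_i e_{n-i}^{q^i}/(t^{q^n}-t)\big) \geq q^n - q^i \geq 0$, with equality possible only for $i = n$, which is where $\alpha_n$ comes from. You also mis-attribute the $L$-series estimate: the statement $v_\infty\big(L(\phi;T)\big)=0$ and $L(\phi;T)\equiv 1\pmod\pi$ is from \cite[Lemma~2.2]{AT} and holds for \emph{any} Drinfeld model; it has nothing to do with smallness, which only enters through the $e_n$.

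Once these two facts are formulated correctly, the ``bookkeeping'' and ``cross-term'' anxieties you anticipate dissolve. In $u_\phi(T)=\sum_{n\geq 0} e_n T^n\,\tau^n\big(L(\phi;T)\big)$, reducing each summand modulo $\pi$ gives $\alpha_n T^n$ (since $\tau^n(L)\equiv 1$ and $e_n\equiv\alpha_n$), so $u_\phi(T)\equiv\sum_n\alpha_n T^n\pmod\pi$; and since $u_\phi(T)\in A[T]$ with $v_\infty\geq 0$ coefficients, those coefficients lie in $A\cap\cO_\infty=\FF$, forcing equality with the residue. There is no ``threshold at $n=r$'' to chase: the $T$-degree bound falls out because $\alpha_n=0$ for $n>r$. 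As for your proposed shortcut through Theorem~\ref{thm:compLseries}: that comparison result only tells you the two $L$-series coincide; it gives no information on the precise polynomial $u_\phi(T)$, so it cannot replace the explicit computation.
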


\begin{proof}
We let $(e_i)_{i\geq 0}$ denote the sequence of coefficients of $\exp_\phi$, namely
$$\exp_\phi = e_0 + e_1 \tau + e_2 \tau^2 + \cdots + e_n \tau^n + \cdots \in K\{\!\{\tau\}\!\}.$$
We also complete the sequence $(\alpha_i)$ by letting $\alpha_i = 0$ for $i = 0$ and $i > r$.

We first prove, by induction on $n$, that $v_\infty(e_n) \geq 0$ and $e_n \equiv \alpha_n \pmod{\pi}$ where $\pi := 1/t$ is the uniformizer at infinity.
The formula clearly holds for $n = 0$ since $e_0 = \alpha_0 = 1$.
Let us assume that it holds for all $i < n$. Then, from the induction formula for the coefficients of the exponential, we get
\begin{equation}
\label{eq:dn}
e_n = \frac{1}{t^{q^n}-t} \cdot \sum_{i=1}^{\min(r,n)} g_i e_{n-i}^{q^i}.
\end{equation}
For $1 \leq i \leq \min(r,n)$, we have
$$v_\infty\left(\frac{g_i e_{n-i}^{q^i}}{t^{q^n}-t}\right) = q^n - \deg g_i + q^i v_\infty(e_{n-i})\geq q^n-q^i\geq 0.$$
Thus $v_\infty(e_n) \geq 0$. 
The previous computation shows also that all the summands in the right hand side of Equation~\eqref{eq:dn} have positive valuation, expect maybe the one corresponding to $i = n$.
Therefore, reducing Equation~\eqref{eq:dn} modulo $\pi$, we get $e_n \equiv \alpha_n \pmod{\pi}$ as claimed.

We now write
\begin{equation}
\label{eq:uphi}
u_\phi(T)=\expT_{\phi}\big(L(\phi;T)\big)=\sum_{n\geq 0} e_n T^n \cdot \tau^n\big(L(\phi;T)\big).
\end{equation}
We know from~\cite[Lemma~2.2]{AT} that $v_\infty\big(L(\phi;T)\big) = 0$ and $L(\phi;T) \equiv 1 \pmod \pi$; hence reducing Equation~\eqref{eq:uphi} modulo $\pi$ yields
$$u_\phi(T) \equiv \sum_{n\geq 0} \alpha_n T^n \pmod \pi.$$
Finally, given that $\expT_{\phi} \big(L(\phi;T)\big)\in A[T]$ according to \cite[Proposition 3.2]{AT}, we obtain the result.
\end{proof}

\begin{rem}
When $\phi$ is very small, one can prove in addition that $L(\phi;T)=\logT_{\phi}(1)$.
Indeed, according to \cite[Proposition 3.2]{AT}, we have $\logT_{\phi}(1)=a\cdot L({\phi};T)$ for a nonzero $a\in A[T]$.
By taking the exponential, we obtain $1=\phiT_a(u_{\phi}(T))$ and writing $\phiT_a=\sum_{i=0}^s a_i T^i\tau^i$, we find
$$0 = \deg_T \phiT_a(u_\phi(T))=s+\deg_T u_\phi(T).$$
Thus $s=0$, and so $a\in \FF^\times$. Looking at the constant coefficient, we finally derive $a = 1$ and so $\logT_{\phi}(1) = L({\phi};T)$ as claimed.
\end{rem}

We now have all the ingredients to complete the proof of Theorem~\ref{thm:smallWieferich}.

Let us first assume that $\phi$ is very small. Then $u_\phi(T) = 1$ by Proposition~\ref{prop:small} and so $u_\phi(1) = 1$.
In particular, $u_\phi(T)$ does not vanish at $T = 1$. The first part of Theorem~\ref{thm:main} then ensures that $L_\p(\phi; 1)$ does not vanish at $T = 1$ as well, \emph{i.e.} $L_{\p}(\phi;1) = L_{\p}^*(\phi;1)$.
The second part of Theorem~\ref{thm:main} now gives the desired equality: $v_{\p}(L_{\p}(\phi;1)) = c_{\p}(\phi;1)$ for all places $\p$ satisfying the hypothesis~\ref{Hyp}.

We now assume that $\phi$ is small and has no torsion point.
Applying again Proposition~\ref{prop:small}, we find that $u_\phi(1)$ lies in $\FF[T]$, from what it follows that $u_\phi^*(1) \in \FF^\times$.
From Remark~\ref{rem:ind-Fx}, we deduce that $c_\p(\phi; u_\phi^*(1)) = c_\p(\phi; 1)$ and
Theorem~\ref{thm:main} shows finally that $v_{\p}(L_{\p}^*(\phi;1)) = c_{\p}(\phi;1)$ for all places $\p$ satisfying the hypothesis~\ref{Hyp}.

\section{Statistics on Wieferich primes}\label{sec:statistics}

In this section, we adopt a probabilistic viewpoint. Our
objective is to give credit to the naive expectation that
a place $\p$ is Wieferich with probability $q^{-\deg\p}$,
supporting eventually the fact that a given Drinfeld model
admits an infinite number of Wieferich places; indeed, the
number of places of degree $d$ is roughly $q^d/d$ and
$$\sum_{d=1}^\infty \frac{q^d} d \cdot q^{-d} =
  \sum_{d=1}^\infty \frac 1 d = +\infty.$$
It is unclear to us if this heuristic is reasonable for a
single Drinfeld model. However, in what follows, we shall
prove that it is somehow valid when we average over larger
universes.

Precisely, we fix a positive integer $r$ and we let $\Omega_r$
denote the set of all small Drinfeld models $\phi : A \to
A\{\tau\}$ of rank at most $r$. It is a finite set, and we
equip it with the uniform distribution. Given in addition
a place $\p$ of $A$, we consider the Bernoulli variable 
$$\begin{array}{rcl}
W_{r,\p} : \quad \Omega_r & \longrightarrow & \{0, 1\} 
\smallskip \\
\phi & \mapsto & 
   0 \quad \text{if $\p$ is $\phi$-Wieferich in base $1$} \\
&& 1 \quad \text{otherwise}.
\end{array}$$
Our objective is to study those random variables and their
relationships.

\begin{rem}
To define our universe, we retained the property of being small 
because it looks quite meaningful regarding our purpose after 
the results of Section~\ref{sec:small}. However, most of the 
results we shall prove in this section are not strongly
dependant on this choice, and will continue to hold true
for many other families of universes, as soon as they 
eventually allow for arbitrary large ranks and arbitrary
large degrees in the coefficients of the Drinfeld models.
\end{rem}

\subsection{The case of places of degree $1$}

To start with, we consider the case where $\p$ is a place
of degree~$1$. In this situation, we have a simple criterion
for recognizing when a place is Wieferich. Before stating it,
we recall that if $F = f_0 + f_1 \tau + \cdots + f_n \tau^n
\in A\{\tau\}$ is a polynomial in $\tau$ and if $x \in A$, we 
write
$$F(x) = f_0 x + f_1 x^q + \cdots  + f_n x^{q^n}.$$
It is an element of $A$, \emph{i.e.} a polynomial in $t$ 
over $\Fq$, and we will denote by $\frac {df(x)}{dt}$ its
derivative with respect to~$t$. A direct computation shows
that
$$\frac{dF(x)}{dt} = 
  f'_0 x + f_0 x' + f'_1 x^q + \cdots  + f'_n x^{q^n}.$$

\begin{prop}
\label{prop:wieferichdeg1}
Let $\phi : A \to A\{\tau\}$ be a Drinfeld model. Let also
$\alpha \in \Fq$ and $x \in A$, $x \neq 0$.
Then the place $\p = t - \alpha$ is $\phi$-Wieferich in base
$x$ if and only if $\frac {d\phi_t(x)}{dt} (\alpha) = 0$.
\end{prop}

\begin{proof}
Set $f = \phi_t(x)$ and write $\beta = f(\alpha)$.
We observe that
$$\phi_{t-\beta}(x) = f - \beta 
  \equiv \beta - \beta = 0 \pmod{\p}$$
which shows that $\pi_x(\phi; \p)$ is the principal ideal generated 
by $t{-}\beta$. It follows that $\p$ is $\phi$-Wieferich in base $x$ 
if and only if $\p^2$ divides $f - \beta$. Besides, using 
Taylor expansion, we get the congruence
$f \equiv \beta + \p f'(\alpha) \pmod{\p^2}$,
from what we conclude that $\p$ is $\phi$-Wieferich in base $x$
if and only if $f'(\alpha)$ vanishes.
\end{proof}

\begin{cor}
\label{cor:proba:deg1}
Let $r$ be a positive integer.
\begin{enumerate}[label=(\roman{enumi})]
\item 
\label{item:proba:parameter}
For any place $\p$ of degree $1$,
the Bernoulli variable $W_{r,\p}$ takes the value $1$ with
probability $q^{-1}$.
\item
\label{item:proba:independence}
The variables $W_{r,\p}$ are mutually independent
when $\p$ runs over the set of places of degree~$1$.
\end{enumerate}
\end{cor}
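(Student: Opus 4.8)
The plan is to deduce Corollary~\ref{cor:proba:deg1} from Proposition~\ref{prop:wieferichdeg1} by a direct counting argument. Fix the base point $x = 1$, so that $\phi_t(x) = \phi_t(1) = t + g_1 + g_2 + \cdots + g_r$ where $\phi_t = t + g_1\tau + \cdots + g_r\tau^r$. By Proposition~\ref{prop:wieferichdeg1}, the place $\p = t - \alpha$ fails to be $\phi$-Wieferich in base $1$ (i.e.\ $W_{r,\p}(\phi) = 1$) precisely when $\frac{d\phi_t(1)}{dt}(\alpha) \neq 0$. Set $P(t) := \phi_t(1) = t + \sum_{i=1}^r g_i \in A$; this is a polynomial of degree at most $\max(1, \max_i \deg g_i)$, and since $\phi$ is small we have $\deg g_i \le q^i$, so $\deg P \le q^r$. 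The first step is therefore to understand, for $\phi$ uniform over $\Omega_r$, the distribution of the values $P'(\alpha)$ for $\alpha \in \Fq$, and more precisely the joint distribution of the vector $\big(P'(\alpha)\big)_{\alpha \in \Fq}$.

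First I would parametrize $\Omega_r$ concretely: a small Drinfeld model of rank at most $r$ is determined by the tuple $(g_1, \ldots, g_r)$ with $\deg_t g_i \le q^i$ for each $i$ (allowing $g_r$ to vanish to accommodate ranks $< r$; one must be slightly careful that $g_r \neq 0$ is not required but the leading condition on $\phi_t$ only forces the genuine rank, which is harmless here). The key observation is that the map sending $(g_1, \ldots, g_r)$ to the coefficient list of $P = t + \sum_i g_i$ is surjective onto polynomials of degree $\le q^r$ with the $t$-coefficient shifted by $1$; in particular, writing $P = \sum_{j=0}^{q^r} c_j t^j$, the coefficients $c_j$ for $j \neq 1$ range uniformly and independently over $\Fq$ as $\phi$ varies over $\Omega_r$ (each $c_j$ with $q^{i-1} < j \le q^i$ being a coefficient of $g_i$, and the fibers all having equal size), while $c_1 = 1 + (\text{coefficient of } t \text{ in } g_1)$ is also uniform. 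So \emph{the coefficients $(c_0, \ldots, c_{q^r})$ of $P$ are i.i.d.\ uniform on $\Fq$}. Then $P'(t) = \sum_{j\ge 1} j c_j t^{j-1}$, and evaluating at the $q$ distinct points of $\Fq$ gives a linear map $(c_1, \ldots, c_{q^r}) \mapsto \big(P'(\alpha)\big)_{\alpha \in \Fq}$. I would show this linear map is surjective (equivalently, that the $q$ evaluation functionals $c \mapsto P'(\alpha)$ are linearly independent over $\Fq$): since $q^r \ge q$, the relevant $q \times q^r$ matrix contains, after the chain rule normalization, a Vandermonde-type block in the distinct points $\alpha \in \Fq$, hence has full row rank $q$. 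Consequently the vector $\big(P'(\alpha)\big)_{\alpha}$ is uniformly distributed on $\Fq^{\,q}$, so its components are i.i.d.\ uniform; each is nonzero with probability $(q-1)/q$, giving $\Pr[W_{r,\p} = 1] = (q-1)/q \cdot$ — wait, I should state it as: $W_{r,\p} = 1$ iff $P'(\alpha) \neq 0$, which happens with probability $1 - q^{-1}$; but the corollary claims probability $q^{-1}$ for the value $1$.

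Here I realize the statement to be matched is that $W_{r,\p}$ takes value $1$ with probability $q^{-1}$, i.e.\ that $\p$ is \emph{non}-Wieferich (value $1$) with probability $q^{-1}$ — so the event $\{P'(\alpha) = 0\}$, which has probability $q^{-1}$, must be the event $W_{r,\p} = 1$; I would double-check the sign convention in the definition of $W_{r,\p}$ (it assigns $0$ when Wieferich, $1$ otherwise) against the heuristic "$\p$ Wieferich with probability $q^{-d}$" and reconcile accordingly — most likely the correct reading is that being Wieferich corresponds to $P'(\alpha)=0$, probability $q^{-1}$, so $W_{r,\p}=0$ with probability $q^{-1}$, and the "value $1$" in the corollary should be "value $0$", or the definition reads the other way; in any case the computation above yields the probability $q^{-1}$ for the Wieferich event and $1-q^{-1}$ for its complement, and I would phrase the final answer to agree with whichever is correct. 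For part~\ref{item:proba:independence}, mutual independence of $\{W_{r,\p}\}_{\deg \p = 1}$ is immediate from the same computation: the places of degree $1$ are exactly the $t - \alpha$ for $\alpha \in \Fq$, there are $q$ of them, and we showed the full vector $\big(P'(\alpha)\big)_{\alpha \in \Fq} \in \Fq^{\,q}$ is uniformly distributed, hence has independent coordinates; each $W_{r,\p}$ is a (deterministic) function of the single coordinate $P'(\alpha)$, so these are mutually independent. The main obstacle is the full-rank/surjectivity claim for the evaluation map — it is the only place where the smallness hypothesis and the inequality $q^r \ge q$ genuinely enter — but given $r \ge 1$ it reduces to nonvanishing of a Vandermonde determinant over $\Fq$ in the distinct arguments $\alpha$, combined with the observation that the chain-rule factors $j \bmod p$ appearing do not all vanish in the range $1 \le j \le q^r$; this is a short finite-field linear-algebra verification rather than a real difficulty.
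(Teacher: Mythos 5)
Your approach is the same as the paper's: reduce via Proposition~\ref{prop:wieferichdeg1} to the joint distribution of the vector $\bigl(P'(\alpha)\bigr)_{\alpha\in\Fq}$ where $P = \phi_t(1)$, argue that the coefficients of $P$ are i.i.d.\ uniform, and conclude by surjectivity of a linear evaluation map. You are in fact more careful than the written proof on one point: the paper invokes surjectivity of the plain evaluation map $f\mapsto\bigl(f(\alpha)\bigr)_\alpha$ by Lagrange interpolation without ever composing with the derivative, which as written does not directly yield what is needed. Your reconciliation of the sign convention in the definition of $W_{r,\p}$ is also right: the displayed definition assigns $0$ to Wieferich, but every other occurrence in the paper makes clear that $1$ is intended.

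The genuine gap is the surjectivity of $(c_1,\dots,c_{q^r})\mapsto\bigl(P'(\alpha)\bigr)_\alpha$. The matrix has entries $j\alpha^{j-1}$, and the columns with $p\mid j$ are identically zero; equivalently, the image of the derivative $A_{\leq q^r}\to A_{\leq q^r-1}$ consists of polynomials whose coefficients in degrees $\equiv -1\pmod p$ vanish. For $r=1$ this image sits inside $A_{\leq q-1}$ with dimension $q-q/p < q$, and since evaluation on $A_{\leq q-1}$ is bijective onto $\Fq^{\Fq}$, the vector $\bigl(P'(\alpha)\bigr)_\alpha$ is confined to a proper subspace: part~(ii) in fact fails at $r=1$. (Concretely, for $q=2$, $r=1$, $P'$ is constant in $t$, so $W_{1,t}=W_{1,t-1}$; for $q=p=3$, $r=1$, one has $P'(0)+P'(1)+P'(2)=0$ identically.) For $r\geq 2$ the argument does go through, but the reason is not the inequality $q^r\geq q$ you invoke: one needs $q^r\geq 2q-1$, so that for every $e\in\{0,\dots,q-1\}$ one can choose an exponent $j\in\{e+1,\,e+q\}$ with $p\nmid j$ (possible because $\gcd(q-1,p)=1$), and since $\alpha^{j-1}=\alpha^{e}$ for all $\alpha\in\Fq$ this produces a genuine scaled Vandermonde block. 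Thus the ``Vandermonde plus chain-rule factors that do not all vanish'' step is exactly where the argument requires care and an explicit degree bound, and it is exactly the step that breaks at $r=1$.
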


\begin{proof}
Proposition~\ref{prop:wieferichdeg1} tells us that the places of
degree $1$ which are $\phi$-Wieferich in base $x$ and in one-to-one
correspondence with the roots in $\Fq$ of the polynomial 
$\frac {d\phi_t(x)}{dt}$. When $\phi$ varies in $\Omega_r$, the
polynomial $\phi_t(x)$ is uniformly distributed in the space 
$A_{\leq q^r}$ consisting of polynomials over $\Fq$ of degree at
most $q^r$.
We now conclude by noticing that the map
$$\begin{array}{rcl}
A_{\leq q^r} & \longrightarrow & \Fq^{\Fq} \\
f & \mapsto & \big(f(\alpha)\big)_{\alpha \in \Fq}
\end{array}$$
is $\Fq$-linear and surjective, thanks to Lagrange interpolation.
\end{proof}

\subsection{The probability of being a Wieferich prime}
\label{ssec:proba}

We now aim at extending Corollary~\ref{cor:proba:deg1} to
places of higher degrees. We start by determining the parameters
of the Bernoulli variables $W_{r,\p}$ at least when the rank is
large compared to the degree of $\p$.

\begin{thm}
\label{th:proba:parameter}
Let $r$ and $d$ be two positive integers with $r \geq d + \log_q(2d)$.
For any place $\p$ of degree $d$, the Bernoulli variable $W_{r,\p}$ 
takes the value $1$ (\emph{i.e.} $\p$ is Wieferich in base $1$)
with probability $q^{-d}$.
\end{thm}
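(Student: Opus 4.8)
The plan is to reduce the question to a statement about the distribution of the polynomial $\phi_{|\phi(\FF_\p)|}(1) \in A_\p$ when $\phi$ ranges over $\Omega_r$, using the criterion $c_\p(\phi;1) = v_\p(\phi_{|\phi(\FF_\p)|}(1)) - 1$ from Proposition~\ref{prop:clé}. Indeed, $\p$ is $\phi$-Wieferich in base $1$ precisely when $v_\p(\phi_{|\phi(\FF_\p)|}(1)) \geq 2$, and since the first factor $\p$ always divides $\phi_{|\phi(\FF_\p)|}(1)$ (because $|\phi(\FF_\p)|$ annihilates $\phi(\FF_\p)$, so $\phi_{|\phi(\FF_\p)|}(1) \equiv 0 \pmod \p$), the event $W_{r,\p} = 1$ is exactly the event that $\phi_{|\phi(\FF_\p)|}(1)/\p$ is a unit in $A_\p$, \emph{i.e.} nonzero modulo $\p$. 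So the target probability $q^{-d}$ should come from showing that $\phi_{|\phi(\FF_\p)|}(1) \bmod \p^2$, or equivalently the residue $\big(\phi_{|\phi(\FF_\p)|}(1)/\p\big) \bmod \p$, is uniformly distributed in $\FF_\p \cong \FF_{q^d}$, which has $q^d$ elements.

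The key steps, in order, would be: (1) Fix $\p$ of degree $d$ and unwind the definition of $|\phi(\FF_\p)|$; recall from Equation~\eqref{eq:resolutionphiT} (specialized at $T=1$) that $|\phi(\FF_\p)| = \det_A(t - \phi_t \mid A\otimes_\FF \FF_\p)$, a polynomial in $A$ of degree $d$. Its reduction modulo $\p$ is the characteristic polynomial of the Frobenius-type endomorphism attached to $\phi$ on $\FF_\p$, hence depends only on $\phi \bmod \p$. (2) Analyze the $A$-linear (in fact $\FF$-linear) dependence of the residue $r(\phi) := \big(\phi_{|\phi(\FF_\p)|}(1)/\p\big) \bmod \p \in \FF_\p$ on the coefficients $g_1, \ldots, g_r$ of $\phi_t$. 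The hope is that, after conditioning on $\phi \bmod \p$ (which fixes $|\phi(\FF_\p)| \bmod \p^2$ up to the needed order and fixes the relevant lower-order data), the remaining freedom in the high-degree parts of $g_1, \ldots, g_r$ makes $r(\phi)$ range uniformly over $\FF_\p$. (3) This is where the hypothesis $r \geq d + \log_q(2d)$ enters: one needs enough "room" in the coefficient space $A_{\leq q^r}$ (polynomials of degree $\leq q^r$, the constraint coming from smallness applied to $g_r$) so that the evaluation/reduction map hitting $\FF_{q^d}$, which involves passing through degree-$\sim q^d$ truncations of the exponential or of $\phi_{|\phi(\FF_\p)|}$, remains surjective with uniform fibers; the bound $q^r \geq 2d\cdot q^d$ is presumably exactly what guarantees the relevant linear map over $\FF_q$ is surjective. (4) Conclude by the fiberwise-uniform principle: a surjective $\FF_q$-linear (or $\FF_q$-affine) map from a finite set with the uniform measure pushes forward to the uniform measure on its image.

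The main obstacle I expect is Step (2)–(3): establishing that the map $\phi \mapsto r(\phi) \in \FF_{q^d}$ is genuinely surjective (and affine-linear in the varying coefficients, so that surjectivity gives uniformity) rather than landing in a proper coset or subspace. The subtlety is that $|\phi(\FF_\p)|$ itself depends on $\phi$, so $\phi_{|\phi(\FF_\p)|}(1)$ is not linear in the $g_i$'s; one must isolate the dominant contribution — likely by observing that modulo $\p^2$ only $\phi \bmod \p$ matters for $|\phi(\FF_\p)|$, so one can first condition on $\phi \bmod \p$ and then, within each such class, exploit the high-degree coefficients of the $g_i$ (which are free precisely because $\deg_t g_i$ is allowed up to $q^i$ while $\p$ has degree $d \ll q^i$ for $i$ near $r$) to move $r(\phi)$ around. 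Making the linear-algebra bookkeeping precise — identifying which coefficient slots remain free after conditioning and checking the resulting $\FF_q$-linear map onto $\FF_{q^d}$ is onto, with the rank bound matching $r \geq d + \log_q(2d)$ — is the technical heart of the argument. A secondary point to handle carefully is the case distinction in hypothesis~\ref{Hyp} when $q = 2$ and $d = 1$, though here $d \geq 1$ and the statement as phrased for $d=1$ was already covered by Corollary~\ref{cor:proba:deg1}, so one may assume $d \geq 2$ or invoke \ref{Hyp} directly.
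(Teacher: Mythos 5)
Your overall strategy matches the paper's: condition on the reduction $\phibar$ of $\phi$ modulo $\p$ (the paper partitions $\Omega_r$ into the classes $\Omega_r(\phibar)$), observe that within each class the Wieferich condition becomes an $\FF$-affine condition on the perturbation $f$ with $\psi_t = \phi_t + \p f$, and conclude by showing the associated $\FF$-linear map onto $\Fp$ is surjective, so that the target value is hit with probability $q^{-d}$. Two remarks on your reduction: the paper works with $\phi_a(1)$ where $a$ is the monic generator of $\pi_1(\phibar;\p)$ (degree $\le d$) rather than with $\phi_{|\phi(\FF_\p)|}(1)$, which keeps the linearization (their Lemma~\ref{lem:proba:differential}) finite and explicit and avoids invoking Proposition~\ref{prop:clé}, hence also avoids the hypothesis~\ref{Hyp} entirely; and your identification of ``$W_{r,\p}=1$'' with ``$\phi_{|\phi(\FF_\p)|}(1)/\p$ is a unit'' is the complement of the Wieferich event, though the uniform-distribution statement you aim for would of course give both probabilities at once.

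The genuine gap is exactly where you place it, but it is not a matter of ``linear-algebra bookkeeping'': the surjectivity of the linear map does \emph{not} follow from having enough free coefficient slots. Writing $L_r(f) = \sum_{i=1}^d\sum_{j=0}^{i-1} a_i \xi^j f(\mu_{i-j-1})$ for the relevant linear form, one has $L_r(\tau^k) = \sum_{i,j} a_i \xi^j \mu_{i-j-1}^{q^k}$, and since $\mu^{q^d}=\mu$ for $\mu\in\Fp$, the value $L_r(\tau^k)$ depends only on $k \bmod d$. So enlarging $r$ gives access only to the $d$ values $L_r(\tau^j),\ldots,L_r(\tau^{j+d-1})$ and their $A$-multiples $h(\xi)L_r(\tau^{j+k})$; a priori all $d$ of these could vanish simultaneously, in which case $L_r$ would be identically zero on the free part of the coefficient space and the distribution would be degenerate, not uniform. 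Ruling this out is the content of the paper's Lemma~\ref{lem:proba:nonzero}, which is proved by a nontrivial induction on auxiliary sums $\xi_{n,s}$ satisfying $\xi_{n,j}^q - \xi_{n,j} = \xi_{n+1,j-1}(\xi^{q^n}-\xi)$; the contradiction ultimately comes from the fact that the annihilator polynomial $a=\pi_1(\phibar;\p)$ cannot be constant. This arithmetic input, which uses the specific structure of the coefficients $a_i$, is the missing idea in your plan; the hypothesis $r \ge d + \log_q(2d)$ then only serves, as you guessed, to guarantee that the monomials $h(t)\tau^{j+k}$ with $\deg h \le d-1$ and $q^j \ge 2d-1$ fit inside the allowed coefficient space $\Omega'_r$.
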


The rest of this subsection is devoted to the proof of 
Theorem~\ref{th:proba:parameter}.
Unfortunately, it seems difficult to follow the same strategy 
we used for places of degree $1$. Indeed, although Wieferich 
places of higher degrees $d$ can certainly be characterized
by the vanishing of some polynomial, it looks difficult to study
its distribution when the underlying Drinfeld model $\phi$
varies.

Instead, we will use another characterization of Wieferich
places, that we explain now.
Let $\p$ be a place of $A$ of degree $d$ and let $\Fp := A/\p$
denote the corresponding residual field as before. We first partition 
the universe $\Omega_r$ according to the reduction modulo~$\p$:
given a Drinfeld module $\phibar : A \to \Fp\{\tau\}$, we let 
$\Omega_r(\phibar)$ be the subset of $\Omega_r$ consisting of
Drinfeld models $\phi$ which reduces to $\phibar$ modulo $\p$.
We are going to prove that the proportion of Drinfeld models
admitting $\p$ as a Wieferich place inside each nonempty
$\Omega_r(\phibar)$ is $q^{-d}$; this will be enough to
conclude.

From now on, we fix $\phibar$ as above, together with a 
lifting $\phi \in \Omega_r(\phibar)$. If $\psi$ is a second
Drinfeld model in $\Omega_r(\phibar)$, we have an equality
of the form $\psi_t = \phi_t + \p f$ with $f \in A\{\tau\}$.
Moreover, $f$ takes the form
$f = f_1 \tau + f_2 \tau^2 + \cdots + f_r \tau^r$
with $\deg f_i \leq q^i - d$. 
We will denote by $\Omega'_r$ this set in which $f$ varies.

\begin{lem}
\label{lem:proba:differential}
Keeping the previous notation, we have
$$\psi_{t^i} \equiv \phi_{t^i} + 
  \p \cdot \sum_{j=0}^{i-1} t^j f \phi_t^{i-j-1} \pmod{\p^2}.$$
\end{lem}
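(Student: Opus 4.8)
The plan is to prove this congruence by induction on $i$, working modulo $\p^2$ throughout. The base case $i=1$ is immediate: the sum on the right has a single term ($j=0$) equal to $f$, so the claimed congruence reads $\psi_t \equiv \phi_t + \p f$, which is exactly the defining relation between $\psi$ and $\phi$. For the inductive step, I would write $\psi_{t^{i+1}} = \psi_{t^i} \cdot \psi_t$ (using that $\psi : A \to A\{\tau\}$ is a ring homomorphism), substitute the inductive hypothesis for $\psi_{t^i}$ and the relation $\psi_t = \phi_t + \p f$, and expand.

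Concretely, modulo $\p^2$ one gets
\[
\psi_{t^{i+1}} \equiv \left(\phi_{t^i} + \p \sum_{j=0}^{i-1} t^j f \phi_t^{i-j-1}\right)(\phi_t + \p f)
\equiv \phi_{t^{i+1}} + \p\, \phi_{t^i} f + \p \sum_{j=0}^{i-1} t^j f \phi_t^{i-j} \pmod{\p^2},
\]
where the cross term $\p^2 \big(\sum_j t^j f \phi_t^{i-j-1}\big) f$ vanishes mod $\p^2$. It remains to identify $\phi_{t^i} f + \sum_{j=0}^{i-1} t^j f \phi_t^{i-j}$ with $\sum_{j=0}^{i} t^j f \phi_t^{i-j}$. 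The sum $\sum_{j=0}^{i-1} t^j f \phi_t^{i-j}$ already accounts for the terms $j = 0, \ldots, i-1$ of the target sum, so what is needed is precisely that $\phi_{t^i} f$ equals the missing term $t^i f$ — but wait, that is false in general since $f$ and $\phi_{t^i}$ do not commute. The correct bookkeeping is to keep the term $\phi_{t^i}f$ as the $j=i$ summand in a sum written as $\sum_{j=0}^{i} \phi_t^{j} f \phi_t^{i-j}$ after re-deriving; so I would instead set up the induction with the equivalent expression $\p \sum_{j=0}^{i-1} \phi_t^{j} f\, \phi_t^{i-j-1}$, which the paper has presumably already checked coincides with the stated one modulo $\p^2$ because $\phi_t \equiv t \pmod{\p}$ (reduction of $\phi$ is $\phibar$, whose action of $t$ is multiplication by the image of $t$), hence $\phi_t^j f \equiv t^j f \pmod \p$ and the whole correction term is only considered mod $\p^2$ with a factor $\p$ in front.

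This last observation is in fact the crux: since every summand in the correction is multiplied by $\p$, and we only care about the result modulo $\p^2$, we may freely replace $\phi_t$ by $t$ (or conversely) inside the sum, as $\phi_t - t \equiv 0 \pmod \p$ forces $\p(\phi_t^j f \phi_t^{i-j-1} - t^j f \phi_t^{i-j-1}) \equiv 0 \pmod{\p^2}$, and similarly for the remaining powers of $\phi_t$. So the cleanest route is: (1) prove by induction the exact identity $\psi_{t^i} = \phi_{t^i} + \p \sum_{j=0}^{i-1} \phi_t^{j} f\, \phi_t^{i-j-1} + \p^2(\cdots)$ in $A\{\tau\}$, which is a purely formal manipulation with the homomorphism property and the relation $\psi_t = \phi_t + \p f$; then (2) reduce modulo $\p^2$ and replace each leading block of $\phi_t$'s by the corresponding power of $t$, using $\phi_t \equiv t \pmod \p$. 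The main obstacle — really the only subtlety — is keeping track of noncommutativity in $A\{\tau\}$: one must not prematurely collapse $\phi_t^j f$ to $t^j f$ before the factor $\p$ is in place, and one must expand the product in the correct order since $f \phi_t \neq \phi_t f$. Once the induction is set up with the right ordered sum, everything else is routine.
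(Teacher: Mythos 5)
You have correctly located the delicate point: after expanding $\psi_{t^{i+1}} = \psi_{t^i}\psi_t$ using the inductive hypothesis, the cross term is $\phi_{t^i}\,\p f$, and one cannot naively identify $\phi_{t^i} f$ with $t^i f$ because of noncommutativity. However, the repair you propose rests on a false premise. You assert that $\phi_t \equiv t \pmod \p$ in $A\{\tau\}$, and hence that $\phi_t^j f \equiv t^j f \pmod \p$. This is wrong: we have $\phi_t - t = g_1\tau + \cdots + g_r\tau^r$, and there is no reason whatsoever for the coefficients $g_i$ to be divisible by $\p$. (The reduction $\phibar$ is still a full Drinfeld module over $\Fp$; its $t$-action is not just multiplication by the image of~$t$.) Consequently your claimed equivalence between the sum with $t^j$ and the sum with $\phi_t^j$ is unjustified, and the ``exact identity'' you want to set up also fails: the cross term appearing in the inductive step is $\phi_t^i\,\p f$, which is not congruent to $\p\,\phi_t^i f$ modulo $\p^2$ (their difference equals $\sum_{k\geq 1} c_k(\p^{q^k}-\p)\tau^k f$ where $\phi_t^i = \sum_k c_k\tau^k$, and this is only $\equiv 0 \pmod\p$).

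The mechanism the paper actually uses is different and hinges on the twisted commutation rule $\tau c = c^q\tau$. Write $\phi_{t^i} = t^i + h\tau$ with $h\in A\{\tau\}$ (the constant term in $\tau$ of $\phi_{t^i}$ is $t^i$, by the homomorphism property). Then
\[
\phi_{t^i}\,\p f = t^i\,\p f + h\tau\,\p f = \p\, t^i f + h\,\p^q\,\tau f,
\]
and since $q\geq 2$ the second summand lies in $\p^2 A\{\tau\}$, so $\phi_{t^i}\,\p f \equiv \p\, t^i f \pmod{\p^2}$. Inserting this into your inductive expansion gives the claimed congruence directly. In short, the saving grace is not ``$\phi_t\equiv t$ modulo $\p$'' (which is false) but that pushing $\tau$ past $\p$ produces $\p^q$, which already carries all the $\p$-divisibility you need.
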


\begin{proof}
We proceed by induction on $i$. For $i = 1$, the equality
we have to prove is just $\psi_t = \phi_t + \p f$, which is
true by definition of $f$. We now assume that the equality
holds for~$i$. We compute
\begin{align*}
\psi_{t^{i+1}} 
& = \psi_{t^i} \psi_t
  = \left(\phi_{t^i} + \p \cdot \sum_{j=0}^{i-1} t^j f \phi_t^{i-j-1}\right)
    \cdot \big(\phi_t + \p f\big) \\
& \equiv \phi_{t^{i+1}} + \p \cdot \sum_{j=0}^{i-1} t^j f \phi_t^{i-j}
  + \phi_t^i \p f \pmod{\p^2}.
\end{align*}
To handle the last summand, we write
$\phi_t^i = \phi_{t^i} = t^i + h \tau$ with
$h \in A\{\tau\}$. From this, we derive
$$\phi_t^i \p f = t^i \p f + h \tau \p f = t^i \p f + h \p^q \tau f
\equiv t^i \p f \pmod{\p^2}.$$
Injecting finally this is the first equality, we obtain the
announced formula.
\end{proof}

We notice now that the ideal $\pi_1(\psi; \p)$ does not depend
on $\psi \in \Omega_r(\phibar)$, but only on $\phibar$. Let
us simply write
$a = a_0 + a_1 t + \cdots + a_d t^d$ (with $a_i \in \FF$)
for the monic generator of this ideal.

\begin{prop}
\label{prop:proba:characterization}
We keep the previous notation and let further $\xi$ (resp.
$\mu_j$) be the image of $t$ (resp. of $\phi_{t^j}(1)$) in $\Fp$.
Then the place $\p$ is $\psi$-Wieferich in base $1$ if and only
if
$$\sum_{i=1}^d \sum_{j=0}^{i-1} a_i \xi^j f(\mu_{i-j-1}) \equiv 
  -\frac{\phi_a(1)}{\p} \pmod \p.$$
\end{prop}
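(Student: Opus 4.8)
The plan is to convert the Wieferich condition into a single congruence modulo $\p^2$ for the quantity $\psi_a(1)$, and then expand that quantity using Lemma~\ref{lem:proba:differential}. First I would reformulate being Wieferich. Since $\psi$ reduces to $\phibar$ modulo $\p$, we have $\pi_1(\psi;\p) = \pi_1(\phibar;\p) = (a)$ (and $\deg a \leq d$ because $A/(a)$ embeds into $A/\p$); in particular $a$ also lies in $\pi_1(\phi;\p)$, so $\phi_a(1) \in \p A$ and $\phi_a(1)/\p$ is a genuine element of $A$. As $\pi_1(\psi;\p^2) \subseteq \pi_1(\psi;\p) = (a)$ by Proposition~\ref{prop:properties-of-pi}\ref{item:inclusion}, the equality $\pi_1(\psi;\p^2) = \pi_1(\psi;\p)$ — that is, the property that $\p$ is $\psi$-Wieferich in base $1$ — holds if and only if $a$ itself lies in $\pi_1(\psi;\p^2)$, i.e.\ if and only if $\psi_a(1) \equiv 0 \pmod{\p^2}$.

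The next step is to compute $\psi_a(1)$ modulo $\p^2$. Writing $a = \sum_{i=0}^d a_i t^i$, the $\FF$-linearity of $\psi$ gives $\psi_a = \sum_{i=0}^d a_i\,\psi_{t^i}$, and plugging in Lemma~\ref{lem:proba:differential} (the $i=0$ term contributing nothing beyond $\phi_a$) yields
\[
\psi_a \equiv \phi_a + \p \cdot \sum_{i=1}^d \sum_{j=0}^{i-1} a_i\, t^j f\, \phi_t^{\,i-j-1} \pmod{\p^2}.
\]
Because left multiplication by a scalar in $A$ commutes with (and the evaluation map $A\{\tau\} \to A$, $F \mapsto F(1)$, is additive in $F$), evaluating at $1$ gives $\psi_a(1) \equiv \phi_a(1) + \p S \pmod{\p^2}$ where $S := \sum_{i=1}^d \sum_{j=0}^{i-1} a_i\,\big(t^j f\, \phi_t^{\,i-j-1}\big)(1) \in A$. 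Rewriting the congruence as $\psi_a(1) \equiv \p\big(\phi_a(1)/\p + S\big) \pmod{\p^2}$ and dividing by $\p$, the reformulation of the first paragraph shows that $\p$ is $\psi$-Wieferich in base $1$ if and only if $S \equiv -\,\phi_a(1)/\p \pmod \p$.

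It remains to identify $S$ modulo $\p$ with the left-hand side of the statement. Using $\phi_t^{\,i-j-1} = \phi_{t^{i-j-1}}$ and multiplicativity of the $A\{\tau\}$-action, one has $\big(t^j f\, \phi_t^{\,i-j-1}\big)(1) = t^j\cdot f\big(\phi_{t^{i-j-1}}(1)\big)$. Reducing modulo $\p$: the reduction map $A \to \Fp$ is a ring homomorphism commuting with $q$-th powers, hence it sends $f(y)$ to $f(\bar y)$ for any $y \in A$, where on the right $f$ is viewed as acting on $\Fp$ through the reductions of its coefficients; applied to $y = \phi_{t^{i-j-1}}(1)$ this gives $\big(t^j f\, \phi_t^{\,i-j-1}\big)(1) \equiv \xi^j\, f(\mu_{i-j-1}) \pmod \p$, and summing over $i$ and $j$ finishes the proof. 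I do not anticipate any genuine obstacle: the proposition is essentially a careful unwinding of Lemma~\ref{lem:proba:differential}, the only points deserving attention being this last compatibility between evaluation of twisted polynomials and reduction modulo $\p$, and the routine tracking of which multiples of $\p$ still matter modulo $\p^2$.
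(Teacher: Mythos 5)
Your proof is correct and follows essentially the same route as the paper's: both apply Lemma~\ref{lem:proba:differential} term by term to $a = \sum a_i t^i$, evaluate at $1$, and observe that the Wieferich condition amounts to $\psi_a(1) \equiv 0 \pmod{\p^2}$. You spell out in more detail the reformulation of Wieferich via $\pi_1(\psi;\p^2) \subseteq (a)$ and the compatibility of evaluating twisted polynomials with reduction modulo $\p$, which the paper leaves implicit, but the underlying argument is identical.
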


\begin{proof}
It follows from Lemma~\ref{lem:proba:differential} that
$$\psi_a(1) \equiv 
\phi_a(1) + \p \sum_{i=1}^d \sum_{j=0}^{i-1} 
  a_i \cdot t^j \cdot (f\phi_t^{i-j-1})(1)
  \pmod{\p^2}.$$
Therefore the condition of the proposition is equivalent to
the vanishing of $\psi_a(1)$ modulo $\p^2$ which is, by 
definition, also equivalent to the fact that $\p$ is 
$\psi$-Wieferich in base $1$.
\end{proof}

The main insight of Proposition~\ref{prop:proba:characterization}
is that it provides a \emph{linear} characterization of the
property of being Wieferich. To fully exploit this fact, we
introduce the mapping
$$\begin{array}{rcl}
L_r : \quad
\Omega'_r & \longrightarrow & \Fp \\
f & \mapsto & 
  \displaystyle \sum_{i=1}^d \sum_{j=0}^{i-1} a_i \xi^j f(\mu_{i-j-1})
\end{array}$$
It is $\FF$-linear. Besides, Proposition~\ref{prop:proba:characterization}
ensures that the Drinfeld models $\psi \in \Omega_r(\phibar)$ admitting
$\p$ as a Wieferich place in base $1$ are in one-to-one correspondence
with the inverse image by $L_r$ of the element
$-\frac{\phi_a(1)}{\p} \in \Fp$. Proving that this event holds with
probability $q^{-d} = \frac 1 {\text{Card}(\Fp)}$ then amounts to
proving that $L_r$ is surjective.

\begin{lem}
\label{lem:proba:nonzero}
Let $j \geq 1$. At least one of the elements
$L_r(\tau^j), L_r(\tau^{j+1}), \ldots, 
L_r(\tau^{j+d-1})$ does not vanish.
\end{lem}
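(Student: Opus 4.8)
The plan is to turn the vanishing of $d$ consecutive values $L_r(\tau^{j}),\dots,L_r(\tau^{j+d-1})$ into a Moore-type linear system whose matrix is manifestly invertible. The first step is purely computational: since $\tau^k$ acts on $\Fp$ by $x\mapsto x^{q^k}$, the definition of $L_r$ reads $L_r(\tau^k)=\sum_{i=1}^{d}\sum_{\ell=0}^{i-1}a_i\,\xi^{\ell}\,\mu_{i-\ell-1}^{q^k}$, and substituting $m=i-\ell-1$ and exchanging the two summations collapses this to
\[
L_r(\tau^k)=\sum_{m=0}^{d-1}c_m\,\mu_m^{q^k},
\qquad
c_m:=\sum_{i=m+1}^{d}a_i\,\xi^{\,i-1-m}\in\Fp.
\]
Write $d':=\deg a$. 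Since $\pi_1(\phi;\p)$ is a proper ideal of $A$ (indeed $\phi_1(1)=1\not\equiv 0\bmod\p$) we have $d'\geq 1$, and since $\dim_{\FF}\Fp=d$ we have $d'\leq d$. As $a_i=0$ for $i>d'$ and $a$ is monic, we get $c_{d'-1}=a_{d'}=1\neq 0$, while $c_m=0$ for $m\geq d'$.

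The single structural ingredient I would isolate next is that $\mu_0,\dots,\mu_{d'-1}$ are $\FF$-linearly independent in $\Fp$. This follows from the $A$-module isomorphism $A\cdot 1\cong A/\pi_1(\phi;\p)=A/(a)$ inside $\phi(\Fp)$ that sends $\phi_c(1)$ to the class $\bar c$: under it $\mu_m$ corresponds to $\overline{t^m}$, and $1,t,\dots,t^{d'-1}$ is an $\FF$-basis of $A/(a)$ because $a$ is monic of degree $d'$.

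Finally, I would argue by contradiction. Suppose $L_r(\tau^{j})=\dots=L_r(\tau^{j+d-1})=0$; these in particular vanish for the $d'$ exponents $k=j,\dots,j+d'-1$. The map $\sigma\colon x\mapsto x^{q^{j}}$ is an $\FF$-linear automorphism of $\Fp$ (it is a power of the $q$-Frobenius, which fixes $\FF$ pointwise), so setting $\nu_m:=\sigma(\mu_m)$ the relations rewrite as $\sum_{m=0}^{d'-1}c_m\,\nu_m^{q^{k'}}=0$ for $k'=0,\dots,d'-1$. Since $\sigma$ is an injective $\FF$-linear map, $\nu_0,\dots,\nu_{d'-1}$ remain $\FF$-linearly independent, so the Moore matrix $\bigl(\nu_m^{q^{k'}}\bigr)_{0\leq k',m\leq d'-1}$ is invertible (see e.g.\ \cite[\S1.3]{goss}); hence $c_m=0$ for all $m$, contradicting $c_{d'-1}\neq 0$. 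The only mildly delicate points are keeping the indices straight in the first step and checking that the argument goes through when $d'<d$ — which it does, since we only use $d'$ consecutive exponents and the lemma provides $d\geq d'$ of them.
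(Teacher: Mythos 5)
Your proof is correct, but it takes a genuinely different and cleaner route than the paper's. After reindexing to put $L_r(\tau^k)$ in the form $\sum_{m=0}^{d-1}c_m\mu_m^{q^k}$, the paper proceeds by a rather intricate induction: it first extends the vanishing to all $k\in\ZZ$ via $q^d$-periodicity, introduces the auxiliary sums $\xi_{n,s}$, proves an identity $\xi_{n,j}^q-\xi_{n,j}=\xi_{n+1,j-1}(\xi^{q^n}-\xi)$, and peels off factors $(\xi^{q^n}-\xi)$ step by step until it reaches a triangular system forcing $a_1=\cdots=a_d=0$. You instead isolate the structural point underneath all of that: since the map $A/(a)\to\phi(\Fp)$, $\bar c\mapsto\phi_c(1)\bmod\p$, is an injective $\FF$-linear map, the elements $\mu_0,\dots,\mu_{d'-1}$ (with $d'=\deg a$) are $\FF$-linearly independent, and likewise their Frobenius twists $\nu_m=\mu_m^{q^j}$; the vanishing of the $d'$ consecutive values then reads as a Moore system $\sum_m c_m\nu_m^{q^{k'}}=0$, whose matrix is nonsingular precisely by that linear independence, forcing $c_m=0$ and contradicting $c_{d'-1}=1$. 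A nice bonus is that you only need $d'\le d$ of the given values, so you sidestep the $q^d$-periodicity trick entirely. Your argument makes the conceptual content (linear independence of the $\mu_m$ and the Moore criterion) explicit where the paper's computation hides it; the paper's version is self-contained and avoids citing the Moore nonsingularity criterion, but at the cost of the $\xi_{n,s}$ bookkeeping.
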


\begin{proof}
We assume by contradiction that
$L_r(\tau^j) = \cdots = L_r(\tau^{j+d-1}) = 0$.
By definition, we have
$$L_r(\tau^k) = \sum_{i=1}^d \sum_{j=0}^{i-1} a_i \xi^j \mu_{i-j-1}^{q^k}.$$
By assumption, the latter vanishes for $k$ varying between $j$ and 
$j{+}d{-}1$. Nonetheless, given that $\mu_{i-j-1} \in \Fp$, we have
$\mu_{i-j-1}^{q^d} = \mu_{i-j-1}$, from what we conclude that the
vanishing holds for all $k \in \ZZ$.

Using algebraic transformations, we are going to prove that this
implies other vanishings. Precisely, for $n, s \geq 0$, we set
$$\xi_{n,s} := \sum_{\substack{e_0, \ldots, e_{n-1} \geq 0 \\ 
                              e_0 + \cdots + e_{n-1} = s}} 
  \xi^{e_0 + q e_1 + \cdots + q^{n-1} e_{n-1}}.$$
We observe that $\xi_{n,0} = 1$ for all $n$, and that $\xi_{1,s} =
\xi^s$ for all $s$. Thus our assumption reads
$$\forall k \in \ZZ, \qquad
  \sum_{i=1}^d \sum_{j=0}^{i-1} 
  a_i \cdot \xi_{1,j} \cdot \mu_{i-j-1}^{q^k} = 0.$$
We will prove by induction on $n$ that
\begin{equation}
\label{eq:proba:HR}
\forall k \in \ZZ, \qquad
  \sum_{i=n}^d \sum_{j=0}^{i-n} 
  a_i \cdot \xi_{n,j} \cdot \mu_{i-j-n}^{q^k} = 0.
\end{equation}
for all $n \in \{1, \ldots, d\}$. Starting from the induction hypothesis
for some $n < d$ (with $k$ replaced by $k{-}1$) and raising it to the $q$-th
power, we get
$$\sum_{i=n}^d \sum_{j=0}^{i-n} 
  a_i \cdot \xi_{n,j}^q \cdot \mu_{i-j-n}^{q^k} = 0.$$
It follows that
\begin{equation}
\label{eq:proba:HR:step}
\sum_{i=n}^d \sum_{j=1}^{i-n} 
  a_i \cdot (\xi_{n,j}^q - \xi_{n,j}) \cdot \mu_{i-j-n}^{q^k} = 0.
\end{equation}
Note that the sum over $j$ could safely starts at $1$ because the
terms corresponding to $j=0$ all vanish. We now claim that the
following identity holds:
\begin{equation}
\label{eq:proba:relxi}
\xi_{n,j}^q - \xi_{n,j} = \xi_{n+1,j-1} \cdot (\xi^{q^n} - \xi).
\end{equation}
Indeed, we first observe that
$$\xi_{n,j}^q - \xi_{n,j}
  = \sum_{\substack{e_1, \ldots, e_n \geq 0 \\ 
                    e_1 + \cdots + e_n = j}} 
    \xi^{q e_1 + q^2 e_2 + \cdots + q^n e_n} 
  - \sum_{\substack{e_0, \ldots, e_{n-1} \geq 0 \\ 
                    e_0 + \cdots + e_{n-1} = j}} 
    \xi^{e_0 + q e_1 + \cdots + q^{n-1} e_{n-1}}.$$
The terms with $e_n = 0$ in the first sum cancel with the terms with
$e_0 = 0$ is the second sum. Therefore, we do not change the value of
the difference if we remove those terms. Performing in addition the
changes of variables $e_0 \mapsto e_0 - 1$ et $e_n \mapsto e_n - 1$,
we end up with
$$\begin{array}{l}
\xi_{n,j}^q - \xi_{n,j} = \medskip \\
\hspace{3ex} \displaystyle
    \sum_{\substack{e_1, \ldots, e_n \geq 0 \\ 
                    e_1 + \cdots + e_n = j-1}} 
    \xi^{q e_1 + q^2 e_2 + \cdots + q^n (e_n + 1)} 
  - \sum_{\substack{e_0, \ldots, e_{n-1} \geq 0 \\ 
                    e_0 + \cdots + e_{n-1} = j-1}} 
    \xi^{(e_0+1) + q e_1 + \cdots + q^{n-1} e_{n-1}}.
\end{array}$$
Similarly, we compute
$$\begin{array}{l}
\xi_{n+1,j-1} \cdot (\xi^{q^n} - \xi) =
\xi_{n+1,j-1} \xi^{q^n} - \xi_{n+1,j-1} \xi = \medskip \\
\hspace{3ex} \displaystyle
    \sum_{\substack{e_0, \ldots, e_n \geq 0 \\ 
                    e_0 + \cdots + e_n = j-1}} 
    \xi^{e_0 + q e_1 + q^2 e_2 + \cdots + q^n (e_n + 1)} 
  - \sum_{\substack{e_0, \ldots, e_n \geq 0 \\ 
                    e_0 + \cdots + e_n = j-1}} 
    \xi^{(e_0+1) + q e_1 + \cdots + q^n e_n}.
\end{array}$$
Again the terms in the first sum with $e_0 > 0$ cancel with the terms
in the second sum with $e_n > 0$, leading to
$$\begin{array}{l}
\xi_{n+1,j-1} \cdot (\xi^{q^n} - \xi) = \medskip \\
\hspace{3ex} \displaystyle
    \sum_{\substack{e_1, \ldots, e_n \geq 0 \\ 
                    e_1 + \cdots + e_n = j-1}} 
    \xi^{q e_1 + q^2 e_2 + \cdots + q^n (e_n + 1)} 
  - \sum_{\substack{e_0, \ldots, e_{n-1} \geq 0 \\ 
                    e_0 + \cdots + e_{n-1} = j-1}} 
    \xi^{(e_0+1) + q e_1 + \cdots + q^{n-1} e_{n-1}}.
\end{array}$$
The equality~\eqref{eq:proba:relxi} follows. Injecting it in
Equation~\eqref{eq:proba:HR:step}, we obtain
$$ (\xi^{q^n} - \xi) \cdot \sum_{i=n}^d \sum_{j=1}^{i-n} 
  a_i \cdot \xi_{n+1,j-1} \cdot \mu_{i-j-n}^{q^k} = 0.$$
The prefactor $\xi^{q^n}{-}\xi$ does not vanish because $\xi$
generates $\Fp$ over $\FF$ and $n < [\Fp:\FF] = d$. We can then safely
delete it. Performing finally the change of variables $j \mapsto j+1$,
we find Equation~\eqref{eq:proba:HR} for~$n{+}1$ and the induction goes.

We conclude by considering the system of equations~\eqref{eq:proba:HR}
as a linear system on the $a_i$.
For a fixed $n$ (and $k = 0$), Equation~\eqref{eq:proba:HR} is of the form
$$a_n + \star\: a_{n+1} + \star\: a_{n+2} + \cdots + \star\: a_d = 0$$
where the symbols $\star$ hide some coefficients in $\Fp$. It follows
that $a_1 = \cdots = a_d = 0$. The polynomial 
$a = \pi_1(\phibar; \p)$ would then be constant, which is not possible
and proves the lemma.
\end{proof}

From this point, it is easy to prove the surjectivity of $L_r$ (and
then Theorem~\ref{th:proba:parameter}). Let $j$ be the smallest 
positive integer such that $q^j \geq 2d - 1$; one has $j \leq 1 + 
\log_q(2d)$.
On the one hand, by Lemma~\ref{lem:proba:nonzero}, there exists
$k \in \{0, \ldots,  d{-}1\}$ such that $L_r(\tau^{j+k}) \neq 0$.
On the other hand, thanks to our assumptions on $j$ and $r$, the 
Ore
polynomial $h(t){\cdot}\tau^{j+k}$ lies in $\Omega'_r$ for all
$h(t) \in A$ of degree at most $d{-}1$. Thus the image of $L_r$
contains all the elements of the form $h(\xi) L_r(\tau^{j+k})$, 
that are all the elements of $\Fp$. The surjectivity follows.

\subsection{Independence results}

Now we have determined the law of $W_{r,\p}$, we focus on their
relationships, looking for statements in line with
Corollary~\ref{cor:proba:deg1}.\ref{item:proba:independence}.
The theorem we shall prove is the following.

\begin{thm}
\label{th:proba:independence}
Let $\p_1, \ldots, \p_n$ be $n$ pairwise distinct places of $A$
and set $d_m := \deg \p_i$ for all $m \in \{1, \ldots, n\}$.
Then, the random variables $W_{r,\p_1}, \ldots, W_{r,\p_n}$ are
mutually independent as soon as
$$r \geq \max(d_1, \ldots, d_n) + \log_q\big(2 (d_1 + \cdots + d_n)\big).$$
\end{thm}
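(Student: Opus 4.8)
The strategy is to reduce mutual independence to a single surjectivity statement, exactly as in the proof of Theorem~\ref{th:proba:parameter}, but now for a product map. Fix reductions $\phibar_m : A \to \FF_{\p_m}\{\tau\}$ for each $m$, and a common lifting $\phi \in \Omega_r$ reducing to all of them simultaneously (possible by the Chinese remainder theorem applied to the coefficients, since the $\p_m$ are pairwise coprime). Any other $\psi \in \Omega_r$ with the same reductions differs from $\phi$ by $\psi_t = \phi_t + (\p_1\cdots\p_n) f$ where $f = f_1\tau + \cdots + f_r\tau^r$ with $\deg f_i \leq q^i - (d_1+\cdots+d_n)$; the hypothesis on $r$ is exactly what guarantees this space $\Omega'_r$ of admissible $f$'s is nonzero and, more precisely, contains $h(t)\tau^{j+k}$ for all $h$ of degree $\leq d_m - 1$ and all $j$ with $q^j \geq 2(d_1+\cdots+d_n)-1$. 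Conditioning on the tuple $(\phibar_1,\ldots,\phibar_n)$ partitions $\Omega_r$ into blocks, and it suffices to show that inside each nonempty block the events ``$\p_m$ is $\psi$-Wieferich in base $1$'', $m=1,\ldots,n$, occur with the product probability $q^{-(d_1+\cdots+d_n)}$.

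The key point is that Proposition~\ref{prop:proba:characterization} applies at each $\p_m$ \emph{simultaneously}: working modulo $\p_m^2$, only the ``$f$ part'' coming from the factor $\p_m$ matters, and Lemma~\ref{lem:proba:differential} goes through verbatim (the other factors $\p_{m'}$ are units mod $\p_m$ and can be absorbed into $f$). Thus the event ``$\p_m$ is $\psi$-Wieferich in base $1$'' is governed by a single $\FF$-linear functional $L^{(m)}_r : \Omega'_r \to \FF_{\p_m}$, defined by the same formula as $L_r$ with $a$, $\xi$, $\mu_j$ replaced by their $\p_m$-analogues, hitting a prescribed target $-\phi_{a^{(m)}}(1)/\p_m$. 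Mutual independence of the Bernoulli variables then amounts to surjectivity of the product map
\[
L_r := (L^{(1)}_r, \ldots, L^{(n)}_r) : \Omega'_r \longrightarrow \FF_{\p_1} \times \cdots \times \FF_{\p_n}.
\]

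To prove this surjectivity I would argue component by component using a CRT/independence-of-places trick. By Lemma~\ref{lem:proba:nonzero} applied at $\p_m$, for each $m$ and each $j \geq 1$ there is $k_m \in \{0,\ldots,d_m-1\}$ with $L^{(m)}_r(\tau^{j+k_m}) \neq 0$; choosing $j$ with $q^j \geq 2(d_1+\cdots+d_n)-1$ (so $j \leq 1 + \log_q(2(d_1+\cdots+d_n))$, which is allowed by the hypothesis on $r$), the Ore polynomial $h(t)\tau^{j+k_m}$ lies in $\Omega'_r$ for all $h$ of degree $\leq d_m-1$. Hence the image of $L_r$ contains, for each fixed $m$, every vector of the form $\big(h(\xi_1)L^{(1)}_r(\tau^{j+k_m}), \ldots, h(\xi_n)L^{(n)}_r(\tau^{j+k_m})\big)$ with $\deg h \leq d_m - 1$ — but that is not yet enough, so instead I would exhibit, for each $m$, an element of $\Omega'_r$ whose image is nonzero in the $m$-th coordinate and zero in all the others. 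This is where the main obstacle lies: one needs an Ore polynomial $g\cdot\tau^{j+k}$ (with $g \in A$) that vanishes under $L^{(m')}_r$ for $m' \neq m$ while $L^{(m)}_r(g\tau^{j+k}) \neq 0$. Since $L^{(m)}_r(g\tau^{j+k})$ is, up to the nonzero constant $L^{(m)}_r(\tau^{j+k})$ and Frobenius twisting, essentially $g$ evaluated at $\xi_m \in \FF_{\p_m}$, choosing $g$ to be a multiple of $\p_{m'}$ for all $m' \neq m$ but prime to $\p_m$ does the job; one checks the degree bound $\deg g \leq d_1 + \cdots + d_n - d_m \leq d_1+\cdots+d_n-1$ is compatible with membership in $\Omega'_r$ thanks to the hypothesis $r \geq \max_m d_m + \log_q(2(d_1+\cdots+d_n))$. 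Combining these $n$ elements shows $L_r$ is surjective, which yields mutual independence and completes the proof. The delicate bookkeeping is exactly in matching the degree constraints defining $\Omega'_r$ with the degrees of the interpolating polynomials $g$ and $h$; this is routine but must be done carefully, and it is the only place where the precise form of the hypothesis on $r$ is used.
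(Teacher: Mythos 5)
Your proposal is correct and follows essentially the same strategy as the paper: condition on the simultaneous reductions modulo all the $\p_m$, linearize the Wieferich condition at each place into the product map $L_r=(L_{r,1},\dots,L_{r,n})$ on the space $\Omega'_r$ of perturbations $f$ with $\psi_t=\phi_t+\p_1\cdots\p_n f$ (including the unit correction factors $\prod_{m'\neq m}\p_{m'}(\xi_m)$), and reduce mutual independence to the surjectivity of $L_r$. The only, immaterial, difference is in the final surjectivity step: you hit each coordinate axis separately with elements of the form $h(t)\prod_{m'\neq m}\p_{m'}\cdot\tau^{j+k_m}$, whereas the paper takes an arbitrary target tuple, lifts each coordinate via Lemma~\ref{lem:proba:nonzero}, and glues the lifts coefficient-wise by the Chinese remainder theorem; both versions fit inside $\Omega'_r$ under the stated bound on $r$.
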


\begin{proof}
We follow the same method as in \S \ref{ssec:proba}.
For each $m \in \{1, \ldots, n\}$, we write $\Fpm := A/\p_m$.
Given a family of Drinfeld modules
$\phibar_m : A \to \Fpm\{\tau\}$ ($1 \leq m \leq n$), we
consider the set $\Omega_r(\phibar_1, \ldots, \phibar_n) \subset
\Omega_r$ consisting of Drinfeld models which reduces to $\phibar_m$
modulo $\p_m$ for all $m$.
We assume that $\Omega_r(\phibar_1, \ldots, \phibar_n)$ is not
empty and we fix a Drinfeld model $\phi : A \to A\{\tau\}$ in it.
Any other Drinfeld model 
$\psi \in \Omega_r(\phibar_1, \ldots, \phibar_n)$ is such that
$\psi_t = \phi_t + \p_1 \cdots \p_n f$ with $f$ varying in the 
set
$$\Omega'_r = \big\{\,
  f_0 + f_1 \tau + \cdots + f_r \tau^r \in A\{\tau\}
  \,\,\, \text{s.t.} \,\,
  \deg f_i \leq q^i - (d_1 + \cdots + d_n) \text{ for all } i\,\big\}.$$
For a fixed index $m$, let $\xi_m$ and $\mu_{m,j}$ ($0 \leq j \leq r$) 
be the images in $\Fpm$ of $t$ and $\phi_{t^j}(1)$ respectively.
Let also $a_m = a_{m,0} + a_{m,1} t + \cdots + a_{m,d} t^d \in A$
be the monic generator of $\pi_1(\psi; \p_m)$ and set
$$u_m := 
  \prod_{\substack{1 \leq m' \leq n \\ m' \neq m}} \p_{m'}(\xi_m)
  \in \Fpm.$$
Since $\p_{m'}$ is coprime with $\p_m$ for all $m' \neq m$, 
we have $u_m \neq 0$.
Repeating the proof of Proposition~\ref{prop:proba:characterization}, 
we find that the place $\p_m$ is $\psi$-Wieferich in base $1$ if and 
only if
$$\sum_{i=1}^d \sum_{j=0}^{i-1} a_{m,i} \xi_m^j f(\mu_{m,i-j-1}) \equiv
  -u_m \cdot \frac{\phi_a(1)}{\p} \pmod {\p_m}.$$
We now consider the $\FF$-linear map
$$\begin{array}{rcl}
L_{r,m} : \quad
\Omega'_r & \longrightarrow & \Fpm \\
f & \mapsto & 
  \displaystyle \sum_{i=1}^d \sum_{j=0}^{i-1} a_i \xi_m^j f(\mu_{m,i-j-1})
\end{array}$$
and
$L_r : \Omega'_r \to \FF_{\p_1} \times \cdots \times \FF_{\p_n}$, 
$f \mapsto \big(L_{r,m}(f)\big)_{1 \leq m \leq n}$.
As in \S \ref{ssec:proba} (see Lemma~\ref{lem:proba:nonzero} and
the discussion thereafter), we prove that $L_{r,m}$ is surjective.
More precisely, fixing $j \geq \log_q(2(d_1 {+} \cdots {+} d_n))$,
any element $\alpha_m \in \Fpm$ has a preimage of the form
$$f_m = f_{m,j} \tau^j + \cdots + f_{m,j+d_m-1} \tau^{j+d_m-1}
\in A\{\tau\}.$$
This property implies the surjectivity of $L_r$ as follows.
We pick $\alpha_1 \in \FF_{\p_1}, \ldots, \alpha_n \in \FF_{\p_n}$ 
and, for each $m$, we choose a preimage $f_m$ of $\alpha_m$ of the
above form. Setting $d := \max(d_1, \ldots, d_n)$ and applying the 
chinese remainder theorem separately on each coefficient, we find
$g = g_j \tau^j + \cdots + g_{j+d-1} \tau^{j+d-1} \in A\{\tau\}$
such that $g \equiv f_m \pmod{\p_m}$ for all $m$.
Moreover, we may assume that all the $g_i$ have $t$-degree less 
than $d_1 + \cdots + d_n$ because they are defined modulo $\p_1 
\cdots \p_n$. Thanks to our assumption on $r$, we then have $g \in 
\Omega'_r$. Finally, noticing that $L_{r,m}$ depends only on the
reduction of $g$ modulo $\p_m$, we conclude that $L_{r,m}(g) = 
(\alpha_1, \ldots, \alpha_n)$, proving the surjectivity.

It follows that the probability that $L_r$ takes the
value $(\alpha_1, \ldots, \alpha_n)$ is constant, namely
$$\frac 1{\text{Card}\big(\FF_{\p_1} \times \cdots \times \FF_{\p_n}\big)}
 = q^{-(d_1 + \cdots + d_n)}.$$
This proves the independence of the random variables $L_{r,m}$
which, in turn, implies the independence of the $W_{r,\p_m}$.
\end{proof}

Specializing Theorem~\ref{th:proba:independence} to the case where 
we consider all places of a fixed degree~$d$ (resp. of degree at
most~$d$), we get the following.

\begin{cor}
\label{cor:proba:independence}
Let $r$ and $d$ be two positive integers.
\begin{enumerate}[label=(\roman{enumi})]
\item
\label{item:indep:degd}
If $r \geq 2d$, the random variables $W_{r,\p}$, for $\p$ running
over all places of degree $d$, are mutually independent.
\item
\label{item:indep:degltd}
If $r \geq 2d + 1$, the random variables $W_{r,\p}$, for $\p$ running 
over all places of degree at most $d$, are mutually independent.
\end{enumerate}
\end{cor}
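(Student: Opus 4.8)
The plan is to derive Corollary~\ref{cor:proba:independence} from Theorem~\ref{th:proba:independence} by applying the latter to the \emph{complete} (finite) list of primes of the prescribed degrees and by controlling the sum $d_1 + \cdots + d_n$ that governs its hypothesis. One point has to be handled with care: the closed-form inequality $r \geq \max_m d_m + \log_q\!\big(2\sum_m d_m\big)$ stated in Theorem~\ref{th:proba:independence} is marginally too weak for item~\ref{item:indep:degd}, since combined with the crude bound $\sum_m d_m \leq q^d$ it only yields $r \geq 2d + \log_q 2$, that is $r \geq 2d+1$. So I would first extract from the proof of that theorem the slightly sharper statement it actually establishes: writing $d_m = \deg\p_m$, the variables $W_{r,\p_1}, \ldots, W_{r,\p_n}$ are mutually independent whenever there is a positive integer $j$ with $q^j \geq 2\sum_m d_m$ and $j + \max_m d_m - 1 \leq r$. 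These are exactly the two conditions ensuring that the Ore polynomial $g = g_j\tau^j + \cdots + g_{j+\max_m d_m-1}\tau^{j+\max_m d_m-1}$ produced by the chinese remainder theorem at the end of that proof lies in $\Omega'_r$ --- all that the surjectivity of $L_r$ requires.

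With this reformulation in hand, for item~\ref{item:indep:degd} I would apply it with $\p_1, \ldots, \p_n$ running over \emph{all} primes of degree $d$, so that $\max_m d_m = d$ and $\sum_m d_m = nd$ where $n$ is the number of such primes. Since a monic irreducible of degree $d$ accounts for $d$ distinct elements of the field with $q^d$ elements and distinct ones share no element, $n \leq q^d/d$, hence $\sum_m d_m = nd \leq q^d$ and $2\sum_m d_m \leq 2q^d \leq q^{d+1}$ because $q \geq 2$. Thus $j := d+1$ satisfies the first condition while the second reads $2d = j + \max_m d_m - 1 \leq r$. For item~\ref{item:indep:degltd} I would instead let $\p_1, \ldots, \p_n$ run over all primes of degree at most $d$; bounding the number of primes of each degree $e$ by $q^e/e$ gives $\sum_m d_m \leq \sum_{e=1}^d q^e < 2q^d$, whence $2\sum_m d_m < 4q^d \leq q^{d+2}$ since $q^2 \geq 4$, so $j := d+2$ works; the primes of degree exactly $d$ being among those considered, $\max_m d_m = d$, and the second condition becomes $2d+1 = j + \max_m d_m - 1 \leq r$.

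I do not anticipate a genuine obstacle. Once the logarithmic hypothesis of Theorem~\ref{th:proba:independence} is traded for the formulation through the auxiliary exponent $j$, the proof reduces to the elementary estimates $nd \leq q^d$ for the number of primes of degree $d$, $\sum_{e=1}^d q^e < 2q^d$, and the two inequalities $2q^d \leq q^{d+1}$ and $4q^d \leq q^{d+2}$, all valid simply because $q \geq 2$. The only delicate step is precisely this passage through $j$: invoking the closed-form hypothesis verbatim would cost one unit and produce only $r \geq 2d+1$ in item~\ref{item:indep:degd} (resp.\ $r \geq 2d+2$ in item~\ref{item:indep:degltd}).
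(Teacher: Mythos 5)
Your proof is correct and follows the same overall strategy as the paper's (bound $\sum_m d_m$ and invoke Theorem~\ref{th:proba:independence}), but you have correctly identified a real imprecision that the paper's own proof glosses over. The paper simply says that once the estimate $\sum_{\p \in \mathcal P_d} \deg \p \leq q^d$ (resp.\ $< q^{d+1}$) is established, ``the statement follows''; however, feeding those bounds into the closed-form hypothesis of Theorem~\ref{th:proba:independence} as literally stated yields the threshold $r \geq 2d + \log_q 2$ for item~\ref{item:indep:degd} (resp.\ $r \geq 2d + 1 + \log_q 2$ for item~\ref{item:indep:degltd}), which for an integer $r$ forces $r \geq 2d+1$ (resp.\ $r \geq 2d+2$), one unit worse than what the corollary claims. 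Your fix --- replacing the closed-form bound by the integer condition actually used in the proof of the theorem, namely the existence of a positive integer $j$ with $q^j \geq 2\sum_m d_m$ and $j + \max_m d_m - 1 \leq r$, and then checking $2q^d \leq q^{d+1}$ and $2\sum_{e=1}^d q^e \leq q^{d+2}$ --- is exactly the argument needed to obtain the sharp thresholds $r \geq 2d$ and $r \geq 2d+1$. In short, same route as the paper, but yours is the one that actually closes the gap; the paper's write-up is implicitly relying on this sharper reformulation without spelling it out.
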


\begin{proof}
Let $\mathcal P_d$ (resp.~$\mathcal P_{\leq d}$) denote the set 
of places of degree~$d$ (resp. of degree at most~$d$).
After Theorem~\ref{th:proba:independence}, it only remains to
estimate the sum of $\deg \p$ for $\p$ running in~$\mathcal P_d$ 
(resp. in~$\mathcal P_{\leq d}$).
This is quite standard and follows from the
observation that each place of degree $d$ is a divisor of the
polynomial $t^{q^d} - t$. Therefore their product also divides
$t^{q^d} - t$ and we conclude that
$\sum_{\p \in \mathcal P_d} \deg \p \leq q^d$. 
The statement~\ref{item:indep:degd} follows.

Similarly, we have
$$\sum_{\p \in \mathcal P_{\leq d}} \deg \p
\leq \sum_{\delta=1}^d q^\delta 
= \frac{q^{d+1} - q}{q-1} < q^{d+1}$$
which eventually implies~\ref{item:indep:degltd}.
\end{proof}

\subsection{Discussion and numerical simulations}

\begin{figure}
\begin{tabular}{|l|r|r|r|r|r|r|r|r|r|r|}
\cline{2-11}
\multicolumn{1}{c|}{} & \multicolumn{2}{c|}{\textbf{rank 1}} & \multicolumn{2}{c|}{\textbf{rank 2}} & \multicolumn{2}{c|}{\textbf{rank 3}} & \multicolumn{2}{c|}{\textbf{rank 4}} & \multicolumn{2}{c|}{\textbf{rank 5}} \\
\multicolumn{1}{c|}{} & all & n.t. & all & n.t. & all & n.t. & all & n.t. & all & n.t. \\
\hline
\textbf{deg 1} & $\phantom{0}1.14$ & $\phantom{0}0.80$ & $\phantom{0}1.00$ & $\phantom{0}0.94$ & $\phantom{0}0.99$ & $\phantom{0}0.99$ & $\phantom{0}0.99$ & $\phantom{0}0.99$ & $\phantom{0}1.01$ & $\phantom{0}1.01$ \\
\textbf{deg 2} & $\phantom{0}1.71$ & $\phantom{0}0.80$ & $\phantom{0}1.24$ & $\phantom{0}1.08$ & $\phantom{0}1.00$ & $\phantom{0}0.99$ & $\phantom{0}0.99$ & $\phantom{0}0.99$ & $\phantom{0}0.97$ & $\phantom{0}0.97$ \\
\textbf{deg 3} & $\phantom{0}3.43$ & $\phantom{0}1.60$ & $\phantom{0}0.87$ & $\phantom{0}0.44$ & $\phantom{0}1.13$ & $\phantom{0}1.10$ & $\phantom{0}1.06$ & $\phantom{0}1.06$ & $\phantom{0}1.00$ & $\phantom{0}1.00$ \\
\textbf{deg 4} & $\phantom{0}6.86$ & $\phantom{0}3.20$ & $\phantom{0}2.06$ & $\phantom{0}1.23$ & $\phantom{0}0.98$ & $\phantom{0}0.93$ & $\phantom{0}1.03$ & $\phantom{0}1.03$ & $\phantom{0}1.03$ & $\phantom{0}1.03$ \\
\textbf{deg 5} & $13.71$ & $\phantom{0}6.40$ & $\phantom{0}2.54$ & $\phantom{0}0.77$ & $\phantom{0}1.05$ & $\phantom{0}0.95$ & $\phantom{0}0.98$ & $\phantom{0}0.98$ & $\phantom{0}1.01$ & $\phantom{0}1.01$ \\
\textbf{deg 6} & $27.43$ & $12.80$ & $\phantom{0}5.22$ & $\phantom{0}1.70$ & $\phantom{0}1.17$ & $\phantom{0}0.96$ & $\phantom{0}1.02$ & $\phantom{0}1.02$ & $\phantom{0}0.97$ & $\phantom{0}0.97$ \\
\textbf{deg 7} & $54.86$ & $25.60$ & $\phantom{0}8.49$ & $\phantom{0}1.34$ & $\phantom{0}1.38$ & $\phantom{0}0.97$ & $\phantom{0}1.00$ & $\phantom{0}1.00$ & $\phantom{0}1.04$ & $\phantom{0}1.04$ \\
\textbf{deg 8} & \small{${>}100$} & $58.03$ & $16.41$ & $\phantom{0}2.08$ & $\phantom{0}1.86$ & $\phantom{0}1.04$ & $\phantom{0}1.03$ & $\phantom{0}1.03$ & $\phantom{0}0.98$ & $\phantom{0}0.98$ \\
\textbf{deg 9} & \small{${>}100$} & \small{${>}100$} & $29.86$ & $\phantom{0}1.02$ & $\phantom{0}2.65$ & $\phantom{0}1.01$ & $\phantom{0}1.00$ & $\phantom{0}1.00$ & $\phantom{0}1.01$ & $\phantom{0}1.01$ \\
\textbf{deg 10} & \small{${>}100$} & \small{${>}100$} & $62.10$ & $\phantom{0}4.55$ & $\phantom{0}4.28$ & $\phantom{0}1.00$ & $\phantom{0}1.03$ & $\phantom{0}1.03$ & $\phantom{0}1.06$ & $\phantom{0}1.06$ \\
\textbf{deg 11} & \small{${>}100$} & \small{${>}100$} & \small{${>}100$} & $\phantom{0}1.69$ & $\phantom{0}7.56$ & $\phantom{0}1.01$ & $\phantom{0}1.05$ & $\phantom{0}1.05$ & $\phantom{0}0.97$ & $\phantom{0}0.97$ \\
\textbf{deg 12} & \small{${>}100$} & \small{${>}100$} & \small{${>}100$} & $\phantom{0}4.70$ & $14.19$ & $\phantom{0}1.08$ & $\phantom{0}1.06$ & $\phantom{0}1.06$ & $\phantom{0}1.05$ & $\phantom{0}1.05$ \\
\textbf{deg 13} & \small{${>}100$} & \small{${>}100$} & \small{${>}100$} & $\phantom{0}0.67$ & $27.26$ & $\phantom{0}1.04$ & $\phantom{0}0.96$ & $\phantom{0}0.96$ & $\phantom{0}0.95$ & $\phantom{0}0.95$ \\
\textbf{deg 14} & \small{${>}100$} & \small{${>}100$} & \small{${>}100$} & $11.04$ & $53.48$ & $\phantom{0}1.05$ & $\phantom{0}1.02$ & $\phantom{0}1.02$ & $\phantom{0}0.94$ & $\phantom{0}0.94$ \\
\textbf{deg 15} & \small{${>}100$} & \small{${>}100$} & \small{${>}100$} & $\phantom{0}2.18$ & \small{${>}100$} & $\phantom{0}1.04$ & $\phantom{0}1.03$ & $\phantom{0}1.03$ & $\phantom{0}0.97$ & $\phantom{0}0.97$ \\
\textbf{deg 16} & \small{${>}100$} & \small{${>}100$} & \small{${>}100$} & $20.18$ & \small{${>}100$} & $\phantom{0}1.16$ & $\phantom{0}1.00$ & $\phantom{0}1.00$ & $\phantom{0}1.11$ & $\phantom{0}1.11$ \\
\textbf{deg 17} & \small{${>}100$} & \small{${>}100$} & \small{${>}100$} & $\phantom{0}1.31$ & \small{${>}100$} & $\phantom{0}1.04$ & $\phantom{0}0.98$ & $\phantom{0}0.98$ & $\phantom{0}1.06$ & $\phantom{0}1.06$ \\
\textbf{deg 18} & \small{${>}100$} & \small{${>}100$} & \small{${>}100$} & $37.62$ & \small{${>}100$} & $\phantom{0}1.50$ & $\phantom{0}1.14$ & $\phantom{0}1.14$ & $\phantom{0}1.09$ & $\phantom{0}1.09$ \\
\textbf{deg 19} & \small{${>}100$} & \small{${>}100$} & \small{${>}100$} & $\phantom{0}0.81$ & \small{${>}100$} & $\phantom{0}0.94$ & $\phantom{0}1.05$ & $\phantom{0}1.05$ & $\phantom{0}0.97$ & $\phantom{0}0.97$ \\
\hline
\end{tabular}

\caption{Statistics of Wieferich places with $q = 2$}
\label{fig:stats:q2}
\end{figure}

\begin{figure}
\begin{tabular}{|l|r|r|r|r|r|r|r|r|r|r|}
\cline{2-11}
\multicolumn{1}{c|}{} & \multicolumn{2}{c|}{\textbf{rank 1}} & \multicolumn{2}{c|}{\textbf{rank 2}} & \multicolumn{2}{c|}{\textbf{rank 3}} & \multicolumn{2}{c|}{\textbf{rank 4}} & \multicolumn{2}{c|}{\textbf{rank 5}} \\
\multicolumn{1}{c|}{} & all & n.t. & all & n.t. & all & n.t. & all & n.t. & all & n.t. \\
\hline
\textbf{deg 1} & $\phantom{0}1.01$ & $\phantom{0}0.94$ & $\phantom{0}1.00$ & $\phantom{0}1.00$ & $\phantom{0}1.00$ & $\phantom{0}1.00$ & $\phantom{0}1.00$ & $\phantom{0}1.00$ & $\phantom{0}1.00$ & $\phantom{0}1.00$ \\
\textbf{deg 2} & $\phantom{0}0.90$ & $\phantom{0}0.58$ & $\phantom{0}1.00$ & $\phantom{0}1.00$ & $\phantom{0}1.01$ & $\phantom{0}1.01$ & $\phantom{0}1.02$ & $\phantom{0}1.02$ & $\phantom{0}1.03$ & $\phantom{0}1.03$ \\
\textbf{deg 3} & $\phantom{0}2.19$ & $\phantom{0}1.23$ & $\phantom{0}0.99$ & $\phantom{0}0.99$ & $\phantom{0}0.98$ & $\phantom{0}0.98$ & $\phantom{0}1.01$ & $\phantom{0}1.01$ & $\phantom{0}0.98$ & $\phantom{0}0.98$ \\
\textbf{deg 4} & $\phantom{0}4.56$ & $\phantom{0}1.58$ & $\phantom{0}1.00$ & $\phantom{0}1.00$ & $\phantom{0}0.98$ & $\phantom{0}0.98$ & $\phantom{0}0.98$ & $\phantom{0}0.98$ & $\phantom{0}1.01$ & $\phantom{0}1.01$ \\
\textbf{deg 5} & $\phantom{0}9.11$ & $\phantom{0}0.00$ & $\phantom{0}1.00$ & $\phantom{0}1.00$ & $\phantom{0}1.04$ & $\phantom{0}1.04$ & $\phantom{0}0.98$ & $\phantom{0}0.98$ & $\phantom{0}1.00$ & $\phantom{0}1.00$ \\
\textbf{deg 6} & $28.59$ & $\phantom{0}1.31$ & $\phantom{0}1.12$ & $\phantom{0}1.12$ & $\phantom{0}1.04$ & $\phantom{0}1.04$ & $\phantom{0}1.00$ & $\phantom{0}1.00$ & $\phantom{0}0.98$ & $\phantom{0}0.98$ \\
\textbf{deg 7} & $82.01$ & $\phantom{0}0.00$ & $\phantom{0}1.00$ & $\phantom{0}1.00$ & $\phantom{0}1.04$ & $\phantom{0}1.04$ & $\phantom{0}0.98$ & $\phantom{0}0.98$ & $\phantom{0}1.01$ & $\phantom{0}1.01$ \\
\textbf{deg 8} & \small{${>}100$} & $\phantom{0}0.00$ & $\phantom{0}1.44$ & $\phantom{0}1.44$ & $\phantom{0}0.99$ & $\phantom{0}0.99$ & $\phantom{0}0.99$ & $\phantom{0}0.99$ & $\phantom{0}1.03$ & $\phantom{0}1.03$ \\
\textbf{deg 9} & \small{${>}100$} & $\phantom{0}1.64$ & $\phantom{0}0.98$ & $\phantom{0}0.98$ & $\phantom{0}1.00$ & $\phantom{0}1.00$ & $\phantom{0}1.01$ & $\phantom{0}1.01$ & $\phantom{0}1.01$ & $\phantom{0}1.01$ \\
\textbf{deg 10} & \small{${>}100$} & $\phantom{0}5.09$ & $\phantom{0}2.17$ & $\phantom{0}2.17$ & $\phantom{0}1.00$ & $\phantom{0}1.00$ & $\phantom{0}1.06$ & $\phantom{0}1.06$ & $\phantom{0}1.01$ & $\phantom{0}1.01$ \\
\textbf{deg 11} & \small{${>}100$} & $\phantom{0}0.00$ & $\phantom{0}0.96$ & $\phantom{0}0.96$ & $\phantom{0}0.94$ & $\phantom{0}0.94$ & $\phantom{0}0.96$ & $\phantom{0}0.96$ & $\phantom{0}1.03$ & $\phantom{0}1.03$ \\
\textbf{deg 12} & \small{${>}100$} & $\phantom{0}4.06$ & $\phantom{0}4.42$ & $\phantom{0}4.42$ & $\phantom{0}0.97$ & $\phantom{0}0.97$ & $\phantom{0}1.02$ & $\phantom{0}1.02$ & $\phantom{0}0.99$ & $\phantom{0}0.99$ \\
\hline
\end{tabular}

\caption{Statistics of Wieferich places with $q = 3$}
\label{fig:stats:q3}
\end{figure}

\begin{figure}
\begin{tabular}{|l|r|r|r|r|r|r|r|r|r|r|}
\cline{2-11}
\multicolumn{1}{c|}{} & \multicolumn{2}{c|}{\textbf{rank 1}} & \multicolumn{2}{c|}{\textbf{rank 2}} & \multicolumn{2}{c|}{\textbf{rank 3}} & \multicolumn{2}{c|}{\textbf{rank 4}} & \multicolumn{2}{c|}{\textbf{rank 5}} \\
\multicolumn{1}{c|}{} & all & n.t. & all & n.t. & all & n.t. & all & n.t. & all & n.t. \\
\hline
\textbf{deg 1} & $\phantom{0}1.00$ & $\phantom{0}0.99$ & $\phantom{0}0.99$ & $\phantom{0}0.99$ & $\phantom{0}1.00$ & $\phantom{0}1.00$ & $\phantom{0}1.00$ & $\phantom{0}1.00$ & $\phantom{0}1.00$ & $\phantom{0}1.00$ \\
\textbf{deg 2} & $\phantom{0}0.94$ & $\phantom{0}0.88$ & $\phantom{0}1.02$ & $\phantom{0}1.02$ & $\phantom{0}0.99$ & $\phantom{0}0.99$ & $\phantom{0}0.99$ & $\phantom{0}0.99$ & $\phantom{0}1.02$ & $\phantom{0}1.02$ \\
\textbf{deg 3} & $\phantom{0}1.54$ & $\phantom{0}1.29$ & $\phantom{0}1.02$ & $\phantom{0}1.02$ & $\phantom{0}1.00$ & $\phantom{0}1.00$ & $\phantom{0}1.00$ & $\phantom{0}1.00$ & $\phantom{0}1.02$ & $\phantom{0}1.02$ \\
\textbf{deg 4} & $\phantom{0}2.15$ & $\phantom{0}1.16$ & $\phantom{0}1.02$ & $\phantom{0}1.02$ & $\phantom{0}1.01$ & $\phantom{0}1.01$ & $\phantom{0}0.99$ & $\phantom{0}0.99$ & $\phantom{0}0.99$ & $\phantom{0}0.99$ \\
\textbf{deg 5} & $\phantom{0}5.06$ & $\phantom{0}1.06$ & $\phantom{0}0.98$ & $\phantom{0}0.98$ & $\phantom{0}1.00$ & $\phantom{0}1.00$ & $\phantom{0}1.00$ & $\phantom{0}1.00$ & $\phantom{0}0.98$ & $\phantom{0}0.98$ \\
\textbf{deg 6} & $17.29$ & $\phantom{0}1.28$ & $\phantom{0}1.06$ & $\phantom{0}1.06$ & $\phantom{0}0.98$ & $\phantom{0}0.98$ & $\phantom{0}1.02$ & $\phantom{0}1.02$ & $\phantom{0}0.99$ & $\phantom{0}0.99$ \\
\textbf{deg 7} & $64.47$ & $\phantom{0}0.41$ & $\phantom{0}0.99$ & $\phantom{0}0.99$ & $\phantom{0}1.00$ & $\phantom{0}1.00$ & $\phantom{0}1.00$ & $\phantom{0}1.00$ & $\phantom{0}0.98$ & $\phantom{0}0.98$ \\
\textbf{deg 8} & \small{${>}100$} & $\phantom{0}0.85$ & $\phantom{0}1.20$ & $\phantom{0}1.20$ & $\phantom{0}0.98$ & $\phantom{0}0.98$ & $\phantom{0}1.05$ & $\phantom{0}1.05$ & $\phantom{0}1.04$ & $\phantom{0}1.04$ \\
\textbf{deg 9} & \small{${>}100$} & $\phantom{0}0.53$ & $\phantom{0}0.97$ & $\phantom{0}0.97$ & $\phantom{0}1.04$ & $\phantom{0}1.04$ & $\phantom{0}0.97$ & $\phantom{0}0.97$ & $\phantom{0}0.97$ & $\phantom{0}0.97$ \\
\hline
\end{tabular}

\caption{Statistics of Wieferich places with $q = 4$}
\label{fig:stats:q4}
\end{figure}

\begin{figure}
\begin{tabular}{|l|r|r|r|r|r|r|r|r|r|r|}
\cline{2-11}
\multicolumn{1}{c|}{} & \multicolumn{2}{c|}{\textbf{rank 1}} & \multicolumn{2}{c|}{\textbf{rank 2}} & \multicolumn{2}{c|}{\textbf{rank 3}} & \multicolumn{2}{c|}{\textbf{rank 4}} & \multicolumn{2}{c|}{\textbf{rank 5}} \\
\multicolumn{1}{c|}{} & all & n.t. & all & n.t. & all & n.t. & all & n.t. & all & n.t. \\
\hline
\textbf{deg 1} & $\phantom{0}1.00$ & $\phantom{0}1.00$ & $\phantom{0}1.00$ & $\phantom{0}1.00$ & $\phantom{0}1.01$ & $\phantom{0}1.01$ & $\phantom{0}1.00$ & $\phantom{0}1.00$ & $\phantom{0}0.98$ & $\phantom{0}0.98$ \\
\textbf{deg 2} & $\phantom{0}0.96$ & $\phantom{0}0.95$ & $\phantom{0}0.99$ & $\phantom{0}0.99$ & $\phantom{0}1.00$ & $\phantom{0}1.00$ & $\phantom{0}0.99$ & $\phantom{0}0.99$ & $\phantom{0}0.99$ & $\phantom{0}0.99$ \\
\textbf{deg 3} & $\phantom{0}0.99$ & $\phantom{0}0.95$ & $\phantom{0}0.98$ & $\phantom{0}0.98$ & $\phantom{0}1.00$ & $\phantom{0}1.00$ & $\phantom{0}0.99$ & $\phantom{0}0.99$ & $\phantom{0}1.02$ & $\phantom{0}1.02$ \\
\textbf{deg 4} & $\phantom{0}1.09$ & $\phantom{0}0.89$ & $\phantom{0}0.99$ & $\phantom{0}0.99$ & $\phantom{0}0.98$ & $\phantom{0}0.98$ & $\phantom{0}0.98$ & $\phantom{0}0.98$ & $\phantom{0}1.02$ & $\phantom{0}1.02$ \\
\textbf{deg 5} & $\phantom{0}2.08$ & $\phantom{0}1.08$ & $\phantom{0}0.98$ & $\phantom{0}0.98$ & $\phantom{0}1.00$ & $\phantom{0}1.00$ & $\phantom{0}0.99$ & $\phantom{0}0.99$ & $\phantom{0}1.02$ & $\phantom{0}1.02$ \\
\textbf{deg 6} & $\phantom{0}6.06$ & $\phantom{0}1.06$ & $\phantom{0}0.98$ & $\phantom{0}0.98$ & $\phantom{0}0.98$ & $\phantom{0}0.98$ & $\phantom{0}1.04$ & $\phantom{0}1.04$ & $\phantom{0}1.03$ & $\phantom{0}1.03$ \\
\textbf{deg 7} & $26.07$ & $\phantom{0}1.07$ & $\phantom{0}0.97$ & $\phantom{0}0.97$ & $\phantom{0}1.04$ & $\phantom{0}1.04$ & $\phantom{0}0.94$ & $\phantom{0}0.94$ & $\phantom{0}0.98$ & $\phantom{0}0.98$ \\
\textbf{deg 8} & \small{${>}100$} & $\phantom{0}0.85$ & $\phantom{0}1.01$ & $\phantom{0}1.01$ & $\phantom{0}1.05$ & $\phantom{0}1.05$ & $\phantom{0}0.99$ & $\phantom{0}0.99$ & $\phantom{0}0.95$ & $\phantom{0}0.95$ \\
\hline
\end{tabular}

\caption{Statistics of Wieferich places with $q = 5$}
\label{fig:stats:q5}
\end{figure}

Theorems~\ref{th:proba:parameter} and~\ref{th:proba:independence}
provide very precise informations on the random variables 
$W_{r,\p}$ when the rank $r$ is large enough. On the contrary,
the situation in small rank is far less clear.

In order to get a better feeling on this, we have conducted
numerical simulations for various values of $q$, $r$ (the rank)
and~$d$ (the degree). The results are reported in the tables of 
Figures~\ref{fig:stats:q2}--\ref{fig:stats:q5}. The columns
labelled ``all'' correspond to all small Drinfeld models with
the prescribed rank, while the columns labelled ``n.t.'' 
correspond only to those Drinfeld models for which $1$ is
not a torsion point. Indeed, when $1$ is torsion, every place
is Wieferich and it seems to us that this could distort the
statistics.

For each choice of the pair $(q,r)$, we sampled $10,\!000$ random 
Drinfeld models (expect if there were less than $10,\!000$, 
in which case, we have considered all of them) and reported in
each cell the empiric value of
$$\frac{q^d}{\text{Card}\: \mathcal P_d} \cdot
  \sum_{\p \in \mathcal P_d} W_{r,\p}$$
where $\mathcal P_d$ denotes the set of all places of $A$ of degree
$d$ (as in the proof of Corollary~\ref{cor:proba:independence}).
Those values are then expected to be close to $1$, at least if our 
heuristic that a place of degree $d$ is Wieferich with probability
$q^{-d}$ is correct.

We see in the tables that it is indeed the case when the degree 
remains small compared to the rank; this is in line with
Theorem~\ref{th:proba:parameter}. On the contrary, when the
degree gets higher, the behaviour looks more erratic with
entries attaining very large values. We notice nonetheless
that the bounds of Theorems~\ref{th:proba:parameter}
and~\ref{th:proba:independence} look pessimistic: the expected
behaviour seems to occur much earlier than what they claim.

Despite all of this, it is still unclear to us if expecting an
infinite number of Wieferich places for a given Drinfeld model
is reasonable or not. In any case, we emphasize that 
Theorems~\ref{th:proba:parameter} and~\ref{th:proba:independence}
do not imply such a result in average. They actually even cannot
ensure the existence of a single Drinfeld model admitting an
infinite number of Wieferich places\footnote{It is instructive
to compare with the following situation. If $\p$ is a fixed place
of degree $d$ (over $\FF$), a random polynomial of degree $r \geq
d$ is a multiple of $\p$ with probability $q^{-d}$ and those events
are independant when $r$ is large enough. However, obviously, a
given polynomial cannot be divisible by an infinite number of
places.}. 
In order to have more evidences on this question, we have run
further experiments in the special case of the Carlitz module.
Here is what we found.

\begin{itemize}
\item
For $q = 2$, all places are Wieferich expect those of degree $1$.

\item
For $q = 3$, we have looked for Wieferich places until the degree
$24$ (which corresponds to a total of $18,\!054,\!379,\!372$ places) and found
$4$ Wieferich places, namely
\begin{enumerate}[label=(\arabic{enumi}),topsep=0pt,itemsep=\parsep,parsep=0pt]
\item $t^6 + t^4 + t^3 + t^2 + 2t + 2$,
\item $t^9 + t^6 + t^4 + t^2 + 2t + 2$,
\item $t^{12} + 2t^{10} + t^9 + 2t^4 + 2t^3 + t^2 + 1$,
\item $t^{15} + t^{13} + t^{12} + t^{11} + 2t^{10} + 2t^7 
       + 2t^5 + 2t^4 + t^3 + t^2 + t + 1$.
\end{enumerate}

\item
For $q = 4$, we have looked for Wieferich places until the degree
$17$ (which corresponds to a total of $1,\!376,\!854,\!004$ places) and found
$2$ Wieferich places, namely
\begin{enumerate}[label=(\arabic{enumi}),topsep=0pt,itemsep=\parsep,parsep=0pt]
\item $t^2 + t + z$,
\item $t^2 + t + (z + 1)$,
\end{enumerate}
where $z \in \FF_4$ is a solution of $z^2 + z + 1 = 0$.

\item
For $q = 5$, we have looked for Wieferich places until the degree
$17$ (which corresponds to a total of $57,\!005,\!914,\!349$ places) and found
$2$ Wieferich places, namely
\begin{enumerate}[label=(\arabic{enumi}),topsep=0pt,itemsep=\parsep,parsep=0pt]
\item $t^5 + 4t + 1$,
\item $t^{10} + 3t^6 + 4t^5 + t^2 + t + 1$.
\end{enumerate}

\end{itemize}
Again, the conclusion is unclear.

\appendix

\section{Appendix}

In this appendix, we make the connection between Drinfeld models and their $L$-series as they are defined in the present paper and other classical definitions (in slightly different contexts) that one finds in the literature.
In particular, we establish a comparison theorem relating the $L$-series considered in this paper and those of~\cite{caruso-gazda}.

Throughout the appendix, we keep the notation of the paper; that is $\FF$ is a finite field with $q$ elements and $A=\FF[t]$.

\subsection{Models of Drinfeld modules: a geometric definition}\label{sec:models}
Let $r$ be a positive integer. In the body of the text, we defined a model of a Drinfeld $A$-module of rank $r$ over $A$ to be an $\FF$-algebra homomorphism $\phi:A\to A\{\tau\}$ satisfying
\begin{enumerate}[label=(M)]
\item\label{item:M} As a polynomial in $\tau$, $\phi(t)$ has degree $r$ and constant term $t$.
\end{enumerate}
Below, we explain why this elementary definition matches the usual geometric one. In algebraic geometry, a model of a Drinfeld module over $A$ is generally defined as an $A$-module scheme over $\Spec A$, Zariski-locally isomorphic to~$\mathbb{G}_a$, whose generic fiber is a Drinfeld module over $\FF(t)$ (see \cite[Definition 3.7]{hartl}). 
In what follows, we will prove the probably already well-known result.
\begin{prop}\label{prop:models-match}
There is a one-to-one correspondence
\[
\left\{
\begin{array}{c}
\mathrm{models~of~Drinfeld~modules} \\
\mathrm{of~rank~}r\mathrm{~over~}A
\end{array}
\right\}
~ \longleftrightarrow ~
\left\{
\begin{array}{c}
\FF\mathrm{-algebra~homomorphisms} \\
\phi:A\to A\{\tau\}\mathrm{~satisfying~}\ref{item:M}
\end{array}
\right\}.
\]
In addition, isomorphic models correspond to the same homomorphism. 
\end{prop}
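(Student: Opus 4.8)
The plan is to strip the geometric definition down to linear algebra and recognize condition~\ref{item:M} on the nose. Recall (see \cite[Definition~3.7]{hartl}) that a model of a Drinfeld module of rank $r$ over $A$ is the datum of an \emph{additive line bundle} $\mathcal L$ over $\Spec A$---a line bundle equipped with its natural structure of $\FF$-module scheme, so in particular Zariski-locally isomorphic to $\mathbb G_a$---together with an $\FF$-algebra homomorphism $\phi$ from $A$ to the ring $\End_\FF(\mathcal L)$ of $\FF$-linear endomorphisms of $\mathcal L$, subject to the requirement that the induced action of $A$ on $\mathrm{Lie}(\mathcal L) \cong \mathcal O_{\Spec A}$ be the tautological one and that the generic fibre be a Drinfeld module of rank $r$ over $K$.

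First I would trivialize $\mathcal L$. Since $A = \FF[t]$ is a polynomial ring over a field, $\mathrm{Pic}(\Spec A) = 0$, so $\mathcal L$ is isomorphic, as an additive line bundle, to $\mathbb G_{a,A}$; fixing such an isomorphism transports $\phi$ to an $\FF$-algebra homomorphism $A \to \End_\FF(\mathbb G_{a,A})$. Next I would invoke the classical identification $\End_\FF(\mathbb G_{a,R}) = R\{\tau\}$ for any $\FF$-algebra $R$: an endomorphism of $\mathbb G_{a,R}$ as an $R$-scheme fixing the origin is a polynomial $f \in R[X]$ with $f(0) = 0$; additivity forces $f(X) = \sum_i a_i X^{p^i}$; and $\FF$-linearity cuts this down to $f(X) = \sum_i g_i X^{q^i}$, composition of two such maps matching the rule $\tau c = c^q\tau$. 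Taking $R = A$, the model becomes an $\FF$-algebra homomorphism $\phi : A \to A\{\tau\}$, and it remains to check that the two surviving conditions are exactly~\ref{item:M}. Writing $\phi_t = \sum_{i \ge 0} g_i\tau^i$, the action of $\phi_t$ on $\mathrm{Lie}(\mathbb G_{a,A}) = A$ is multiplication by $g_0$ (all higher terms have vanishing derivative at the origin in characteristic $p$); since taking the constant term is a ring homomorphism $A\{\tau\} \to A$, the requirement that $a \mapsto (\text{constant term of }\phi_a)$ be the identity of $A$ is equivalent to $g_0 = t$. On the other hand, the generic fibre is $\mathbb G_{a,K}$ with the $A$-action $\phi \otimes_A K$, which is a Drinfeld module of rank $r$ over $K$ precisely when $\phi_t$ has $\tau$-degree $r$---and since $A \hookrightarrow K$ this says $\deg_\tau\phi_t = r$ already over $A$. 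Together these two points are~\ref{item:M}.

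For the converse, from $\phi : A \to A\{\tau\}$ satisfying~\ref{item:M} I would take $\mathbb G_{a,A}$ with $A$ acting through $\phi$: this is an additive line bundle carrying an $\FF$-linear $A$-action by construction, the action of $\phi_a$ on the Lie algebra is multiplication by the constant term of $\phi_a$, and $a \mapsto (\text{constant term of }\phi_a)$ is an $\FF$-algebra endomorphism of $A$ fixing $t$ by~\ref{item:M}, hence the identity---so the Lie-algebra requirement holds---and the generic fibre has rank $r$ by the computation above. These two constructions are visibly mutually inverse, which yields the bijection. Finally, an isomorphism between two models over $A$ becomes, once both line bundles are trivialized, a unit of $A\{\tau\}$; since $A$ is a domain this unit lies in $A^\times = \FF^\times$, and every $c \in \FF^\times$ satisfies $c^q = c$, so it is central in $A\{\tau\}$ and conjugation by it is trivial. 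Hence isomorphic models give the same homomorphism, as claimed.

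The only substantial inputs are the vanishing of $\mathrm{Pic}(\Spec \FF[t])$ and the identification $\End_\FF(\mathbb G_{a,R}) = R\{\tau\}$, both standard; the step where I would be most careful is the faithful translation of the ``$A$-module scheme over $\Spec A$'' condition---the Lie-algebra / characteristic requirement---into the normalization $g_0 = t$ of the constant term of $\phi_t$, where the conventions of \cite{hartl} must be matched precisely.
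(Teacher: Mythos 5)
Your proof is correct and follows essentially the same route as the paper's: trivialize the line bundle via the vanishing of $\mathrm{Pic}(\Spec A)$, identify $\End_{\FF}(\mathbb G_{a,A})$ with $A\{\tau\}$, translate the normalization and rank conditions into \ref{item:M}, and deduce well-definedness from $\mathrm{Aut}(\mathbb G_{a,A}) = A\{\tau\}^\times = \FF^\times$. The only stylistic divergence is that you phrase the normalization via the Lie-algebra/characteristic condition of Hartl rather than via a choice of coordinates on the generic fibre, and you compress the cocycle descent that the paper spells out in its Proposition~\ref{prop:Ga}; neither affects correctness.
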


We begin with general notations. Let $R$ be a (commutative, unitary) $\FF$-algebra. We let $\mathbb{G}_{a,R}$, or simply $\mathbb{G}_{a}$ for short, be the additive $\FF$-vector space scheme, \emph{i.e.} it is the functor from the category of $R$-algebras to that of $\FF$-vector spaces which forgets the $R$-algebra structure and only retain that of an $\FF$-vector space. The $q$-power map on $R$ induces an endomorphism of $\mathbb{G}_{a,R}$ which we denote by $\tau$. Any endomorphism of $\mathbb{G}_{a,R}$ can be uniquely written as a polynomial in $\tau$. That is, as rings,
\[
\End_{\FF\text{-vs}/R}(\mathbb{G}_{a,R})=R\{\tau\}
\]
(\emph{cf}. \cite[Lemma~3.2]{hartl}).
Let $K=\FF(t)$. We recall the definition of Drinfeld modules over $K$, after \cite[Definition~3.7]{hartl}.
\begin{deftn}\label{def:DM}
A \emph{Drinfeld module} $E$ over $K$ of rank $r$ is an $A$-module scheme over $K$ satisfying the following two properties:
\begin{enumerate}[label=$(\roman*)$]
\item\label{item:coordinate} As an $\FF$-vector space scheme over $K$, it is isomorphic to $\mathbb{G}_{a,K}$.
\item If $\kappa:E\stackrel{\sim}{\longrightarrow} \mathbb{G}_{a,K}$ is such an isomorphism and $\phi:A\to \End_{\FF\text{-vs}/K}(E)$ denotes the $A$-module scheme structure on $E$, the composition $\phi^{\kappa}(t):=\kappa^{-1} \circ \phi(t) \circ \kappa$ in $\End_{\FF\text{-vs}/K}(\mathbb{G}_{a,K})$ satisfies condition \ref{item:M} in $K\{\tau\}$.  
\end{enumerate}
An isomorphism $\kappa:E\stackrel{\sim}{\longrightarrow}\mathbb{G}_{a,K}$ as in \ref{item:coordinate} will be called \emph{a choice of coordinates for $E$}.
\end{deftn}

\begin{ex}
The most basic example of a Drinfeld module is the \emph{Carlitz module} which, as an $\FF$-vector space scheme, is equal to $\mathbb{G}_{a,K}$ with the $A$-module structure determined by $\phi^{\mathrm{id}}(t)=t+\tau$. 
\end{ex}

The objects of interest in this paper are not the Drinfeld modules themselves, but rather their integral models. We define them as follows. 

\begin{deftn}\label{def:model-of-DM}
Let $E$ be a Drinfeld module over $K$.
An \emph{integral model for $E$} is an $A$-module scheme $\cE$ over $A$ which is Zariski locally isomorphic to $\mathbb{G}_{a}$ as an $\FF$-vector space scheme and whose generic fiber $\cE\times_{\Spec A}\Spec \FF(t)$ is isomorphic to $E$. 
\end{deftn}

In fact, we claim that the mention ``Zariski locally'' is unnecessary in the previous definition, as long as we are over the principal ideal domain $A$.
\begin{prop}\label{prop:Ga}
Let $E$ be a Drinfeld module over $K$.
Any integral model of $E$ is isomorphic to $\mathbb{G}_a$ as an $\FF$-vector space scheme over $A$.
\end{prop}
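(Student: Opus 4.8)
The plan is to forget the $A$-module structure and the generic fibre entirely, and to show that an $\mathbb F$-vector space scheme over $A=\mathbb F[t]$ which is Zariski-locally isomorphic to $\mathbb{G}_a$ — that is, a Zariski form of $\mathbb{G}_a$ — is automatically trivial. By definition of an integral model, there is a Zariski open cover $(U_i)$ of $\Spec A$ together with isomorphisms of $\mathbb F$-vector space schemes $\kappa_i\colon \mathcal E|_{U_i}\xrightarrow{\ \sim\ }\mathbb{G}_{a,U_i}$, and $\mathcal E$ is obtained by gluing copies of $\mathbb{G}_a$ along the cocycle $g_{ij}:=\kappa_i\circ\kappa_j^{-1}\in\operatorname{Aut}_{\mathbb F\text{-vs}/U_{ij}}(\mathbb{G}_a)$ on the overlaps $U_{ij}:=U_i\cap U_j$. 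What I would prove is that $\mathcal E\cong\mathbb{G}_a$ as an $\mathbb F$-vector space scheme if and only if this cocycle is a coboundary, which by descent is the content of the bijection between isomorphism classes of such forms and $H^1_{\mathrm{Zar}}\big(\Spec A,\underline{\operatorname{Aut}}_{\mathbb F\text{-vs}}(\mathbb{G}_a)\big)$, the trivial class corresponding to $\mathbb{G}_a$ itself.

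The crucial step is to identify this automorphism sheaf. By \cite[Lemma~3.2]{hartl}, $\operatorname{End}_{\mathbb F\text{-vs}/R}(\mathbb{G}_{a,R})=R\{\tau\}$ for every $\mathbb F$-algebra $R$, so an automorphism is a unit of $R\{\tau\}$. When $R$ is a domain one has $R\{\tau\}^\times=R^\times$: writing $u=\sum_{i=0}^m u_i\tau^i$ and $v=\sum_{j=0}^n v_j\tau^j$ with $u_m,v_n\neq 0$, the coefficient of $\tau^{m+n}$ in $uv$ equals $u_m v_n^{q^m}\neq 0$, whence $\deg(uv)=m+n$, and this forces $m=n=0$ when $uv=1$. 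Now $\Spec A$ is integral, so every nonempty open $U\subseteq\Spec A$ has $\mathcal O(U)$ a subring of the function field $K$, in particular a domain; therefore, on the small Zariski site of $\Spec A$, the sheaf $\underline{\operatorname{Aut}}_{\mathbb F\text{-vs}}(\mathbb{G}_a)$ is canonically identified with $\mathbb{G}_m$.

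Putting the two steps together, the class of $\mathcal E$ lives in $H^1_{\mathrm{Zar}}(\Spec A,\mathbb{G}_m)=\operatorname{Pic}(A)$, and since $A=\mathbb F[t]$ is a principal ideal domain, $\operatorname{Pic}(A)=0$, so the class vanishes and $\mathcal E\cong\mathbb{G}_a$. Equivalently, avoiding sheaf cohomology altogether, one may work with a finite affine cover: the transition units $g_{ij}\in\mathcal O(U_{ij})^\times$ then form a $\mathbb{G}_m$-cocycle on $\Spec\mathbb F[t]$, which is a coboundary because $\operatorname{Pic}(\mathbb F[t])=0$, so one can modify the $\kappa_i$ by suitable global units to make them agree on all overlaps and thereby glue to a global isomorphism. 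In either presentation, note that neither the $A$-module structure on $\mathcal E$ nor the hypothesis on its generic fibre is used.

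The only point deserving genuine care — and hence the ``main obstacle'', such as it is — is that the automorphism \emph{group scheme} of $\mathbb{G}_a$ is far larger than $\mathbb{G}_m$ over an arbitrary base, since it involves all Frobenius twists; one really has to exploit that the rings arising as sections of the structure sheaf over the opens of $\Spec\mathbb F[t]$ are reduced, in fact domains, in order to collapse $R\{\tau\}^\times$ down to $R^\times$. Everything else is formal descent together with the vanishing of the Picard group of a principal ideal domain.
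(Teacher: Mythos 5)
Your proof is correct and follows essentially the same route as the paper: identify the gluing cocycle as a class in $H^1_{\mathrm{Zar}}(\Spec A, \mathbb{G}_m) = \operatorname{Pic}(A) = 0$ via the collapse of $R\{\tau\}^\times$ to $R^\times$, and descend. The only difference is that you spell out the degree argument showing $R\{\tau\}^\times = R^\times$ for $R$ a domain, which the paper leaves implicit.
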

\begin{proof}
Let $\cE$ be an integral model of $E$. By assumption, there exists a finite covering of $\Spec A$ by affine schemes $\{\Spec R_i\}_{i\in I}$, where each $R_i\subset K$ is a Zariski-localization of $A$, together with compatible isomorphisms of $\FF$-vector space schemes over each $R_i$
\[
\kappa_i:\cE\times_{\Spec A} \Spec R_i \stackrel{\sim}{\longrightarrow} \mathbb{G}_{a,R_i}. 
\]
For $(i,j)\in I^2$ we set $R_{ij}:= R_i\otimes_{A}R_j$. Note that $\Spec R_{ij}$ agrees with the intersection $(\Spec R_i)\cap (\Spec R_j)$. The composition
\[
\mathbb{G}_{a,R_{ij}}\stackrel{\kappa_j^{-1}}{\longrightarrow} E\times_{\Spec A} \Spec R_{ij} \stackrel{\kappa_i}{\longrightarrow} \mathbb{G}_{a,R_{ij}}
\]
defines an $\FF$-linear automorphism of $\mathbb{G}_{a,R_{ij}}$, hence an element $d_{ij}\in R_{ij}\{\tau\}^{\times}=R_{ij}^{\times}$. One checks that the data of $(\Spec R_{ij}, d_{ij})_{i,j\in I}$ forms a cocycle (\emph{i.e.} a Cartier divisor) in 
\[
\mathrm{H}^1_{\mathrm{Zar}}(\Spec A, \mathbb{G}_m).
\]
Yet, the latter is trivial as it identifies with the Picard group of $A$. We deduce the existence of invertible elements $d_i\in R_i^{\times}$, $i\in I$, for which $d_{ij}=d_i^{-1}d_j$. We may now perform Zariski descent over the family of isomorphisms $(d_i \kappa_i:E\times \Spec R_i\to \mathbb{G}_{a,R_i})_{i\in I}$ to obtain an isomorphism $\cE\stackrel{\sim}{\to} \mathbb{G}_{a,A}$ over $\Spec A$.
\end{proof}

We pursue by proving existence of integral models.

\begin{prop}
Any Drinfeld module $E$ over $K$ admits an integral model.
\end{prop}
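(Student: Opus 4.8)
The plan is to exhibit an integral model explicitly, by clearing denominators after passing to suitable coordinates.

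First I would fix a choice of coordinates $\kappa : E \stackrel{\sim}{\longrightarrow} \mathbb{G}_{a,K}$ and set $\phi := \phi^{\kappa}$, so that by Definition~\ref{def:DM} we have $\phi_t = t + g_1\tau + \cdots + g_r\tau^r$ with $g_1,\ldots,g_r \in K$ and $g_r \neq 0$. Since each $g_i$ lies in $K = \Frac A$, we may pick $d \in A\setminus\{0\}$ with $d g_i \in A$ for all $i$ (a common denominator). Using the relation $\tau^i c = c^{q^i}\tau^i$ for $c \in K$, a direct computation in $K\{\tau\}$ yields
\[
d^{-1}\,\phi_t\, d = t + \sum_{i=1}^{r} g_i\, d^{q^i-1}\,\tau^i .
\]
Because $q \geq 2$, we have $q^i - 2 \geq 0$ for every $i \geq 1$, hence $g_i d^{q^i-1} = (d g_i)\cdot d^{q^i - 2}$ belongs to $A$; moreover $g_r d^{q^r-1} \neq 0$. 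Consequently, the assignment $t \mapsto \psi_t := d^{-1}\phi_t d$ extends uniquely (as $A$ is a polynomial ring over $\FF$) to an $\FF$-algebra homomorphism
\[
\psi : A \longrightarrow A\{\tau\} = \End_{\FF\text{-vs}/A}(\mathbb{G}_{a,A}),
\]
whose value at $t$ has $\tau$-degree $r$ and constant term $t$.

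Next I would let $\cE$ be the $A$-module scheme over $A$ whose underlying $\FF$-vector space scheme is $\mathbb{G}_{a,A}$ and whose $A$-action is the one induced by $\psi$. By construction $\cE$ is globally, hence \emph{a fortiori} Zariski-locally, isomorphic to $\mathbb{G}_a$ as an $\FF$-vector space scheme, so it only remains to identify its generic fiber with $E$. That generic fiber is $\mathbb{G}_{a,K}$ endowed with the $A$-action $\psi$, now read in $K\{\tau\}$. The multiplication-by-$d$ map $m_d : \mathbb{G}_{a,K} \stackrel{\sim}{\longrightarrow} \mathbb{G}_{a,K}$ is an isomorphism of $\FF$-vector space schemes, and the equality $\psi_a = d^{-1}\phi_a d$ — valid for all $a \in A$ since both members are $\FF$-algebra homomorphisms agreeing at $t$ — shows that $m_d$ intertwines $\psi$ with $\phi$. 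Composing with $\kappa^{-1} : \mathbb{G}_{a,K} \stackrel{\sim}{\longrightarrow} E$, which intertwines $\phi$ with the $A$-module scheme structure of $E$ by the very definition of $\phi^{\kappa}$, produces an isomorphism of $A$-module schemes between the generic fiber of $\cE$ and $E$. Thus $\cE$ is an integral model of $E$ in the sense of Definition~\ref{def:model-of-DM}.

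There is no real obstacle in this argument: the one point deserving attention is that a single denominator $d$ suffices to clear all the $g_i$ simultaneously \emph{after} conjugation, which works precisely because the exponents $q^i - 1$ occurring in the twist are all at least $q - 1 \geq 1$ (equivalently $q^i - 2 \geq 0$), so the conjugation creates no new poles. Everything else is a routine unwinding of the definitions of Drinfeld module and of integral model, together with the conjugation formula in $K\{\tau\}$.
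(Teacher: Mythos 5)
Your argument is correct and coincides with the paper's own proof: in both cases one fixes coordinates $\kappa$, writes $\phi^\kappa_t = t + g_1\tau + \cdots + g_r\tau^r$ over $K$, and conjugates by multiplication by a common denominator $d$ of the $g_i$ so that the resulting $A$-module structure on $\mathbb{G}_{a,A}$ has coefficients in $A$ and specializes generically to $E$ via $m_d \circ \kappa$. The only cosmetic difference is that the paper takes $d$ to be the LCM of the denominators while you take any common denominator; the key observation, that conjugation produces the exponents $q^i - 1 \geq 1$ so no new poles appear, is the same.
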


\begin{proof}
We fix a choice of coordinates $\kappa:E\stackrel{\sim}{\to} \mathbb{G}_{a,K}$. The $A$-module structure on $E$ induces one on $\mathbb{G}_{a,K}$ and, as such, it induces a ring homomorphism $\phi^{\kappa}:A\to K\{\tau\}$. We let $g_i\in K$ be the $i$-th coefficient of $\phi^{\kappa}(t)$ as a polynomial in $\tau$, \emph{i.e.}
\[
\phi^{\kappa}(t)=t+g_1\tau+\cdots+g_r\tau^r.
\]
Let $d\in A$ be the least common multiple of the denominators of the $g_i$ for $i\in \{1,\ldots,r\}$. Let also $m_d$ be the multiplication by $d$ on $\mathbb{G}_{a,K}$. Then,
\[
\phi^{\kappa\circ m_d}(t)=m_d^{-1}\circ \phi^{\kappa}(t)\circ m_d=t+g_1d^{q-1}\tau+\ldots+g_r d^{q^r-1}\tau^r\in A\{\tau\}.
\] 
Because $m_d$ is an isomorphism, $\kappa':=m_d\circ \kappa$ defines an isomorphism from $E$ to $\mathbb{G}_{a,K}$ such that $\phi^{\kappa'}$ takes its values in $A\{\tau\}$. Therefore, setting $\cE := \mathbb{G}_{a,A}$ as an $\FF$-vector space scheme and equipping it with the $A$-module structure given by $\phi^{\kappa'}$ produces an integral model of~$E$. 
\end{proof}

We are in position to prove Proposition \ref{prop:models-match} stated above.
\begin{proof}[Proof of Proposition \ref{prop:models-match}]
Let $\phi:A\to A\{\tau\}$ be an $\FF$-algebra homomorphism satisfying \ref{item:M}. Let $\mathcal{E}$ be $\mathbb{G}_{a,A}$ as an $\FF$-vector space scheme and endow $\mathcal{E}$ with an $A$-module scheme structure via $\phi$, seen as a map $A\to \End_{\FF\text{-vs}/A}(\mathcal{E})$. Then $\mathcal{E}$ is an integral model (of the Drinfeld module $\mathcal{E}\times_{A}K$). 

Conversely, let $\cE$ be an integral model of a Drinfeld module in the sense of Definition \ref{def:model-of-DM}. By Proposition \ref{prop:Ga}, there exists an isomorphism $\kappa:\cE\stackrel{\sim}{\to} \mathbb{G}_{a,A}$. The $A$-module scheme structure on $\cE$ induces a ring homomorphism $\phi^{\kappa}:A\to A\{\tau\}$ which satisfies condition \ref{item:M}. We claim that $\phi^{\kappa}$ is independent of $\kappa$: indeed, another choice $\kappa'$ would differ from $\kappa$ by an element of $\mathrm{Aut}(\mathbb{G}_{a,A})=A\{\tau\}^{\times}=\FF^{\times}$. This implies $\phi^{\kappa}=\phi^{\kappa'}$. 
\end{proof}

\subsection{Drinfeld modules and Anderson motives}
\label{app:anderson}

We now make the bridge between the notation and constructions of this article with those of \cite{caruso-gazda}.
We will prove in particular a comparison theorem for their $L$-series, giving then a full justification that Theorem~\ref{thm:order} is concerned with similar objects than the main conjecture of \cite{caruso-gazda}.

\paragraph{Anderson motives.}

Let $F$ be $A$-field, that is, by definition, a field equipped with a ring homomorphism $\gamma : A \to F$.
We consider the tensor product\footnote{By convention, all unlabeled tensor products are taken over $\FF$.} $A_F := A \otimes F$; it is canonically isomorphic to $F[t]$ since $A = \Fq[t]$.
In a slight abuse of notation, we continue to denote by $t$ the element $t \otimes 1$ of $A_F$. We also set $\theta := 1 \otimes \gamma(t) \in A_F$.
We extend the Frobenius of $F$ to a endomorphism of $A$-algebras $\tau : A_F \to A_F$; hence $\tau(a \otimes x) = a \otimes x^q$ for $a \in A$ and $x \in F$.

In order to handle smoothly semi-linearity, we will often use the following classical notation:
when $M$ is a module over $A_F$, we write $\tau^* M := A_F \otimes_{A_F} M$ where $A_F$ is viewed as an algebra over itself \emph{via} $\tau$.
If $M$ and $N$ are modules over $A_F$, giving a $\tau$-semilinear map $M \to N$ is equivalent to giving an $A_F$-linear map $\tau^* M \to N$.
In a similar fashion, if $x \in M$, we write $\tau^* x$ for $1 \otimes x \in \tau^* M$.

\begin{deftn}
An \emph{Anderson motive} over $F$ is a finite free $A_F$-module $M$ equipped with a $A_F$-linear isomorphism
\[ \textstyle
\tau_M : \tau^* M\big[\frac 1{t-\theta}\big] \stackrel\sim\longrightarrow M\big[\frac 1{t-\theta}\big].
\]
\end{deftn}

One can attach an Anderson motive to any Drinfeld module $\phi=\phi^\kappa : A \to F\{\tau\}$ (after a choice of coordinates $\kappa$) using the following recipe.
We simply set $\M(\phi) := F\{\tau\}$. We equip it with the structure of $A_F$-module defined by
\[
(a \otimes x) \bullet f := x f \phi_a \qquad (a \in A, x \in F, f \in \M(\phi))
\]
where the product on the right hand side is the usual product in the ring $F\{\tau\}$.
Finally, the morphism $\tau_{\M(\phi)}$ is by definition the right multiplication by $\tau$ in $F\{\tau\}$.

It is routine to check that $\M(\phi)$ is indeed an Anderson motive.
More precisely, if we write $\phi_t = \gamma(t) + g_1 \tau + \cdots + g_r \tau^{r}$ with $g_i \in F$, $g_r \neq 0$, one proves using Euclidean division in $F\{\tau\}$ that $\M(\phi)$ is free of rank $r$ over $A_F$ and an explicit basis of it is given by $u_i := \tau^i$ for $i$ varying between $0$ and $r{-}1$ (see \cite[\S 2.2.1]{caruso-leudiere} for more details).
Moreover, the action of $\tau_{\M(\phi)}$ is explicitely given by
\begin{align}
\tau_{\M(\phi)}(\tau^* u_i) & = u_{i+1} \qquad (0 \leq i \leq r-2) \nonumber \\
\tau_{\M(\phi)}(\tau^* u_{r-1}) & = g_r^{-1} \cdot \big( (t{-}\theta) u_0 - g_1 u_1 - \cdots - g_{r-1} u_{r-1} \big). \label{eq:Mphi}
\end{align}
On this writing, we see that $\tau_{\M(\phi)}$ induces an isomorphism $\tau^* \M(\phi)\big[\frac 1{t-\theta}\big] \to \M(\phi)\big[\frac 1{t-\theta}\big]$ as desired.

\paragraph{Anderson models.}

Likewise Drinfeld models are integral versions of Drinfeld modules, Anderson models are defined as integral structures inside Anderson motives.
There are only defined for particular base fields $F$, including notably the fraction field $K$ of $A$.
In order to avoid confusion, from now on, we denote by $\theta$ the variable on $K$: we set $K := \FF(\theta)$ and endow it with a structure of $A$-field \emph{via} $\gamma : A \to K$, $t \mapsto \theta$.
The notation is then coherent with what precedes.

We also set $R := \FF[\theta]$ and $A_R := A \otimes R \simeq \FF[t,\theta]$. It is a subring of $A_K$.

\begin{deftn}
\label{def:andersonmodel}
Let $M$ be an Anderson motive over $K$.
A \emph{model} of $M$ is a finitely generated projective $A_R$-module $M_R$ which generates $M$ as an $A_K$-module and which is stable by $\tau_M$ in the following sense:
\[ \textstyle
\tau_M\big(\tau^* M_R\big) \subset M_R\big[\frac 1{t-\theta}\big].
\]
\end{deftn}

Unfortunately, the division by $g_r$ in Equation~\eqref{eq:Mphi} indicates that the $A_R$-submodule generated by the canonical basis of $\M(\phi)$ is usually not an Anderson model even if $\phi$ is itself a Drinfeld model.
One can nevertheless recover this important property by passing to the dual.

\begin{deftn}
Let $M$ be an Anderson motive over an $A$-field $F$.
The \emph{dual} of $M$ is $M^\vee := \mathrm{Hom}_{A_F}(M, A_F)$ equipped with the map $\tau_{M^\vee}$ defined as follows:
for all $f \in \tau^* M^\vee = \mathrm{Hom}_{A_F}(\tau^*M, \tau^*A_F)$, the map $\tau_{M^\vee}(f)$ is the unique one for which the following diagram commutes.
\[
\xymatrix @C=12ex {
\tau^* M\big[\frac 1{t-\theta}\big] \ar[r]^{f} \ar[d]_-{\tau_M}^-{\sim} & \tau^* A_F \big[\frac 1{t-\theta}\big] \ar[d]^-{\tau}_-{\sim} \\
M\big[\frac 1{t-\theta}\big] \ar@{-->}[r]^{\tau_{M^\vee}(f)} & A_F \big[\frac 1{t-\theta}\big]}
\]
\end{deftn}

When $M = \M(\phi)$ for a Drinfeld model $\phi : A \to R\{\tau\}$ with $\phi_t = \theta + g_1 \tau + \cdots + g_r \tau^r$, $g_r \neq 0$, a computation shows that
\begin{equation}
\label{eq:tauMphidual}
\tau_{\M(\phi)^\vee}(\tau^* u_i^\vee) = u_{i+1}^\vee + \frac{g_{i+1}}{t{-}\theta} \cdot u_0^\vee \qquad (0 \leq i < r) 
\end{equation}
where $(u_i^\vee)_{0 \leq i < r}$ is the dual basis of the canonical basis $(u_i)_{0 \leq i < r}$ of $\M(\phi)$ and where we agree by convention that $u_r^\vee = 0$.
In particular, we notice that the $M_R$-submodule of $\M(\phi)^\vee$ generated by the $u_i^\vee$ defines a model in the sense of Definition~\ref{def:andersonmodel}.

This construction defines a rank-preserving covariant functor
\begin{equation}
\label{eq:Mvee}
\M^\vee : 
\big\{\, \mathrm{Drinfeld~models~over~}R \,\big\}
\longrightarrow
\big\{\, \mathrm{Anderson~models~over~}R \,\big\}.
\end{equation}

\paragraph{Euler factors.}

We assume that $F$ is a finite field and we let $\p$ be the monic generator of the ideal $\ker(\gamma : A \to F)$.
We set $d := \deg \p$, it is also the degree of the extension $F/\FF$.
If $M$ is an Anderson motive over $F$, we define following~\cite{caruso-gazda}
\begin{equation}
\label{eq:eulerfactor} \textstyle
P(M; T) := \mathrm{det}_{A[\frac 1{\p},T] \otimes F}\big(1 - T^d \tau_M^d \,|\, A\big[\frac 1 \p,T\big] \otimes_A M\big).
\end{equation}
We notice that $\tau_M^d$ indeed induces an endomorphism of $A[\frac 1 \p] \otimes_A M$ because of a combinaison of two facts:
first, $t{-}\theta$ is invertible in the ring $A[\frac 1 \p] \otimes_A F$ and second, $\tau^d$ is the identity on $A_F$.
It can be shown moreover that $P(M; T)$ is a polynomial with coefficients in $A[\frac 1 \p]$. We call it the \emph{Euler factor} of $M$;
it will serve as a local factor when we will define $L$-series later on.

Alternatively, $P(M; T)$ can be seen as another determinant involving the dual of $M$ instead of $M$ itself.
Indeed, remarking that $\tau_{M^\vee}^d$ is the inverse of the dual map of $\tau_M^d$, we get the relation
\begin{equation}
\label{eq:eulerdual}
P(M; T) = 
\frac{\mathrm{det}_{A[\frac 1{\p},T] \otimes F}\big(\tau_{M^\vee}^d - T^d \,|\, A\big[\frac 1 \p,T\big] \otimes_A M^\vee\big)}
     {\mathrm{det}_{A[\frac 1{\p}] \otimes F}\big(\tau_{M^\vee}^d \,|\, A\big[\frac 1 \p\big] \otimes_A M^\vee\big)}.
\end{equation}
In the above fraction, the numerator is the so-called \emph{characteristic polynomial of the Frobenius} of $M^\vee$ (up to a sign)
whereas the denominator serves as a renormalization to ensure that the constant coefficient of $P(M; T)$ is $1$.

\begin{thm}
\label{thm:eulerfactor}
Let $\phi : A \to F\{\tau\}$ be a Drinfeld module and let $\phiT$ be its $T$-twisted version as defined in Subsection~\ref{sssec:Ttwisted}.
Then
\[
P\big(\M(\phi)^\vee; T\big) = \p^{-1} \cdot |\phiT(F)|.
\]
\end{thm}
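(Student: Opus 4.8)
The plan is to compute both sides explicitly and to check that they agree, the main inputs being the defining formula \eqref{eq:eulerfactor} of the Euler factor and the explicit shape of the Frobenius of $\M(\phi)^\vee$ recorded in \eqref{eq:tauMphidual}.

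First I would read off, from \eqref{eq:tauMphidual}, that in the dual basis $(u_i^\vee)_{0\le i<r}$ the operator $\tau_{\M(\phi)^\vee}$ is the companion matrix $D$ of $Y^r-\sum_{i=1}^r\frac{g_i}{t-\theta}Y^{r-i}$, with first row $\bigl(\tfrac{g_1}{t-\theta},\dots,\tfrac{g_r}{t-\theta}\bigr)$ and $1$'s on the subdiagonal. The key observation is that $\widetilde D:=(t-\theta)D$ is a \emph{polynomial} matrix (first row $(g_1,\dots,g_r)$, subdiagonal $(t-\theta,\dots,t-\theta)$, zeros elsewhere). Since $\tau^d$ acts trivially on $A_F$, the iterate $\tau_{\M(\phi)^\vee}^d$ is $A_F[\tfrac1\p]$-linear with matrix $D\,\tau(D)\cdots\tau^{d-1}(D)=\tfrac1\p\,E$ where $E:=\widetilde D\,\tau(\widetilde D)\cdots\tau^{d-1}(\widetilde D)$ is a polynomial matrix over $A\otimes F$ and $\prod_{j=0}^{d-1}(t-\theta^{q^j})=\p$. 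Formula \eqref{eq:eulerfactor} then gives $P\bigl(\M(\phi)^\vee;T\bigr)=\mathrm{det}_{A[\frac1\p,T]\otimes F}\bigl(I_r-\tfrac{T^d}{\p}E\bigr)=\sum_{k=0}^r(-T^d)^k\,\p^{-k}\,e_k(E)$, with $e_k(E)$ the sum of the $k\times k$ principal minors of $E$; hence $\p\cdot P(\M(\phi)^\vee;T)=\sum_{k=0}^r(-T^d)^k\,\p^{\,1-k}\,e_k(E)$, whose constant term in $T$ is $\p$, matching $|\phiT(F)|_{|T=0}=\p$ as computed after \eqref{eq:localLfactor}. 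It then remains to prove $\p\cdot P(\M(\phi)^\vee;T)=|\phiT(F)|$ in $A[T]$ (which along the way forces $\p^{k-1}\mid e_k(E)$).

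To compare $\p\cdot P(\M(\phi)^\vee;T)$ with $|\phiT(F)|=\mathrm{det}_{A[T]}\bigl(t-\phiT_t\mid A\otimes F[T]\bigr)$ — the latter obtained from the resolution \eqref{eq:resolutionphiT}, where $t-\phiT_t$ acts by $y\mapsto(t-\theta)y-\sum_{i=1}^r g_iT^i\tau^i(y)$ — I would extend scalars so as to split $\p$: then $F$ becomes the product of its $d$ conjugate copies and $\tau$ becomes the cyclic shift $S$ of the $d$ factors. Under this, $|\phiT(F)|$ becomes the generalized-circulant determinant $\mathrm{det}\bigl(\mathrm{diag}(t-\theta^{q^j})-\sum_{i\ge1}\mathrm{diag}(g_i^{q^j})\,T^iS^i\bigr)$, while $\p\cdot P(\M(\phi)^\vee;T)$ becomes $\p\cdot\mathrm{det}\bigl(I_r-\tfrac{T^d}{\p}\,\widetilde D^{(q^0)}\widetilde D^{(q^1)}\cdots\widetilde D^{(q^{d-1})}\bigr)$, where $\widetilde D^{(q^j)}$ is the companion matrix of $(t-\theta^{q^j})Y^r-g_1^{q^j}Y^{r-1}-\cdots-g_r^{q^j}$. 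Expanding both and using this companion description, one checks they coincide; the cleanest sanity check is the rank-one case $\phi_t=\theta+g_1\tau$, where both sides are directly $\p-T^d\,\mathrm{Nm}_{F/\FF}(g_1)$, and the general case reduces to the corresponding identity between a product of companion matrices and a generalized circulant. The overall sign is pinned down by the agreement at $T=0$ already noted.

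The main obstacle is exactly the bookkeeping in this last step: carrying the $d-1$ Frobenius twists through the two determinants without error, and matching an $r\times r$ determinant of a product of companion matrices with a $d\times d$ generalized-circulant determinant, all while keeping precise track of signs and of the single power of $\p$ that gets cleared. A more conceptual route would be to realise $|\phiT(F)|$ as the Euler factor of an ``Anderson $T$-motive'' attached to $\phiT$ and to identify that motive with $\M(\phi)^\vee$ equipped with a $T$-twisted Frobenius, which would make the statement essentially formal; but this requires developing the theory of $T$-twisted Anderson motives, which the excerpt deliberately avoids, so the explicit computation above seems the most economical option here.
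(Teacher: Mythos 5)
Your reduction is set up correctly --- the matrix $D$ of $\tau_{\M(\phi)^\vee}$ read off from \eqref{eq:tauMphidual}, the observation that $\tau_{\M(\phi)^\vee}^d$ has matrix $\p^{-1}E$ with $E$ a polynomial matrix and $\prod_{j}(t-\theta^{q^j})=\p$, and the agreement of constant terms are all fine --- but the proof has a genuine gap at its central step. The identity
\[
\p\cdot\det\Bigl(I_r-\tfrac{T^d}{\p}\,\widetilde{D}^{(q^0)}\cdots\widetilde{D}^{(q^{d-1})}\Bigr)
=\det\Bigl(\mathrm{diag}(t-\theta^{q^j})-\textstyle\sum_{i\ge 1}\mathrm{diag}(g_i^{q^j})\,T^iS^i\Bigr)
\]
equates an $r\times r$ determinant with a $d\times d$ one, over different rings, and ``expanding both, one checks they coincide'' is not an argument: there is no common matrix of which both sides are visibly the determinant, so passing between the two sizes requires a real idea (relating both to a common $rd\times rd$ determinant, or to a reduced norm), not bookkeeping. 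This identity is essentially the statement that the characteristic polynomial of the Frobenius of $\phi$ equals $\Nrd(t-\phi_t)_{|\tau^d=T^d}$, which is a theorem in its own right --- it is \cite[Theorem~4.5]{caruso-leudiere}, cited by the paper for exactly this purpose. Your rank-one check does not illuminate the general case, since for $r=1$ the size mismatch disappears and both sides collapse to the same $1\times 1$ circulant computation.

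The ``conceptual route'' you dismiss at the end is, in a lighter form, what the paper actually does, and it does not require developing $T$-twisted Anderson motives. The paper uses \eqref{eq:eulerdual} to rewrite $P(\M(\phi)^\vee;T)$ in terms of $\tau_{\M(\phi)}^d$, invokes the Caruso--Leudi\`ere result to identify the resulting characteristic polynomial of Frobenius with $\Nrd(t-\phi_t)_{|\tau^d=T^d}$ (using that $A_F\{\tau\}$ is Azumaya over its centre $A[\tau^d]$), and then identifies $|\phiT(F)|$ with the same reduced norm via the explicit $A[T]$-linear bijection $\alpha:A_F[T]\to A_F\{\tau\}$, $\sum_i a_iT^i\mapsto\sum_i a_i\tau^i$, which intertwines $t-\phiT_t$ with left multiplication by $t-\phi_t$. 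To complete your version you must either prove the displayed determinantal identity directly (for instance by exhibiting an $rd\times rd$ block matrix computing both sides) or import that external theorem; as written, the statement you label ``the main obstacle'' is the theorem itself.
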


\begin{proof}
After Equation~\eqref{eq:eulerdual} and the fact that the constant coefficient of both $P\big(\M(\phi)^\vee; T\big)$ and $\p^{-1}{\cdot}|\phiT(F)|$ is $1$, it is enough to prove that
\begin{equation}
\label{eq:collinear}
\textstyle
\mathrm{det}_{A_F[T]}\big(T^d - \tau_{\M(\phi)}^d \,|\, \M(\phi)[T]\big)
\text{ is $A_F$-collinear to } |\phiT(F)|.
\end{equation}
We consider the Ore polynomial ring $A_F\{\tau\}$. 
Since $\tau^d$ acts on $A_F$ as the identity, $A_F\{\tau\}$ is an Azumaya algebra over its centre $A[\tau^d]$ and we have a reduced norm map
$\Nrd : A_F\{\tau\} \to A[\tau^d]$.
By \cite[Theorem~4.5]{caruso-leudiere} (see also \cite[Remark~4.6]{caruso-leudiere}), the left-hand side in~\eqref{eq:collinear},
which is the characteristic polynomial of the Frobenius of $\phi$,
is collinear to $\Nrd(t - \phi_t)_{|\tau^d = T^d}$.

We now need to connect what precedes to $|\phiT(F)|$.
For this, our starting point is the $A[T]$-linear exact sequence~\eqref{eq:resolutionphiT} that we recall below:
\[
0\longrightarrow A_F[T] \xrightarrow{t-\phiT_t} A_F[T] \xrightarrow{a \otimes x\:\mapsto\:\phiT_a(x)} \phiT(F) \longrightarrow 0.
\]
There is an obvious bijection $\alpha : A_F[T] \to A_F\{\tau\}$ simply obtained by mapping a polynomial $\sum_i a_i T^i$ (with $a_i \in A_F$) to $\sum_i a_i \tau^i$.
Of course, $\alpha$ is not a ring homomorphism. Nonetheless, it is $A$-linear and even $A[T]$-linear if we let $T$ act on $A_F\{\tau\}$ by right-multiplication by $\tau$.
Moreover, a simple computation shows that we have a commutative diagram
\[
\xymatrix @C=10ex {
A_F[T] \ar[r]^-{t - \phiT_t} \ar[d]_-{\alpha} & A_F[T] \ar[d]^-{\alpha} \\
A_F\{\tau\} \ar[r]^-{\mu} & A_F\{\tau\} 
}
\]
where the bottom map $\mu$ is the left-multiplication by $t - \phi_t$. Hence
\[
|\phiT(F)| 
 = \mathrm{det}_{A[T]}\big(t - \phiT_t \, | \, A_F[T]\big) 
 = \mathrm{det}_{A[T]}\big(\mu \, | \, A_F\{\tau\}\big) = \Nrd(t - \phi_t)_{|\tau^d = T^d}.
\]
The theorem is proved.
\end{proof}

\paragraph{$L$-series.}

We are finally ready to relate the $L$-series attached to Drinfeld models to those attached to models of Anderson motives.
First of all, we recall the definition of the latter.
We start with a model $M_R$ of an Anderson motive $M$ over $K = \FF(\theta)$.
For a place $\q$ of $R$, \emph{i.e.} a monic irreducible polynomial $\q \in R$, we form the quotient $M_\q = M_R / \q M_R$.
It is a module over $A_{\FF_\q}$ with $\FF_\q := R/\q$. Moreover, $\tau_M$ induces a map
\[ \textstyle
\tau^* M_\q \big[\frac 1{t-\theta}\big] \longrightarrow M_\q\big[\frac 1{t-\theta}\big].
\]
The latter is not necessarily an isomorphism, meaning that $M_\q$ might fail to be an Anderson motive over $\FF_\q$.
Nevertheless, the formula of Equation~\eqref{eq:eulerfactor} still makes sense and we can reuse it to defined the local factor at $\q$:
\[ \textstyle
P_\q(M_R; T) := \mathrm{det}_{A[\frac 1{\q},T] \otimes \FF_\q}\big(1 - T^d \tau_M^d \,|\, A\big[\frac 1 \q,T\big] \otimes_A M_\q\big).
\]
Finally, the $L$-series of $M_R$ is obtained by combining the local factors as follows:
\[
L(M_R; T) := \prod_{\q} \frac 1 {P_\q(M_R; T)}
\]
where the product runs over all the places $\q$ of $R$.
Similarly, the $\p$-adic $L$-series of $M_R$ is
\[
L_\p(M_R; T) := \prod_{\q \neq \p} \frac 1 {P_\q(M_R; T)}.
\]

\begin{thm}
\label{thm:compLseries}
Let $\phi : A \to R\{\tau\}$ be a Drinfeld model and let $\M^\vee(\phi)$ be the Anderson model attached to it by the functor~\eqref{eq:Mvee}.
\begin{enumerate}[label=(\theenumi)]
\item We have $L(\phi; T) = L\big(\M^\vee(\phi); T\big)$.
\item We have $L_\p(\phi; T) = L_\p\big(\M^\vee(\phi); T\big)$ for any place $\p$ of $R$.
\end{enumerate}
\end{thm}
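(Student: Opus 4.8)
The plan is to reduce the statement to the comparison of local Euler factors, place by place, using Theorem~\ref{thm:eulerfactor}. More precisely, recall that $L(\phi; T) = \prod_{\p} P_{\p}(\phi;T)^{-1}$ and $L(\M^\vee(\phi); T) = \prod_{\q} P_{\q}(\M^\vee(\phi); T)^{-1}$, where in the first product $\p$ runs over monic irreducibles of $A = \FF[t]$ and in the second $\q$ runs over monic irreducibles of $R = \FF[\theta]$. The key point is that, although these two index sets look different \emph{a priori}, they are canonically the same: a place of $R$ is the monic generator of the kernel of some $\gamma : A \to F$ with $F$ finite, and the corresponding place of $A$ in the body of the text is exactly the monic generator of the same kernel (see the computation $|\phiT(\FF_\p)|_{|T=0} = \p$ in Subsection~\ref{sec:background}). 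So the first step is to spell out this identification of index sets and check it is a bijection preserving degrees.

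Next I would compare the local factors term by term. Fix a place, call it $\p$ on the $A$-side and $\q$ on the $R$-side under the above identification, with residue field $F = \FF_\p = \FF_\q$. On the one hand, by definition $P_{\p}(\phi; T) = \p^{-1} \cdot |\phiT(\FF_\p)|$. On the other hand, the local factor $P_\q(\M^\vee(\phi)_R; T)$ of the Anderson \emph{model} is computed from the reduction $\M^\vee(\phi)_R / \q\, \M^\vee(\phi)_R$, and I claim this reduction is naturally identified with $\M(\phibar)^\vee$, where $\phibar : A \to F\{\tau\}$ is the reduction of $\phi$ modulo $\p$ (this is precisely why one passes to the dual in~\eqref{eq:Mvee}: the formula~\eqref{eq:tauMphidual} exhibits $\M^\vee(\phi)_R$ as the free $A_R$-module on $u_0^\vee, \dots, u_{r-1}^\vee$ with $\tau_{\M^\vee(\phi)}$ given by polynomial formulas in $t, \theta$ with no denominators, so reducing $\theta$ modulo $\p$ commutes with everything). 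Granting this, $P_\q(\M^\vee(\phi)_R; T) = P(\M(\phibar)^\vee; T)$, which by Theorem~\ref{thm:eulerfactor} equals $\p^{-1} \cdot |\phibar^{\sim}(F)|$. Finally one checks $|\phibar^{\sim}(F)| = |\phiT(\FF_\p)|$, i.e. that forming the Fitting ideal of $\phiT(\FF_\p)$ commutes with reduction modulo $\p$; this is clear from the free resolution~\eqref{eq:resolutionphiT}, whose terms and maps are all defined over $A_R[T]$ and reduce well. Putting these together gives $P_\p(\phi; T) = P_\q(\M^\vee(\phi)_R; T)$, hence part~(1).

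For part~(2), the $\p$-adic $L$-series simply omit the factor at $\p$ (resp. $\q$) from the respective products, so the identity $L_\p(\phi; T) = L_\p(\M^\vee(\phi); T)$ follows immediately from the term-by-term equality established for part~(1), once we observe that the place omitted on the $A$-side corresponds under our bijection to the place omitted on the $R$-side. No convergence issues arise since both sides are already known to be well-defined elements of $K[\![T]\!]$ (resp. to converge $\p$-adically) by the discussion preceding the statement.

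The main obstacle I anticipate is the middle step: carefully justifying that $\M^\vee(\phi)_R \otimes_{A_R} A_F \cong \M(\phibar)^\vee$ compatibly with the semilinear structures, so that the base-change formula for Euler factors applies. This requires unwinding the definitions of $\M^\vee$, of the reduction of an Anderson model, and of $\M$ of the reduced Drinfeld module, and matching the explicit bases and the matrices~\eqref{eq:tauMphidual}; it is the only place where the choice of coordinates and the passage to the dual really have to be tracked. Everything else — the bijection of index sets, the Fitting-ideal base change, and the assembly of local factors into $L$-series — is formal. I would therefore structure the write-up as: (a) identify the index sets; (b) prove the compatibility $\M^\vee(\phi)_R$ reduces to $\M(\phibar)^\vee$; (c) invoke Theorem~\ref{thm:eulerfactor} and the Fitting-ideal base change to get equality of local factors; (d) take products.
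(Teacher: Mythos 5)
Your overall strategy is correct and matches the paper's: identify places of $A$ with places of $R$, compare local Euler factors via Theorem~\ref{thm:eulerfactor}, and take products. You also correctly identify step~(b) — relating the reduction $M_\q := \M^\vee(\phi)_R/\q\,\M^\vee(\phi)_R$ to $\M(\phibar)^\vee$ — as the crux. However, the specific claim you make there is false at places of bad reduction, and this is exactly the subtlety the paper's proof is organized around.

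Concretely, $M_\q$ is free of rank $r$ over $A_{\FF_\q}$ with basis the reductions of $u_0^\vee, \dots, u_{r-1}^\vee$, since $\M^\vee(\phi)_R$ is free of rank $r$. But if $\q$ divides the leading coefficient $g_r$ of $\phi_t$ (or more of the top coefficients), the reduced Drinfeld module $\phibar = \psi$ has rank $s < r$, and hence $\M(\psi)^\vee$ has rank $s < r$. The two cannot be isomorphic. The formula~\eqref{eq:tauMphidual} indeed reduces with no problem, but what it then shows is that $\tau_M$ acts on the span of $u_s^\vee, \ldots, u_{r-1}^\vee$ in a ``shift down'' fashion (because $g_{s+1}, \ldots, g_r$ reduce to zero), so that this submodule $M_{\q,\nil}$ is $\tau_M$-stable with nilpotent action, and $M_\q / M_{\q,\nil} \simeq \M(\psi)^\vee$. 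The Euler factor of $M_\q$ then equals that of the quotient $\M(\psi)^\vee$ because $1 - T^d\tau_M^d$ acts with determinant~$1$ on the nilpotent part; only at this point can one invoke Theorem~\ref{thm:eulerfactor}. You would need to replace your asserted isomorphism by this short exact sequence and the corresponding multiplicativity-of-determinants argument. The rest of your write-up (index-set identification, Fitting-ideal compatibility, passage from local factors to products) is formal and fine.
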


\begin{proof}
We write $M_R := \M^\vee(\phi)$ for simplicity.
Let $\q$ be a place of $R$ and let $\psi : A \to \FF_\q\{\tau\}$ be the reduction of $\phi$ modulo $\q$.
The action of $\tau_M$ on $M_R$, and hence on $M_\q$, is given by the formula~\eqref{eq:tauMphidual}.
Let $M_{\q,\nil}$ be the submodule of $M_\q$ generated by $u_s^\vee, \ldots, u_{r-1}^\vee$ where $s$ is the rank of $\psi$.
We have $M_{\q} / M_{\q,\nil} \simeq \M(\psi)^\vee$. Moreover, $\tau_M$ induces a nilpotent action on $M_{\q,\nil}$. It follows that
\[ \textstyle
P_\q(M_R; T) 
  = \mathrm{det}_{A[\frac 1{\q},T] \otimes \FF_\q}\big(1 - T^d \tau_M^d \,|\, A\big[\frac 1 \q,T\big] \otimes_A \M(\psi)^\vee\big)
  = P\big(\M(\psi)^\vee; T\big).
\]
Applying finally Theorem~\ref{thm:eulerfactor}, we end up with $P_\q(M_R; T) = P_\q(\phi; T)$
and the theorem follows by taking the product over all the relevant places $\q$.
\end{proof}

\end{document}